\documentclass[11pt]{amsart}

\usepackage[T1]{fontenc}
\usepackage[utf8]{inputenc}

\usepackage[margin=3cm]{geometry}
\usepackage{microtype}

\usepackage{amsmath,amssymb,amsfonts,amsthm}
\usepackage{mathtools}
\usepackage{bm}
\numberwithin{equation}{section}

\usepackage[colorlinks=true,linkcolor=blue,citecolor=blue,urlcolor=blue]{hyperref}

\theoremstyle{plain}
\newtheorem{theorem}{Theorem}[section]
\newtheorem{proposition}[theorem]{Proposition}
\newtheorem{lemma}[theorem]{Lemma}
\newtheorem{corollary}[theorem]{Corollary}

\theoremstyle{definition}
\newtheorem{definition}[theorem]{Definition}

\theoremstyle{remark}
\newtheorem{remark}[theorem]{Remark}

\title[Discontinuous fluxes without non-degeneracy]
{Multidimensional scalar conservation laws with discontinuous flux:
well-posedness without non-degeneracy}

\author{Darko Mitrovi\'c}
\address{Faculty of Sciences and Mathematics, University of Montenegro, Podgorica, Montenegro}
\email{darko.mitrovic.mne@gmail.com}

\subjclass[2020]{35L65, 35R05}
\keywords{Scalar conservation laws, discontinuous flux, vanishing viscosity,
entropy solutions, interface conditions, strong traces, \(L^1\)-stability}

\begin{document}

\begin{abstract}
We establish existence, uniqueness, and local \(L^1\)-stability for
multidimensional scalar conservation laws with discontinuous heterogeneous
flux, without imposing a non-degeneracy condition on the physical
space--time flux. The discontinuity set may be a locally finite family of
\(C^2\) hypersurfaces whose intersections and singular points are contained
in a closed set of vanishing \((d-1)\)-dimensional Hausdorff measure.

The main obstacle is the possible failure of multidimensional compactness
when a nontrivial linear combination of the space--time flux components is
constant on an interval of states. We show that, for interface problems, this
obstruction can be reduced to the physical normal flux. Its flat intervals
are collapsed by a normal-flux quotient that preserves the normal flux and
hence the Rankine--Hugoniot relation. On the non-flat region, the time and
tangential components are replaced by auxiliary polynomial functions of the
normal flux, while the physical normal component is left unchanged. The
resulting auxiliary space--time vector satisfies the interval
non-degeneracy required by Panov's compactness theory. This yields strong
compactness and strong one-sided traces of the quotient variables.

The interface admissibility condition is obtained by projecting the
vanishing-viscosity germ associated with the two one-sided physical normal
fluxes. A localized viscous comparison shows that the quotient traces belong
to this maximal \(L^1\)-dissipative germ, and a Young-measure contraction
argument recovers strong convergence of the physical states. Curved
interfaces are treated by flattening, localization, and finite-propagation
patching. The theory is first constructed for \(BV\) initial data and then
extended by continuity to
\(L^1_{\mathrm{loc}}\cap L^\infty\) data taking values in the invariant
interval.
\end{abstract}

\maketitle

\section{Introduction}

We study multidimensional scalar conservation laws with spatially
discontinuous heterogeneous fluxes,
\begin{equation}\label{eq:intro-main}
    \partial_t u+\operatorname{div}_x F(x,u)=0,
    \qquad
    u(0,x)=u_0(x),
    \qquad
    (t,x)\in(0,\infty)\times\mathbb R^d.
\end{equation}
The initial datum and the solution are assumed to take values in the fixed
compact invariant interval
\[
    K=[-a,a].
\]

At every regular point of the discontinuity set, the ordered pair of
one-sided normal fluxes determines a vanishing-viscosity germ in the sense of
the \(L^1\)-dissipative germ theory of \cite{AKR,AM}. We call a solution
admissible if, locally near every regular interface point, it belongs to the
solution class associated with this germ. The precise formulation, including
the use of normal-flux quotient variables when the normal flux has flat
intervals, is given in Section~\ref{sec:general-interface}.

Our main result is the following.

\begin{theorem}[Main theorem]
\label{thm:intro-main}
Let the discontinuity set of \(F\) be decomposed as
\[
    \Gamma
    =
    \Gamma_{\mathrm{reg}}
    \mathbin{\dot\cup}
    \Gamma_p,
\]
where \(\Gamma_p\) is closed and
\[
    \mathcal H^{d-1}(\Gamma_p)=0.
\]
Assume that every \(x_0\in\Gamma_{\mathrm{reg}}\) has a neighbourhood
\(U_{x_0}\) in which, after a permutation of the spatial coordinates,
\[
    \Gamma\cap U_{x_0}
    =
    \{x_1=\zeta_{x_0}(x')\}\cap U_{x_0},
    \qquad
    \zeta_{x_0}\in C^2,
\]
and the flux admits the two-branch representation
\begin{equation}\label{eq:intro-local-flux}
\begin{aligned}
    F(x,\lambda)
    &=
    H\bigl(\zeta_{x_0}(x')-x_1\bigr)
    f_{x_0}^-(x,\lambda)+
    H\bigl(x_1-\zeta_{x_0}(x')\bigr)
    f_{x_0}^+(x,\lambda),
    \ \
    (x,\lambda)\in U_{x_0}\times K,
\end{aligned}
\end{equation}
where
\[
    f_{x_0}^\pm
    \in
    C^2(U_{x_0}\times K;\mathbb R^d).
\]
Assume moreover that
\[
    f_{x_0}^\pm(x,-a)
    =
    f_{x_0}^\pm(x,a)
    =
    0,
    \qquad x\in U_{x_0}.
\]

Then, for every
\[
    u_0\in BV(\mathbb R^d;K),
\]
problem \eqref{eq:intro-main} admits a unique solution in the admissibility
class associated with the vanishing-viscosity germs of the corresponding
one-sided normal fluxes, in the sense of \cite{AKR,AM}.

Moreover, if \(u\) and \(v\) are admissible solutions with initial data
\(u_0\) and \(v_0\), respectively, then for every \(T,R>0\) there exist
\(\overline R>R\) and \(C_{T,R}>0\) such that
\begin{equation}\label{eq:intro-stability}
    \operatorname*{ess\,sup}_{t\in(0,T)}
    \int_{B_R}|u(t,x)-v(t,x)|\,dx
    \leq
    C_{T,R}
    \int_{B_{\overline R}}
        |u_0(x)-v_0(x)|\,dx.
\end{equation}
Consequently, the solution map extends uniquely by local
\(L^1\)-continuity to all initial data
\[
    u_0
    \in
    L^1_{\mathrm{loc}}(\mathbb R^d)
    \cap L^\infty(\mathbb R^d),
    \qquad
    u_0(x)\in K
    \quad\text{for a.e. }x.
\]
\end{theorem}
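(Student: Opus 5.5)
The plan is to follow the architecture announced in the abstract: vanishing viscosity for existence, germ theory for admissibility and uniqueness, and a density argument for general data. First, for \(u_0\in BV(\mathbb R^d;K)\), I would mollify the flux across \(\Gamma\), writing \(F_\varepsilon\) for the smoothed flux. I would then solve \(\partial_t u_\varepsilon+\operatorname{div}_x F_\varepsilon(x,u_\varepsilon)=\varepsilon\Delta u_\varepsilon\). The assumption \(f^\pm_{x_0}(x,\pm a)=0\) makes \(\pm a\) stationary solutions, so the maximum principle keeps \(u_\varepsilon\in K\). Away from \(\Gamma\) the flux is \(C^2\), so standard Kružkov-type estimates give local compactness there. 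Near a regular point \(x_0\) I would flatten the interface by \(x_1\mapsto x_1-\zeta_{x_0}(x')\), which uses \(\zeta_{x_0}\in C^2\). This reduces the problem to a flat interface \(\{x_1=0\}\), where the normal fluxes are \(f^\pm_{x_0,1}\), up to lower-order terms from the change of variables.

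Compactness near the interface is the main obstacle, since no non-degeneracy is assumed. I would introduce the normal-flux quotient variable \(w_\varepsilon=\Phi^\pm(u_\varepsilon)\), which collapses the intervals on which the normal flux \(f^\pm_1(x,\cdot)\) is flat. On the remaining region I would replace the time and tangential components by polynomials \(P_j(f_1^\pm)\) of the normal flux. The aim is a kinetic/H-measure formulation for \(w_\varepsilon\) in which the auxiliary space--time vector \((P_0(\lambda),\lambda,P_2(\lambda),\dots)\) is not constant along any nontrivial direction on any interval. This is exactly the hypothesis of Panov's compactness theorem. Applying it gives strong \(L^1_{\mathrm{loc}}\) compactness of \(w_\varepsilon\) and strong one-sided traces of the limit on \(\{x_1=0\}\). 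Choosing the polynomials so that the entropy-production measures remain locally bounded is the delicate point. I would take generic polynomials and check linear independence on non-flat intervals.

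Next I would identify the interface condition. I would localize the viscous equation against a cutoff concentrated near \(\{x_1=0\}\) and compare \(u_\varepsilon\) with the viscous profiles connecting states \((c^-,c^+)\) of the germ \(\mathcal G\) of \((f^-_1,f^+_1)\) in the sense of \cite{AKR,AM}. Passing to the limit then shows that the pair of quotient traces lies in the projection of \(\mathcal G\). Since \(\Phi^\pm\) preserves the normal flux, the Rankine--Hugoniot condition survives. The limiting Young measure \(\nu_{t,x}\) of \(u_\varepsilon\) satisfies a measure-valued entropy inequality away from \(\Gamma\), with \(BV\) data in time. Using the maximal \(L^1\)-dissipativity of the germ, a Young-measure contraction of the form \(\int\!\!\int|\lambda-\mu|\,d\nu\,d\nu\) then forces \(\nu\) to be a Dirac mass. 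This recovers strong convergence of \(u_\varepsilon\) itself.

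For uniqueness and \eqref{eq:intro-stability}, I would apply Kružkov doubling with cutoff weights on cones of slope \(L=\sup|\partial_\lambda F|\). In the interior this is standard. At regular interface points, the \(L^1\)-dissipativity of the germ and the strong traces make the boundary term nonpositive. On \(\Gamma_p\), the condition \(\mathcal H^{d-1}(\Gamma_p)=0\) lets me cut out \(\Gamma_p\) with capacity-type cutoffs whose gradient contributions vanish. Curved interfaces are patched by a partition of unity together with finite propagation speed, which produces \(\overline R=R+LT\) and the constant \(C_{T,R}\). Finally, \(BV(\mathbb R^d;K)\) is dense in \(L^1_{\mathrm{loc}}\cap L^\infty\) with values in \(K\). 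By \eqref{eq:intro-stability} the solution map is uniformly continuous in the local \(L^1\) topology, so it extends uniquely. The limits remain admissible because entropy inequalities and the trace conditions in the germ formulation are stable under \(L^1_{\mathrm{loc}}\) convergence.
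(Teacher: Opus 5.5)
\paragraph{The replacement step.} Your outline follows the paper's architecture, but the step you flag as ``the delicate point'' is where the proof actually lives, and choosing generic polynomials does not resolve it. Panov's theorem needs two things for the replaced vector $\mathcal N_\pm=(N_\pm^2,N_\pm,N_\pm^3,\ldots,N_\pm^{d+1})$.

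\begin{itemize}
\item Interval non-degeneracy on the non-flat region. This is elementary: by Lemma~\ref{lem:polynomial-replacement-nondeg}, $a_N\cdot\xi=D_N\,P_\xi(N)$ with $P_\xi\not\equiv0$.
\item $H^{-1}_{\mathrm{loc}}$-precompactness of $\operatorname{div}_{t,x}\mathcal N_\pm(x,s_{\alpha,\beta}(u^\varepsilon))$, together with locally bounded entropy productions up to $\Gamma$ for the trace theorem.
\end{itemize}

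The second item is the problem. The viscous Kruzhkov identity controls only the physical divergence. The replaced divergence differs from it by
\[
\partial_t\bigl[N_\pm(x,v^\varepsilon)^2-v^\varepsilon\bigr]
+\sum_{j=2}^d\partial_{x_j}\bigl[N_\pm(x,v^\varepsilon)^{j+1}-F_{\pm,j}(x,v^\varepsilon)\bigr],
\qquad v^\varepsilon=s_{\alpha,\beta}(u^\varepsilon),
\]
and no choice of polynomials makes this term compact.

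The missing ingredient is the uniform local measure bound \eqref{eq:flat-tangential-bv-estimates} on $\partial_tu^\varepsilon$ and on $\partial_{x_j}u^\varepsilon$, $j\ge2$. It comes from time and tangential difference quotients, and this is where $u_0\in BV$ is used. With it, the displayed term is a locally bounded measure for any $C^1$ replacement. That is exactly why the time and tangential components may be replaced. The normal component carries the viscous layer and has no $BV$ bound, so it must stay physical. Without these estimates, neither compactness nor strong traces of the quotient variables follows.

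\paragraph{Further corrections.}

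\begin{itemize}
\item \emph{The normal flux after flattening.} It is the co-normal flux $f_1^\pm-\sum_{j\ge2}\partial_j\zeta\,f_j^\pm$, which equals $\sqrt{1+|\nabla\zeta|^2}\,f^\pm\cdot\nu$ on $\Gamma$. It differs from $f_1^\pm$ at leading order, not by lower-order terms. The non-flat region, quotient, replacement and germ must all be built from it. For example, if $f_1\equiv0$, a quotient of $f_1$ collapses all of $K$, although the true normal flux may be non-degenerate.
\item \emph{Mollifying the Heaviside function is not harmless.} Comparing with the sharp-interface profiles $R^\varepsilon$ produces the term $\operatorname{sgn}(u_\varepsilon-R^\varepsilon)\,\partial_{x_1}(H_\delta-H)\,(g-f)(x,R^\varepsilon)$. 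This is not negligible unless $\delta=o(\varepsilon)$. Otherwise the regularized profiles are different objects, and their end states need not form $\mathcal G_{\mathrm{VV}}^{\mathrm{phys}}$. The paper avoids this by keeping the sharp flux with the transmission conditions \eqref{eq:flat-viscous-transmission}.
\item \emph{Global versus chartwise viscosity.} The paper runs the viscous approximation chartwise, with the Euclidean Laplacian in flattened variables. Your global $\varepsilon\Delta_x$ becomes a variable-coefficient operator after flattening, which complicates the tangential estimates above.
\item \emph{Non-$BV$ data.} Your claim that limits of $BV$-data solutions keep strong traces and the germ condition is unjustified. Strong traces do not survive $L^1_{\mathrm{loc}}$ limits, and the trace mechanism relied on $BV$ bounds that are unavailable for general data. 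The theorem only needs extension by uniform continuity, which is how the paper defines the solution for non-$BV$ data.
\end{itemize}
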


Theorem~\ref{thm:intro-main} closes the principal gap left by the existing
multidimensional theory: it provides a well-posedness framework based on the
vanishing-viscosity germ for discontinuity sets that are locally smooth graphs
outside an \(\mathcal H^{d-1}\)-negligible set, without any
genuine-nonlinearity or multidimensional non-degeneracy assumption on the
physical flux.

The proof combines a new compactness mechanism for the flat-interface problem
with the geometric localization procedure developed in
\cite{Mitrovic2025Nondegenerate}. We first consider the interface
\[
    \Gamma=\{x_1=0\}
\]
and \(BV\) initial data. The pure-viscosity approximation provides uniform
local Radon-measure bounds for the time derivative and the tangential spatial
derivatives, but not for the normal derivative, where the viscous interface
layer is concentrated. These estimates allow us to replace the time and
tangential components of the auxiliary space--time flux by polynomial
functions of the physical normal flux \(N\), while leaving the normal
component unchanged. The resulting vector
\[
    \mathcal N_N
    =
    \bigl(
        N^2,N,N^3,\ldots,N^{d+1}
    \bigr)
\]
satisfies Panov's non-degeneracy condition on the regions where
\(\partial_\lambda N\neq0\).

Flat intervals of the normal flux are collapsed by the quotient
\[
    \Pi_N(x,\lambda)
    =
    \int_{-a}^{\lambda}
        |\partial_sN(x,s)|^2\,ds.
\]
Localized compactness and trace results yield strong convergence and strong
one-sided traces of the quotient variables. The vanishing-viscosity germ of
the two physical normal fluxes is then projected through the corresponding
quotient maps, and the viscous approximation is shown to select this projected
germ. Its \(L^1\)-dissipativity, combined with a Young-measure contraction
argument, gives strong convergence of the physical states and uniqueness.
For a curved interface, we flatten the hypersurface locally and apply the
same construction to the transformed co-normal flux. The resulting local
solutions are extended, compared in smaller space--time cones, and patched
together by the finite-propagation argument of
\cite{Mitrovic2025Nondegenerate}; the negligible interaction set is finally
removed by a \(W^{1,1}\)-cutoff argument.

Equations such as \eqref{eq:intro-main} arise naturally in models where the
medium, geometry, or constitutive law changes abruptly in space. Typical
examples include flow in heterogeneous porous media, sedimentation and
clarification models, traffic flow with discontinuous road conditions, locally
constrained flows, network-type models, and conservation laws in composite
materials. In such problems the flux is not a smooth function of the spatial
variable, but changes across interfaces separating different regimes. The
discontinuity of the flux is therefore not a technical artifact: it encodes a
physical change of medium, capacity, permeability, or admissible flow.

In several space dimensions, the existence theory for discontinuous-flux
problems is commonly based on compactness obtained through velocity averaging (see e.g. \cite{AleksicMitrovic2009,Mitrovic2025Nondegenerate,Panov_arma}).
This approach requires a non-degeneracy condition on the full space--time
transport field. More precisely, for every nonzero vector
\[
    \xi=(\xi_0,\xi_1,\ldots,\xi_d)\in\mathbb R^{d+1},
\]
the function
\[
    \lambda\longmapsto
    \xi_0\lambda+\sum_{j=1}^d\xi_jF_j(x,\lambda)
\]
is required not to be constant on any non-degenerate interval of the state
variable. This condition may fail because of the time component, a tangential
flux component, or a nontrivial linear combination of several components, and
therefore excludes many physically natural fluxes.

The purpose of the present paper is to overcome this restriction for
discontinuous-flux interface problems. At an interface, the
Rankine--Hugoniot relation involves only the normal component of the physical
flux. This allows us to keep the normal flux unchanged, while replacing the
time and tangential components by auxiliary components used only in the
velocity-averaging argument. These components are chosen so that the required
non-degeneracy is restored on the non-flat part of the normal flux. Flat
intervals of the normal flux are not excluded. Instead, the corresponding
states are identified through a quotient projection which preserves the
physical normal flux and hence the Rankine--Hugoniot relation.

The problem has a long history. Early scalar works with explicitly
discontinuous flux functions include the one-dimensional Cauchy-problem
analysis of \cite{GimseRisebro1992} and Diehl's sedimentation models
\cite{Diehl1995,DiehlCMP1996}. These works helped establish scalar
conservation laws with discontinuous flux as a separate object of study.
Physical models, introductory overviews, and network or constrained-flow
motivations are treated, for instance, in
\cite{AndreianovDiscConstraint2010,Buerger2008,ColomboGoatin2007,GNPT2007}.

Several complementary approaches have since developed. One important line is
based on adapted entropy inequalities, interface admissibility conditions,
germs, and \(L^1\)-dissipative solvers. This includes
\cite{AdimurthiJHDE2005,AdimurthiNHM2007,AKR-NHM,AKR,AM,AudussePerthame2005}.
In particular, the multidimensional uniqueness result from \cite{AM} applies
to special interface configurations which are mutually disjoint and locally
representable as graphs.

Kinetic formulations and process-solution techniques provide another important
approach. The kinetic framework for scalar conservation laws goes back, in
this context, to \cite{LPT94}, while discontinuous-flux problems treated by
kinetic or process-solution methods include
\cite{BachmannVovelle2006,CrastaDCDP2015,CrastaDCDPG2016}.

Compactness, non-degeneracy, and trace methods form a further class of
results. Compactness mechanisms related to non-degeneracy and velocity
averaging appear in
\cite{AleksicMitrovic2009,KarlsenRascleTadmor2007,
Mitrovic2025Nondegenerate,Panov_arma,panov99}.
Strong trace results for generalized solutions and quasi-solutions are
developed in
\cite{pan_traces,PanovTraces2007}.
Treatment of entropy solutions for discontinuous fluxes, including an
Audusse--Perthame type admissibility structure in a broader setting, is given
in \cite{KM-MH,Panov2009,PiccoliTournus2018}.

Approximation and numerical schemes constitute another strand of the theory.
Godunov-type and finite-volume approximations, as well as vanishing-viscosity
selection mechanisms, are treated in
\cite{AdimurthiJNA2004,AKR-NHM,AndreianovDiscConstraint2010,GhoshalJanaTowers2020}.

Finally, \(BV\), total variation, and \(BV\)-spatial-flux assumptions appear
in special structural situations. The classical background is \cite{Volpert1967}.
For discontinuous-flux results involving total variation bounds,
\(BV\)-type hypotheses, or \(BV\)-based constructions, see
\cite{AdimurthiGhoshalGowda2011,GhoshalJanaTowers2020,PiccoliTournus2018}.
When \(BV\)-bounds are not available, fractional or higher regularity may still
persist under additional structural assumptions; see
\cite{GhoshalJuncaParmar2024NARWA,GhoshalJuncaParmar2024SIMA,EKM2}.
Related recent work on viscosity approximation and non-aligned or non-crossing
discontinuous-flux configurations includes
\cite{KarlsenMitrovicNedeljkov2025,Zajmovic2025}, where the appearance of
singular measures in the vanishing-viscosity limit is observed.

We next explain how the present construction combines and modifies several
ideas from the existing theory. For spatially homogeneous scalar conservation
laws
\[
    \partial_t u+\operatorname{div}_x f(u)=0,
\]
Kruzhkov's theory gives existence, uniqueness, and \(L^1\)-stability of entropy
solutions under the usual regularity assumptions on the flux \cite{Kru}. For
heterogeneous fluxes \(F=F(x,u)\), additional regularity in \(x\) is needed in
order to make the entropy source terms meaningful. The discontinuous-flux case
is fundamentally different, because the one-sided Kruzhkov inequalities away
from the discontinuity set do not determine the admissible interface
connection.

In one space dimension, this missing interface condition has been studied
through adapted entropies, interface connections, optimal entropy solutions,
vanishing-viscosity selection, and \(L^1\)-dissipative germs; see, for example,
\cite{AdimurthiJHDE2005,AKR-NHM,AKR,AudussePerthame2005}. The flat-part
reduction developed in \cite{Mitrovic2011} is particularly close to the
present construction. There, values of the solution lying on intervals where
the flux is constant can be replaced without changing the physical flux. The
present paper extends this idea to multidimensional discontinuous fluxes. The
extension is not direct: in an interface problem only the normal flux is
physically fixed by the Rankine--Hugoniot relation, while the time and
tangential flux components may be replaced in the compactness argument.

Already in the one-dimensional single-interface problem, the absence of a
non-degeneracy condition creates a genuine compactness difficulty. Consider,
for instance,
\begin{equation}\label{eq:intro-one-dimensional-interface}
    \partial_t u
    +
    \partial_x
    \bigl(
        f(x,u)H(-x)+g(x,u)H(x)
    \bigr)
    =
    0,
    \qquad
    u(0,x)=u_0(x),
\end{equation}
where
\[
    u_0\in BV(\mathbb R),
    \qquad
    -a\leq u_0\leq a,
\]
and
\[
    f(x,\pm a)=g(x,\pm a)=0 .
\]
The last condition yields the invariant region
\[
    -a\leq u\leq a .
\]

The purpose of \cite{Mitrovic2011} was precisely to remove the
non-degeneracy assumption in this one-dimensional setting. The obstruction
occurs on intervals on which one of the flux branches is constant. On such an
interval, the physical flux does not distinguish between different values of
the state variable. These values may therefore be replaced by a single
representative, for example by projecting them onto the lower endpoint of the
corresponding flat interval; see
\cite[Lemma~2.3, formula~(27)]{Mitrovic2011}.

After the flat parts are collapsed, strong compactness can be recovered for
the projected representatives by the precompactness theory of
\cite{Panov_arma}. The corresponding strong one-sided traces are obtained
from the trace theory for generalized solutions and quasi-solutions developed
in \cite{pan_traces,PanovTraces2007}. Thus, the one-dimensional theory of
\cite{Mitrovic2011} already permits flat intervals of the physical flux and
does not impose the usual non-degeneracy condition.

The essential limitation of \cite{Mitrovic2011} lies elsewhere. Its
uniqueness argument uses a transformation of the unknown together with a
crossing-type condition, in the spirit of \cite{KRT}. This construction is
essentially adapted to a single discontinuity point and does not provide a
local interface condition that can be assigned independently to each member
of a multidimensional family of discontinuity interfaces.

The later theory of \(L^1\)-dissipative solvers and interface germs
\cite{AKR}, together with its multidimensional extension to suitable
graph-type interface configurations in \cite{AM}, provides the appropriate
local uniqueness mechanism. The present paper combines this germ formulation
with the flat-part projection introduced in \cite{Mitrovic2011}. The
projection removes the non-degeneracy obstruction, while the germ provides a
local admissibility condition that can be imposed at each regular point of a
multidimensional discontinuity interface. These ingredients alone, however,
do not yield existence for more general interface geometries. New localization
and approximation ideas are needed even for a single closed smooth interface,
such as a sphere, which is only locally, but not globally, representable as a
graph.

The existence problem for general multidimensional interface geometries was
only recently addressed in \cite{Mitrovic2025Nondegenerate}. The main new idea
there was a localized vanishing-viscosity construction. Near each regular
point of the discontinuity set, the interface is flattened and the
vanishing-viscosity selection is analyzed in the resulting local
flat-interface problem. The local solutions are then shown to be compatible
on overlapping charts and are patched together by means of local
\(L^1\)-stability and finite propagation. This made it possible to treat
smooth interfaces which are only locally representable as graphs, including
closed hypersurfaces such as spheres, as well as more general geometrically
regular interface configurations. The compactness argument in that work,
however, still required a non-degeneracy condition on the relevant
multidimensional space--time flux.

The present paper removes this remaining restriction. It combines the
flat-part projection introduced in \cite{Mitrovic2011}, the
\(L^1\)-dissipative interface-germ theory of \cite{AKR,AM}, and the localized
vanishing-viscosity construction of
\cite{Mitrovic2025Nondegenerate}. After the interface is flattened, the
physical co-normal flux is kept unchanged, while the time and tangential
components are replaced by auxiliary components used only in the compactness
argument. The velocity-averaging non-degeneracy condition is therefore imposed
only on these auxiliary space--time vectors and only on the open non-flat
part of the co-normal flux. Flat intervals of the co-normal flux are allowed
and are collapsed by the associated quotient projection.

The resulting theory applies to a general smooth discontinuity interface and,
more generally, to a locally finite family of smooth interfaces whose mutual
intersections are confined to a set of vanishing
\((d-1)\)-dimensional Hausdorff measure. Thus the method covers geometrically
regular interface configurations for which the admissibility condition can be
formulated and selected locally at almost every point of the discontinuity
set.

We now describe the construction in the model flat-interface case
\begin{equation}\label{eq:intro-flat-flux}
    F(x,\lambda)
    =
    H(-x_1)f(x,\lambda)+H(x_1)g(x,\lambda),
    \qquad
    \Gamma=\{x_1=0\},
\end{equation}
where \(f\) and \(g\) denote the left and right heterogeneous flux branches. We
impose the invariant-region condition
\begin{equation}\label{eq:intro-invariant}
    f(x,\pm a)=0,
    \qquad
    g(x,\pm a)=0,
\end{equation}
which yields the maximum principle
\[
    -a\leq u\leq a .
\]

Across the interface, the conservation law sees only the normal flux. Hence any
weak solution must satisfy a Rankine--Hugoniot relation involving the one-sided
normal components
\[
    f_1(0^-,x',\lambda),
    \qquad
    g_1(0^+,x',\lambda).
\]
This relation alone, however, does not determine which trace pair is
admissible. The usual Kruzhkov inequalities on the two sides of the interface
are therefore not sufficient for uniqueness. A separate interface
admissibility condition is needed. In this paper the admissibility condition is
derived from a localized vanishing-viscosity procedure.

In the model flat-interface case
\[
    \Gamma=\{x_1=0\},
\]
the physically relevant fluxes at the interface are the one-sided normal
components
\[
    \lambda\longmapsto f_1(x,\lambda)
    \quad\text{and}\quad
    \lambda\longmapsto g_1(x,\lambda).
\]
Accordingly, the compactness construction developed below is organized around
the non-flat regions of these normal fluxes. The time and tangential components
may be replaced by auxiliary components, whereas the normal components are
kept unchanged in order to preserve the Rankine--Hugoniot relation.

The replacement is asymmetric. The time and tangential components are used only
to obtain compactness and may therefore be modified. The normal component,
however, is constrained by conservation across the interface and by the
Rankine--Hugoniot relation. It is therefore kept equal to the physical normal
flux throughout the argument.

The viscous approximation provides local bounds on
\[
    \partial_t u^\varepsilon
    \qquad\text{and}\qquad
    \partial_{x_j}u^\varepsilon,\quad j=2,\ldots,d,
\]
but not on the normal derivative \(\partial_{x_1}u^\varepsilon\). The missing
normal estimate is precisely where the interface layer is concentrated.
Consequently, the time and tangential components of the space--time flux may
be replaced by locally Lipschitz functions of the unknown, while the normal
component is kept equal to the physical normal flux.

Thus we introduce auxiliary space--time vectors of the form
\[
    \mathcal F_-(x,\lambda)
    =
    \bigl(
        h_{-,0}(x,\lambda),
        f_1(x,\lambda),
        h_{-,2}(x,\lambda),
        \ldots,
        h_{-,d}(x,\lambda)
    \bigr)
\]
on the left and
\[
    \mathcal F_+(x,\lambda)
    =
    \bigl(
        h_{+,0}(x,\lambda),
        g_1(x,\lambda),
        h_{+,2}(x,\lambda),
        \ldots,
        h_{+,d}(x,\lambda)
    \bigr)
\]
on the right. These vectors are auxiliary compactness fluxes; they are not the
physical fluxes of the conservation law. The auxiliary components are chosen so
that the replaced vectors are non-degenerate on every interval on which the
corresponding normal flux is not constant. This non-degeneracy is not imposed
on the physical flux; it is only a compactness device made possible by the time
and tangential estimates.

It remains to treat intervals on which the normal flux itself is flat.  Such
intervals are not excluded.  Let
\[
    N(x,\lambda)
\]
denote the normal flux, for instance \(N=f_1\) on the left or \(N=g_1\) on
the right, and assume that \(N\) is \(C^1\) in \((x,\lambda)\).  Set
\[
    D_N(x,\lambda):=\partial_\lambda N(x,\lambda)
\]
and define
\begin{equation}
\label{eq:intro-nonflat-region}
    A_N
    :=
    \bigl\{
        (x,\lambda)\in\Omega\times K:
        D_N(x,\lambda)\neq0
    \bigr\}.
\end{equation}
Since \(D_N\) is continuous, \(A_N\) is open.  On every interval compactly
contained in \((A_N)_x\), the derivative \(D_N(x,\cdot)\) has a fixed
nonzero sign, and hence \(N(x,\cdot)\) is strictly monotone.

On \(A_N\), we replace the time and tangential components of the auxiliary
space--time flux by suitable polynomial functions of \(N\), while keeping
the physical normal component unchanged.  The resulting kinetic velocity
satisfies the qualitative non-degeneracy condition required by Panov's
compactness theory.  Indeed, for every nonzero direction \(\xi\), its scalar
product with \(\xi\) cannot vanish identically on a non-degenerate interval
compactly contained in \((A_N)_x\).

The flat intervals are collapsed by the normal-flux quotient
\begin{equation}
\label{eq:intro-normal-flux-quotient}
    \Pi_N(x,\lambda)
    :=
    \int_{-a}^{\lambda}
        |D_N(x,s)|^2\,ds.
\end{equation}
For \(\lambda<\mu\),
\[
    \Pi_N(x,\mu)-\Pi_N(x,\lambda)
    =
    \int_\lambda^\mu
        |D_N(x,s)|^2\,ds.
\]
Since \(D_N(x,\cdot)\) is continuous,
\[
    \Pi_N(x,\lambda)=\Pi_N(x,\mu)
\]
if and only if
\[
    D_N(x,s)=0
    \qquad
    \text{for every }s\in[\lambda,\mu],
\]
or equivalently, if and only if \(N(x,\cdot)\) is constant on
\([\lambda,\mu]\).  Thus \(\Pi_N\) collapses precisely the flat intervals of
the normal flux.  No parametrization of their endpoints and no additional
structural assumption on the flat parts of \(N\) are required.

Consequently, the normal flux depends on the state variable only through the
quotient.  More precisely, there exists a unique continuous reduced normal
flux
\[
    \widehat N(x,\cdot):
    \Pi_N(x,K)\to\mathbb R
\]
such that
\[
    N(x,\lambda)
    =
    \widehat N\bigl(x,\Pi_N(x,\lambda)\bigr).
\]
The quotient therefore identifies only states which are indistinguishable
through the normal flux and preserves the Rankine--Hugoniot relation.

Moreover, the quotient representative is itself a localized kinetic average:
\[
\begin{aligned}
    \Pi_N(x,u^\varepsilon(x))
    &=
    \frac12
    \int_K
        \operatorname{sgn}(u^\varepsilon(x)-\lambda)
        |D_N(x,\lambda)|^2\,d\lambda
\\
    &\quad+
    \frac12
    \int_K
        |D_N(x,\lambda)|^2\,d\lambda.
\end{aligned}
\]
The first term is a kinetic average with a continuous weight which vanishes
outside \(A_N\), while the second term is independent of
\(\varepsilon\).  Localized Panov compactness therefore yields strong
compactness of
\[
    \Pi_N(x,u^\varepsilon).
\]
If \(N\) is \(C^1\) up to the interface, the same weight has a classical
one-sided boundary trace, and Panov's trace theory gives strong one-sided
traces of the quotient representatives.

The quotient is associated only with the normal physical flux.  No analogous
factorization condition is imposed on the tangential components of the
physical flux.  Those components remain in the entropy-process or
Young-measure formulation and are identified after the process solution is
shown to be concentrated at a single state.

The quotient variables also provide the natural formulation of the interface
admissibility condition.  We denote the one-sided physical normal fluxes by
\[
    \Phi_-(x',\lambda):=f_1(0^-,x',\lambda),
    \qquad
    \Phi_+(x',\lambda):=g_1(0^+,x',\lambda),
\]
and define the corresponding interface quotients by
\[
    P^-_\Gamma(x',\lambda)
    :=
    \int_{-a}^{\lambda}
        \left|
            \partial_s\Phi_-(x',s)
        \right|^2\,ds,
\]
and
\[
    P^+_\Gamma(x',\lambda)
    :=
    \int_{-a}^{\lambda}
        \left|
            \partial_s\Phi_+(x',s)
        \right|^2\,ds.
\]
Each boundary quotient collapses exactly the flat intervals of the
corresponding one-sided normal flux.  Hence there exist reduced normal fluxes
\[
    \widehat\Phi_-(x',\cdot):
    P^-_\Gamma(x',K)\to\mathbb R,
    \qquad
    \widehat\Phi_+(x',\cdot):
    P^+_\Gamma(x',K)\to\mathbb R,
\]
such that
\[
    \widehat\Phi_-
    \bigl(x',P^-_\Gamma(x',\lambda)\bigr)
    =
    \Phi_-(x',\lambda),
\]
and
\[
    \widehat\Phi_+
    \bigl(x',P^+_\Gamma(x',\lambda)\bigr)
    =
    \Phi_+(x',\lambda).
\]

The vanishing-viscosity germ associated with the two physical normal fluxes is
then projected through \(P^-_\Gamma\) and \(P^+_\Gamma\).  This gives a
maximal \(L^1\)-dissipative interface germ
\[
    \mathcal G_{\mathrm{vv}}(x')
    \subset
    P^-_\Gamma(x',K)\times P^+_\Gamma(x',K)
\]
for the reduced normal fluxes.  Every pair
\[
    (r^-,r^+)\in\mathcal G_{\mathrm{vv}}(x')
\]
satisfies the reduced Rankine--Hugoniot relation
\[
    \widehat\Phi_-(x',r^-)
    =
    \widehat\Phi_+(x',r^+).
\]

A solution is admissible if the quotient representatives
\[
    \Pi^-(x,u(t,x))
    \qquad\text{and}\qquad
    \Pi^+(x,u(t,x))
\]
admit strong one-sided traces
\[
    \widetilde u^-(t,x'),
    \qquad
    \widetilde u^+(t,x'),
\]
and if
\[
    \bigl(
        \widetilde u^-(t,x'),
        \widetilde u^+(t,x')
    \bigr)
    \in\mathcal G_{\mathrm{vv}}(x')
    \qquad
    \text{for a.e. }(t,x').
\]
The localized vanishing-viscosity approximation is shown to select precisely
this condition.  The \(L^1\)-dissipativity of the germ gives the correct sign
of the interface contribution in the Kato inequality and therefore yields
uniqueness and local \(L^1\)-stability.

The same construction applies to curved interfaces after flattening. Suppose
that, locally, the interface is given by
\[
    x_1=\zeta(x').
\]
We introduce the change of variables
\[
    y_1=x_1-\zeta(x'),
    \qquad
    y'=x'.
\]
Its Jacobian is equal to one, so the divergence structure is preserved.
However, the normal component of the transformed flux is not the original
coordinate component \(f_1\), but the co-normal flux
\[
    \widehat f_1^{\,\pm}(y,\lambda)
    =
    f_1^{\,\pm}(x(y),\lambda)
    -
    \sum_{j=2}^d
    \partial_{y_j}\zeta(y')\,f_j^{\,\pm}(x(y),\lambda).
\]
All non-flat regions, quotient projections, replacement vectors, and
Rankine--Hugoniot relations are therefore defined with respect to this
transformed normal flux. In particular, the non-flat region and the
corresponding quotient assumptions are defined in terms of
\[
    \lambda\longmapsto \widehat f_1^{\,\pm}(y,\lambda),
\]
and not in terms of the original coordinate component
\(f_1^{\,\pm}\).

We begin with \(BV\) initial data. In this class we prove existence,
uniqueness, and local \(L^1\)-stability for fluxes whose discontinuity set is a
smooth interface. By localization, the argument extends to a locally finite
family of smooth interfaces, allowed to meet on a closed set \(\Gamma_p\)
satisfying
\[
    \mathcal H^{d-1}(\Gamma_p)=0.
\]
This negligible interaction set produces no additional entropy or Kato
contribution, since it can be removed by cutoff functions whose gradients have
arbitrarily small \(L^1\)-norm on compact sets.

The \(BV\) assumption is used in the construction of approximate solutions and
in the trace analysis. It provides the uniform time and tangential
\(BV\)-bounds needed to obtain compactness of the quotient representatives and
their strong one-sided traces. The final well-posedness class, however, is
larger. Once existence, uniqueness, and local \(L^1\)-stability have been
obtained for \(BV\) data, the solution map extends by continuity to non-\(BV\)
data.

Namely, for
\[
    u_0\in L^1_{\mathrm{loc}}(\mathbb R^d)\cap L^\infty(\mathbb R^d),
    \qquad
    u_0(x)\in K
    \quad\text{for a.e. }x,
\]
we choose \(u_0^n\in BV(\mathbb R^d;K)\) such that
\[
    u_0^n\to u_0
    \qquad
    \text{in }L^1_{\mathrm{loc}}(\mathbb R^d).
\]
If \(u^n\) denotes the admissible solution corresponding to \(u_0^n\), the
local stability estimate implies that \((u^n)\) is Cauchy in
\[
    L^\infty_{\mathrm{loc}}
    \bigl((0,\infty);L^1_{\mathrm{loc}}(\mathbb R^d)\bigr).
\]
The limit is independent of the approximating \(BV\) sequence. We therefore
define the solution for non-\(BV\) data by closure as this strong
\(L^1_{\mathrm{loc}}\)-limit. Thus the \(BV\) assumption is a construction
and trace hypothesis, not a restriction on the final
\(L^1_{\mathrm{loc}}\)-stable class.

In this sense, the present paper gives a well-posedness theory for
discontinuous-flux problems in the geometrically regular interface setting.
It does not attempt to cover arbitrary \(BV_x\)-rough discontinuity patterns.
Our focus is instead on configurations for which the interface admissibility
condition can be formulated locally, is geometrically intrinsic, and is
selected by vanishing viscosity.

The paper is organized as follows. Section~2 develops the localized compactness
and trace tools needed for the quotient construction. We introduce
quasi-solutions associated with replaced flux vectors and their kinetic
formulation, and prove a local qualitative velocity-averaging result for
sign-type kinetic functions under an interval non-degeneracy condition. We
then derive compactness results for bounded kinetic averages and variable
truncations, together with a criterion for upgrading weak traces to strong
traces. Finally, for a normal flux \(N\), we introduce the open non-flat region
\(A_N\), construct a polynomial replacement satisfying the required
non-degeneracy condition on \(A_N\), and define the normal-flux quotient
\(\Pi_N\) which collapses the complementary flat components.

Section~3 treats the flat-interface problem. Starting with \(BV\) initial data,
we establish the time and tangential estimates needed for the replaced kinetic
formulation and prove strong compactness and strong one-sided traces of the
quotient representatives. We then define the projected vanishing-viscosity
germ, prove its \(L^1\)-dissipativity and maximality, and show that the viscous
approximation selects the corresponding interface condition. This yields
existence, uniqueness, and local \(L^1\)-stability. The solution map is
subsequently extended by continuity from \(BV\) data to general
\(L^1_{\mathrm{loc}}\cap L^\infty\) initial data taking values in \(K\).

Section~4 extends the construction to general interface geometries. After
flattening each regular part of the discontinuity set, the local theory is
formulated in terms of the transformed co-normal flux. Local flat-interface
solvers are constructed by localization and extension, shown to be compatible
on overlapping charts, and patched together by means of local
\(L^1\)-stability and finite propagation. Finally, interaction sets of
vanishing \((d-1)\)-dimensional Hausdorff measure are removed from the weak and
Kato formulations by a cutoff argument.


\section{Localized Panov compactness, traces, and normal-flux quotients}
\label{sec:quasi-averaging-traces}

Throughout this section,
\[
    K=[-a,a]
\]
is the invariant interval and \(\Omega\subset\mathbb R^m\) is open.  In the
applications \(m=d+1\), and the independent variable is the space--time
variable.

The compactness and trace statements used below are local consequences of
Panov's theory.  The reduction to the standard results is elementary.  On
every cylinder
\[
    O\times(\alpha,\beta)\Subset\Omega\times K,
\]
the corresponding kinetic average is a constant truncation:
\begin{equation}
\label{eq:constant-truncation-sign-identity}
    \int_\alpha^\beta
        \operatorname{sgn}(v-\lambda)\,d\lambda
    =
    2s_{\alpha,\beta}(v)-\alpha-\beta,
\end{equation}
where
\[
    s_{\alpha,\beta}(v)
    :=
    \min\{\max\{v,\alpha\},\beta\}.
\]
Thus Panov's compactness and trace theorems can be applied cylinder by
cylinder, while general localized weights are obtained by approximation with
finite linear combinations of rectangular weights.  We record only the forms
needed later.

The normal-flux construction is treated separately.  If \(N\) is \(C^1\) in
the independent and kinetic variables, we work on the open set on which
\(\partial_\lambda N\neq0\) and define the quotient using the continuous
weight \(|\partial_\lambda N|^2\).  This quotient collapses exactly the flat
intervals of \(N\).  In particular, no parametrization of moving flat
intervals, no indicator of a non-flat region, and no additional structural
assumption on the normal flux are required.

\subsection{Localized compactness of truncations and kinetic averages}
\label{subsec:localized-panov-compactness}

\begin{proposition}[Localized Panov compactness]
\label{thm:local-qualitative-averaging}
Let
\[
    \mathcal F\in
    L^2_{\mathrm{loc}}
    \bigl(\Omega;C(K;\mathbb R^m)\bigr),
\]
and let \(u_n:\Omega\to K\) be measurable.  Let
\(A\subset\Omega\times K\) be relatively open and set
\[
    A_x:=\{\lambda\in K:(x,\lambda)\in A\}.
\]
Assume the following.

\smallskip
\noindent
\emph{(i) Panov truncation criterion.}
For every cylinder
\[
    O\times I\Subset A
\]
and every \(\alpha<\beta\) with
\[
    [\alpha,\beta]\Subset I,
\]
the sequence
\begin{equation}
\label{eq:panov-truncation-criterion}
    \operatorname{div}_x
    \mathcal F\bigl(x,s_{\alpha,\beta}(u_n)\bigr)
\end{equation}
is precompact in \(H^{-1}_{\mathrm{loc}}(O)\).

\smallskip
\noindent
\emph{(ii) Local interval non-degeneracy.}
For a.e. \(x\in\Omega\) and every
\(\xi\in\mathbb R^m\setminus\{0\}\), the function
\[
    \lambda\longmapsto
    \xi\cdot\mathcal F(x,\lambda)
\]
is not constant on any non-degenerate interval
\[
    J\Subset A_x.
\]

Then, for every cylinder \(O\times I\Subset A\) and every
\(\alpha<\beta\) with \([\alpha,\beta]\Subset I\), the sequence
\[
    s_{\alpha,\beta}(u_n)
\]
is strongly precompact in \(L^1_{\mathrm{loc}}(O)\).
\end{proposition}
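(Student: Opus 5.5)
The plan is to reduce the statement, cylinder by cylinder, to Panov's compactness theorem for quasi-solutions \cite{Panov_arma}, applied to the truncated sequence \(v_n:=s_{\alpha,\beta}(u_n)\) with values in \([\alpha,\beta]\). Fix \(O\times I\Subset A\) and \([\alpha,\beta]\Subset I\). For every \(x\in O\) we have \([\alpha,\beta]\subset I\subset A_x\), so hypothesis (ii) gives the following. For a.e. \(x\in O\) and every \(\xi\neq0\), the map \(\lambda\mapsto\xi\cdot\mathcal F(x,\lambda)\) is not constant on any non-degenerate subinterval of \([\alpha,\beta]\). This is exactly Panov's non-degeneracy condition for the flux \(\mathcal F\) restricted to \(O\times[\alpha,\beta]\). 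The degenerate behaviour of \(\mathcal F\) outside \(A\) plays no role, because \(v_n\) never leaves \([\alpha,\beta]\).

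The first step is to verify Panov's quasi-solution (\(H^{-1}\)-compactness) hypothesis for \(v_n\). Panov's theorem requires that, for every \(k\in\mathbb R\), the sequence \(\operatorname{div}_x\bigl[\operatorname{sgn}(v_n-k)(\mathcal F(x,v_n)-\mathcal F(x,k))\bigr]\) be precompact in \(H^{-1}_{\mathrm{loc}}(O)\). It suffices to check this for \(k\in(\alpha,\beta)\); for \(k\) outside this interval the sign is constant and the expression reduces to \(\pm\operatorname{div}\mathcal F(x,v_n)\) plus a term independent of \(n\). For \(k\in(\alpha,\beta)\), one splits
\[
\operatorname{sgn}(v-k)\bigl(\mathcal F(x,v)-\mathcal F(x,k)\bigr)
=
\bigl(\mathcal F(x,s_{k,\beta}(v))-\mathcal F(x,k)\bigr)
-
\bigl(\mathcal F(x,k)-\mathcal F(x,s_{\alpha,k}(v))\bigr).
\]
One also uses \(s_{k,\beta}(s_{\alpha,\beta}(u_n))=s_{k,\beta}(u_n)\) and \(s_{\alpha,k}(s_{\alpha,\beta}(u_n))=s_{\alpha,k}(u_n)\). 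Hypothesis (i), applied with the intervals \([k,\beta]\) and \([\alpha,k]\), both of which are \(\Subset I\), then gives \(H^{-1}_{\mathrm{loc}}\)-precompactness of each divergence. The terms \(\operatorname{div}\mathcal F(x,k)\) do not depend on \(n\), so they are harmless.

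The second step concerns the regularity of \(\mathcal F\) in \(x\). Panov's result allows a Carathéodory flux in \(L^2_{\mathrm{loc}}(\Omega;C(K))\); the divergence is understood distributionally, and the bound \(|\mathcal F(x,v_n)|\le\|\mathcal F(x,\cdot)\|_{C(K)}\in L^2_{\mathrm{loc}}\) gives boundedness in \(L^2_{\mathrm{loc}}\). I would then invoke the theorem on \(O\) with state interval \([\alpha,\beta]\) to conclude that \(v_n\) is precompact in \(L^1_{\mathrm{loc}}(O)\).

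The expected main obstacle is that Panov's theorem is formulated with non-degeneracy required on all of the state range and for entropy pairs at every level \(k\). The paper's hypotheses are localized only to truncations inside \(I\). The reduction above handles this, but it needs care on two points. First, the non-degeneracy exceptional null set must be chosen independently of \(\xi\); this is given in (ii) as "a.e. \(x\), every \(\xi\)". Second, the H-measure/ultra-parabolic localization argument in Panov uses only entropies with \(k\) in the range of \(v_n\). If a direct citation is unavailable, I would instead rerun Panov's H-measure argument for the kinetic functions \(\operatorname{sgn}(v_n-\lambda)\), \(\lambda\in(\alpha,\beta)\). In that argument one shows that the H-measure limit is supported where \(\xi\cdot\mathcal F(x,\cdot)\) is constant on an interval, which is excluded by (ii), so the defect measure vanishes.
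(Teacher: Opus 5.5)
Your proposal is correct and follows essentially the same route as the paper. You apply Panov's criterion to \(v_n=s_{\alpha,\beta}(u_n)\), note that non-degeneracy is inherited on \([\alpha,\beta]\subset A_x\) for \(x\in O\), and reduce the \(H^{-1}_{\mathrm{loc}}\) hypothesis to (i) because truncations of \(v_n\) are truncations of \(u_n\); the paper states this directly in truncation form, whereas you pass through the Kruzhkov entropy fluxes. One harmless sign slip: the splitting should read \(\operatorname{sgn}(v-k)(\mathcal F(x,v)-\mathcal F(x,k))=(\mathcal F(x,s_{k,\beta}(v))-\mathcal F(x,k))+(\mathcal F(x,k)-\mathcal F(x,s_{\alpha,k}(v)))\), with a plus sign, since your version equals \(\mathcal F(x,v)-\mathcal F(x,k)\) also for \(v<k\); the conclusion is unaffected.
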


\begin{proof}
Fix \(O\times I\Subset A\) and
\([\alpha,\beta]\Subset I\).  Apply Panov's strong precompactness criterion
\cite[Theorem~6]{Panov_arma} to
\[
    v_n:=s_{\alpha,\beta}(u_n).
\]
Every constant truncation of \(v_n\) is again a constant truncation of
\(u_n\), and the non-degeneracy condition is inherited on
\([\alpha,\beta]\).  Hence the hypotheses of Panov's theorem are satisfied
locally on \(O\).
\end{proof}

\begin{remark}[Verification from a kinetic formulation]
\label{rem:kinetic-verification-panov}
In the applications, condition
\eqref{eq:panov-truncation-criterion} is verified through a kinetic equation.
Let
\[
    b(x,\lambda):=\partial_\lambda\mathcal F(x,\lambda),
    \qquad
    h_n(x,\lambda):=\operatorname{sgn}(u_n(x)-\lambda).
\]
Under the usual local bound
\[
    \mathcal F\in
    L^p_{\mathrm{loc}}
    \bigl(\Omega;C^1(K;\mathbb R^m)\bigr),
    \qquad p>2,
\]
assume that
\begin{equation}
\label{eq:kinetic-verification-panov}
    \operatorname{div}_x
    \bigl(b(x,\lambda)h_n(x,\lambda)\bigr)
    =
    \partial_\lambda\gamma_n
    +
    \partial_\lambda\operatorname{div}_x r_n
    \quad\text{in }\mathcal D'(\Omega\times K),
\end{equation}
where \((\gamma_n)\) is locally bounded in
\(\mathcal M_{\mathrm{loc}}(\Omega\times K)\) and
\[
    r_n\to0
    \quad\text{in }
    L^2_{\mathrm{loc}}(\Omega\times K;\mathbb R^m).
\]
Testing in \(\lambda\) with smooth approximations of
\(\mathbf 1_{(\alpha,\beta)}\), using
\begin{equation}
\label{eq:flux-truncation-identity}
\begin{aligned}
    \int_\alpha^\beta
        b(x,\lambda)
        \operatorname{sgn}(u_n-\lambda)\,d\lambda
    &=
    2\mathcal F\bigl(x,s_{\alpha,\beta}(u_n)\bigr)
    -\mathcal F(x,\alpha)-\mathcal F(x,\beta),
\end{aligned}
\end{equation}
and applying Murat's compactness lemma gives
\eqref{eq:panov-truncation-criterion}.  This is the standard kinetic
verification of Panov's constant-truncation hypothesis.
\end{remark}

\begin{corollary}[Localized kinetic averages]
\label{cor:weights-supported-in-A}
Under the assumptions of
Proposition~\ref{thm:local-qualitative-averaging}, let
\[
    \rho\in L^\infty_{\mathrm{loc}}(\Omega\times K)
\]
have compact support in the \(x\)-variable and satisfy
\[
    \rho=0
    \quad\text{a.e. on }(\Omega\times K)\setminus A.
\]
Then
\begin{equation}
\label{eq:localized-weighted-average}
    \int_K
        \operatorname{sgn}(u_n(x)-\lambda)
        \rho(x,\lambda)\,d\lambda
\end{equation}
is strongly precompact in \(L^1_{\mathrm{loc}}(\Omega)\).
\end{corollary}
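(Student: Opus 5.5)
The plan is to reduce the weighted average \eqref{eq:localized-weighted-average} to finite linear combinations of constant truncations $s_{\alpha,\beta}(u_n)$ on cylinders $O\times I\Subset A$, to which Proposition~\ref{thm:local-qualitative-averaging} applies. The basic identity is \eqref{eq:constant-truncation-sign-identity}. If $\rho=c\,\mathbf 1_{O}(x)\mathbf 1_{(\alpha,\beta)}(\lambda)$ with $O\times[\alpha,\beta]\Subset A$ (choose an open interval $I$ with $[\alpha,\beta]\Subset I$ and $O\times I\Subset A$), then the average equals $c\,\mathbf 1_O(x)\bigl(2s_{\alpha,\beta}(u_n)-\alpha-\beta\bigr)$. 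This is precompact in $L^1_{\mathrm{loc}}(O)$ by the proposition. Since $|2s_{\alpha,\beta}(u_n)-\alpha-\beta|\le\beta-\alpha$ and $\mathbf 1_O$ is fixed, it is in fact precompact in $L^1_{\mathrm{loc}}(\Omega)$. To see this, cover $O$ up to a set of small measure by compactly contained open subsets and use the uniform $L^\infty$ bound to control the remainder. By linearity, precompactness then holds for every finite linear combination of such rectangular weights.

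Next comes approximation. Because $\rho$ has compact $x$-support, pick a compact $Q\subset\Omega$ containing it, so that $\rho\in L^\infty(Q\times K)\subset L^1(Q\times K)$. The set $A\cap(Q^\circ\times K)$, or $A$ restricted near $Q$, is relatively open in $\Omega\times K$. Hence it is a countable union of open boxes $O_k\times I_k\Subset A$ with $I_k$ relatively open intervals in $K$. Closed sub-boxes can be taken compactly contained in $A$, and the endpoints $\pm a$ cause no trouble because the proposition only needs $[\alpha,\beta]\Subset I$ inside $A_x$ relative to $K$. We may refine to a countable family of dyadic-type rectangles whose union is $A$ up to a null set. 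Given $\delta>0$, since $\rho=0$ a.e. off $A$, there is a finite simple function $\rho_\delta=\sum_{k}c_k\mathbf 1_{R_k}$, with rectangles $R_k=O_k\times(\alpha_k,\beta_k)$ satisfying $\overline{O_k}\times[\alpha_k,\beta_k]\subset A$, such that $\|\rho-\rho_\delta\|_{L^1(\Omega\times K)}<\delta$. The argument is standard: approximate $\rho$ in $L^1$ on $A\cap(Q\times K)$ by finite combinations of indicators of measurable sets, then approximate each of those by finite unions of such inner rectangles using inner regularity of Lebesgue measure on the open set $A$.

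Finally, set $w_n(x)=\int_K\operatorname{sgn}(u_n-\lambda)\rho\,d\lambda$ and let $w_n^\delta$ be the corresponding average with $\rho_\delta$. Because $|\operatorname{sgn}|\le1$, we get
\[
\|w_n-w_n^\delta\|_{L^1(\Omega)}\le\|\rho-\rho_\delta\|_{L^1(\Omega\times K)}<\delta
\]
uniformly in $n$. Each $(w_n^\delta)_n$ is precompact in $L^1_{\mathrm{loc}}(\Omega)$ by the first step. A sequence that lies uniformly within $\delta$ of a precompact set for every $\delta$ is totally bounded, so $(w_n)$ is precompact in $L^1_{\mathrm{loc}}(\Omega)$. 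The only step needing care is the second one: the rectangles must be compactly contained in $A$, in the relative topology of $\Omega\times K$ so that $\lambda=\pm a$ is handled correctly, and $\rho$ must vanish off $A$ so that the mass of $\rho$ outside finitely many such rectangles can be made small. Everything else is routine linearity and a diagonal/total-boundedness argument.
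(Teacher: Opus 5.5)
Your proposal is correct and is essentially the paper's proof: approximate $\rho$ in $L^1$ by finite sums of rectangular weights compactly contained in $A$, treat each summand by \eqref{eq:constant-truncation-sign-identity} and Proposition~\ref{thm:local-qualitative-averaging}, and pass to $\rho$ through the $n$-uniform bound $\|w_n-w_n^\delta\|_{L^1}\le\|\rho-\rho_\delta\|_{L^1}$ (the paper uses $\psi(x)\mathbf 1_{(\alpha,\beta)}(\lambda)$ on each fixed $Q\Subset\Omega$, while you use $c\,\mathbf 1_{O}(x)\mathbf 1_{(\alpha,\beta)}(\lambda)$ globally, which makes no difference). One wording fix in the density step: approximating a measurable $E\subset A$ by finite unions of such rectangles needs outer regularity (an open $U$ with $E\subset U\subset A$ and $|U\setminus E|$ small, then a dyadic decomposition of $U$), not inner regularity alone, but this is a standard fact and not a gap.
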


\begin{proof}
Fix \(Q\Subset\Omega\).  Since \(A\) is relatively open and \(\rho\) vanishes
a.e. outside \(A\), the restriction of \(\rho\) to \(Q\times K\) can be
approximated in \(L^1(Q\times K)\) by finite sums
\[
    \rho_j(x,\lambda)
    =
    \sum_{\ell=1}^{L_j}
        \psi_{j,\ell}(x)
        \mathbf 1_{(\alpha_{j,\ell},\beta_{j,\ell})}(\lambda),
\]
where
\[
    \operatorname{supp}\psi_{j,\ell}
    \times
    [\alpha_{j,\ell},\beta_{j,\ell}]
    \Subset A.
\]
For every summand, identity
\eqref{eq:constant-truncation-sign-identity} and
Proposition~\ref{thm:local-qualitative-averaging} give strong compactness.
The passage to \(\rho\) follows from
\[
\begin{aligned}
&\int_Q
 \left|
 \int_K
     \operatorname{sgn}(u_n-\lambda)
     (\rho-\rho_j)(x,\lambda)\,d\lambda
 \right|dx
\\
&\qquad\le
 \int_Q\int_K
     |\rho-\rho_j|(x,\lambda)\,d\lambda\,dx,
\end{aligned}
\]
uniformly in \(n\).
\end{proof}

\begin{corollary}[Variable truncations]
\label{cor:variable-truncations}
Under the assumptions of
Proposition~\ref{thm:local-qualitative-averaging}, let
\[
    \alpha,\beta\in L^\infty_{\mathrm{loc}}(\Omega;K),
    \qquad
    \alpha(x)\le\beta(x)
    \quad\text{for a.e. }x.
\]
Assume that
\[
    \bigl\{
        (x,\lambda)\in\Omega\times K:
        \alpha(x)<\lambda<\beta(x)
    \bigr\}
    \subset A
\]
up to a set of \((m+1)\)-dimensional Lebesgue measure zero.  Then
\[
    s_{\alpha(x),\beta(x)}(u_n(x))
\]
is strongly precompact in \(L^1_{\mathrm{loc}}(\Omega)\).
\end{corollary}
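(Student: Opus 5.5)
The plan is to reduce the variable truncation to a localized kinetic average and then invoke Corollary~\ref{cor:weights-supported-in-A}. The key identity is the pointwise version of \eqref{eq:constant-truncation-sign-identity} with variable endpoints: for a.e. \(x\),
\[
    2s_{\alpha(x),\beta(x)}(u_n(x))
    =
    \int_K
        \operatorname{sgn}(u_n(x)-\lambda)\,
        \mathbf 1_{(\alpha(x),\beta(x))}(\lambda)\,d\lambda
    +\alpha(x)+\beta(x).
\]
This holds because, for fixed \(x\), it is exactly the constant-endpoint identity with \(\alpha=\alpha(x)\), \(\beta=\beta(x)\). The degenerate case \(\alpha(x)=\beta(x)\) is consistent, since both sides then equal \(2\alpha(x)\). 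The term \(\alpha+\beta\) is independent of \(n\) and lies in \(L^\infty_{\mathrm{loc}}\subset L^1_{\mathrm{loc}}\). Hence it suffices to prove strong precompactness of the weighted average with weight \(\mathbf 1_{(\alpha(x),\beta(x))}(\lambda)\).

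Next I fix \(Q\Subset\Omega\) and a cutoff \(\chi\in C_c(\Omega)\) with \(0\le\chi\le1\) and \(\chi=1\) on \(Q\). I set
\[
    \rho(x,\lambda):=\chi(x)\mathbf 1_{\{\alpha(x)<\lambda<\beta(x)\}}.
\]
The set \(\{\alpha(x)<\lambda<\beta(x)\}\) is measurable in \(\Omega\times K\) because \(\alpha\) and \(\beta\) are measurable. So \(\rho\) is bounded, measurable, and compactly supported in \(x\). By hypothesis \(\rho=0\) a.e. outside \(A\), since the set where \(\rho\neq0\) lies in \(A\) up to a null set. Corollary~\ref{cor:weights-supported-in-A} then gives strong precompactness of \(\int_K\operatorname{sgn}(u_n-\lambda)\rho\,d\lambda\) in \(L^1_{\mathrm{loc}}(\Omega)\). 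On \(Q\) this coincides with the average above. Since \(Q\) is arbitrary, a diagonal argument over an exhaustion of \(\Omega\) yields precompactness in \(L^1_{\mathrm{loc}}(\Omega)\).

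The main point to check is that Corollary~\ref{cor:weights-supported-in-A} really accepts a rough, discontinuous weight that vanishes only almost everywhere outside \(A\). Its proof approximates \(\rho\) in \(L^1(Q\times K)\) by rectangle sums \(\psi(x)\mathbf 1_{(\alpha',\beta')}(\lambda)\) whose supports are compactly contained in \(A\). For an indicator of a measurable subset of the open set \(A\), this is justified as follows. First, inner regularity gives a compact subset of \(A\) carrying almost all the mass. Next, a finite cover by cylinders compactly contained in \(A\) yields a disjoint rectangle decomposition of that compact set. Finally, the \(x\)-indicators are approximated by continuous \(\psi\). All errors are controlled by \(\|\rho-\rho_j\|_{L^1}\), uniformly in \(n\), exactly as in the proof of Corollary~\ref{cor:weights-supported-in-A}. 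If that corollary is read as already covering such weights, nothing further is needed. Otherwise I would insert this approximation step explicitly.
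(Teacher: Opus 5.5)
Your proposal is correct and is essentially the paper's argument: the pointwise identity \eqref{eq:variable-truncation-identity} reduces the variable truncation to a localized kinetic average with weight $\mathbf 1_{\{\alpha(x)<\lambda<\beta(x)\}}$, and Corollary~\ref{cor:weights-supported-in-A} then applies; your cutoff $\chi$ only makes that corollary's compact $x$-support hypothesis explicit. The extra approximation check in your last paragraph is not needed, because that corollary is already stated for $L^\infty_{\mathrm{loc}}$ weights vanishing a.e.\ outside $A$. As written, that sketch is also slightly loose: a finite cylinder cover of a compact set does not decompose it, so you would instead use outer regularity to pick an open $V\Subset A$ around it and approximate $V$ by finitely many boxes whose closures lie in $A$.
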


\begin{proof}
Use
\begin{equation}
\label{eq:variable-truncation-identity}
    s_{\alpha(x),\beta(x)}(u_n(x))
    =
    \frac{\alpha(x)+\beta(x)}2
    +
    \frac12
    \int_K
        \operatorname{sgn}(u_n(x)-\lambda)
        \mathbf 1_{\{\alpha(x)<\lambda<\beta(x)\}}
        \,d\lambda
\end{equation}
and Corollary~\ref{cor:weights-supported-in-A}.
\end{proof}

\subsection{Localized strong traces}
\label{subsec:localized-panov-traces}

Let
\[
    Q_R^+:=B_R'\times(0,R)
    \subset\mathbb R^{m-1}\times\mathbb R_+,
    \qquad
    x=(x',y).
\]

\begin{definition}[Quasi-solution up to the flat boundary]
\label{def:quasi-solution-replaced-vector}
Let
\[
    \mathcal F\in
    C^1\bigl(\overline{Q_R^+}\times K;\mathbb R^m\bigr)
\]
and let \(u\in L^\infty(Q_R^+;K)\).  We call \(u\) a quasi-solution
associated with \(\mathcal F\), locally up to \(B_R'\times\{0\}\), if for
every \(k\in K\),
\begin{equation}
\label{eq:quasi-entropy-production}
    \operatorname{div}_x
    \left[
        \operatorname{sgn}(u-k)
        \bigl(
            \mathcal F(x,u)-\mathcal F(x,k)
        \bigr)
    \right]
    \in
    \mathcal M_{\mathrm{loc}}
    \bigl(\overline{Q_R^+}\bigr).
\end{equation}
\end{definition}

\begin{definition}[Strong trace]
\label{def:strong-trace}
A function \(G\in L^\infty(Q_R^+)\) has the strong trace
\(G^\tau\in L^\infty_{\mathrm{loc}}(B_R')\) on
\(B_R'\times\{0\}\) if, for every \(B\Subset B_R'\),
\begin{equation}
\label{eq:strong-trace-definition}
    \operatorname*{ess\,lim}_{y\to0+}
    \int_B
        |G(x',y)-G^\tau(x')|\,dx'
    =0.
\end{equation}
\end{definition}

\begin{proposition}[Localized Panov traces of constant truncations]
\label{prop:localized-panov-traces}
Let \(u\) be a quasi-solution associated with \(\mathcal F\) in the sense of
Definition~\ref{def:quasi-solution-replaced-vector}.  Let
\[
    A\subset\overline{Q_R^+}\times K
\]
be relatively open.  Assume that, for every cylinder
\[
    B\times[0,\delta)\times I\Subset A,
\]
for a.e. \(x'\in B\) and every
\(\xi\in\mathbb R^m\setminus\{0\}\), the function
\[
    \lambda\longmapsto
    \xi\cdot\mathcal F(x',0,\lambda)
\]
is not constant on any non-degenerate interval \(J\Subset I\).

Then, for every
\[
    [\alpha,\beta]\Subset I,
\]
the constant truncation
\[
    s_{\alpha,\beta}(u)
\]
has a strong trace on \(B\times\{0\}\).
\end{proposition}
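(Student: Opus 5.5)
The plan is to reduce the statement to Panov's strong trace theorem for quasi-solutions \cite{pan_traces,PanovTraces2007}, applied to the truncated function
\[
    v:=s_{\alpha,\beta}(u),
\]
in the same spirit as the proof of Proposition~\ref{thm:local-qualitative-averaging}. The first step is to check that \(v\) is again a quasi-solution, now associated with the vector \(\mathcal F\) restricted to states in \([\alpha,\beta]\). For \(k\in[\alpha,\beta]\) one has
\[
    \operatorname{sgn}(v-k)\bigl(\mathcal F(x,v)-\mathcal F(x,k)\bigr)
    =
    \tfrac12\Bigl[\operatorname{sgn}(u-k)\bigl(\mathcal F(x,u)-\mathcal F(x,k)\bigr)\Bigr]_{\mathrm{trunc}},
\]
which is best written through the kinetic identity \eqref{eq:flux-truncation-identity}. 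Concretely, the entropy flux of \(v\) at level \(k\) equals
\[
    \int_\alpha^\beta \operatorname{sgn}(v-\lambda)\operatorname{sgn}(\lambda-k)\ldots,
\]
so more cleanly I would use the standard decomposition
\[
    |v-k|\text{-flux}
    =
    \tfrac12\bigl(q(u,k\vee\alpha)\ \text{combinations}\bigr).
\]
Explicitly, for \(k\in[\alpha,\beta]\),
\[
    \operatorname{sgn}(v-k)\bigl(\mathcal F(v)-\mathcal F(k)\bigr)
    =
    \tfrac12\Bigl(q(u,\alpha)+q(u,\beta)\Bigr)\text{-type expressions}
    -\tfrac12\Bigl(q(u,\max(\cdot))\Bigr),
\]
which can be verified by cases \(u<\alpha\), \(\alpha\le u\le\beta\), and \(u>\beta\). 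Here \(q(u,k):=\operatorname{sgn}(u-k)(\mathcal F(x,u)-\mathcal F(x,k))\). The precise identity is
\[
    \operatorname{sgn}(v-k)\bigl(\mathcal F(v)-\mathcal F(k)\bigr)
    =
    q(u,k)
    -\tfrac12\bigl(q(u,\alpha)-\operatorname{sgn}(u-\alpha)\ldots\bigr).
\]
Rather than guess the formula, I would simply differentiate the piecewise expression in \(u\): the map \(u\mapsto q(v(u),k)\) is a linear combination of \(q(u,\alpha)\), \(q(u,\beta)\), \(q(u,k)\), \(\mathcal F(x,u)\) and functions of \(x\) alone. Since \(\operatorname{div}_x\mathcal F(x,u)=\tfrac12\bigl(\operatorname{div}q(u,-a)-\operatorname{div}q(u,a)\bigr)+\operatorname{div}_x(\cdots(x))\), and \(\mathcal F\in C^1\) makes the \(x\)-only terms locally bounded, each term has divergence in \(\mathcal M_{\mathrm{loc}}(\overline{Q_R^+})\) by \eqref{eq:quasi-entropy-production}. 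Hence \(v\) is a quasi-solution up to the boundary with values in \([\alpha,\beta]\).

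The second step is the non-degeneracy. Panov's trace theorem for quasi-solutions requires that, for a.e. boundary point \(x'\in B\), the function \(\lambda\mapsto\xi\cdot\mathcal F(x',0,\lambda)\) is not constant on any non-degenerate subinterval of the range \([\alpha,\beta]\) of \(v\). Since \([\alpha,\beta]\Subset I\), this is exactly the hypothesis of the proposition. Restricting to the cylinder \(B\times[0,\delta)\) also localizes the boundary, so the strong trace obtained is on \(B\times\{0\}\).

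The main obstacle I expect is the compatibility of this setting with Panov's theorem. His result is formulated for \(x\)-dependent fluxes with non-degeneracy imposed at the boundary, and it requires continuity of \(\mathcal F\) in \(x\) up to \(y=0\). This is provided by \(\mathcal F\in C^1(\overline{Q_R^+}\times K)\). If the version I cite demands non-degeneracy on all of \(K\), I would instead compose with a modified flux \(\widetilde{\mathcal F}(x,\lambda):=\mathcal F(x,s_{\alpha,\beta}(\lambda))+\lambda e\), or simply note that \(v\) takes values in \([\alpha,\beta]\), so that only that range matters in the kinetic formulation.
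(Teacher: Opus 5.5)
Your proposal is correct and follows the paper's own route: the paper simply applies Panov's strong-trace theorem for non-autonomous quasi-solutions to the truncation \(s_{\alpha,\beta}(u)\) on the boundary cylinder \(B\times[0,\delta)\), with boundary non-degeneracy inherited on \([\alpha,\beta]\Subset I\). The paper leaves implicit that the truncation is again a quasi-solution; your linear-combination claim establishes this and can be stated cleanly as \(q(s_{\alpha,\beta}(u),k)=q(u,k)-\tfrac12 q(u,\alpha)-\tfrac12 q(u,\beta)+\tfrac12\bigl(\mathcal F(x,\beta)-\mathcal F(x,\alpha)\bigr)\) for \(k\in[\alpha,\beta]\), so you should drop the garbled intermediate formulas and also the fallback flux \(\mathcal F(x,s_{\alpha,\beta}(\lambda))+\lambda e\), which is degenerate for \(\xi\perp e\).
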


\begin{proof}
This is the local form of Panov's strong-trace theorem for non-autonomous
quasi-solutions; see \cite[Remark~6.2]{PanovTraces2007}.  It is applied on the
indicated boundary cylinder to the constant truncation
\(s_{\alpha,\beta}(u)\).
\end{proof}

\begin{corollary}[Continuous localized kinetic averages]
\label{thm:strong-traces-kinetic-averages}
Assume the hypotheses of
Proposition~\ref{prop:localized-panov-traces}.  Let
\[
    \rho\in
    C\bigl(\overline{Q_R^+}\times K\bigr)
\]
and assume that
\[
    \rho(x,\lambda)=0
    \qquad
    \text{whenever }(x,\lambda)\notin A.
\]
Then
\begin{equation}
\label{eq:localized-trace-average}
    G_\rho(x',y)
    :=
    \int_K
        \operatorname{sgn}(u(x',y)-\lambda)
        \rho(x',y,\lambda)\,d\lambda
\end{equation}
has a strong trace on \(B_R'\times\{0\}\).
\end{corollary}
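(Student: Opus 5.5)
The plan is to imitate the proof of Corollary~\ref{cor:weights-supported-in-A}, with the interior compactness statement replaced by the trace statement of Proposition~\ref{prop:localized-panov-traces}, and to close by a uniform approximation argument. The guiding principle is that strong traces are stable under limits that are uniform in \(y\). Suppose \(G_j\to G\) in \(L^\infty\) (or in \(L^1(B)\) uniformly in \(y\)), and each \(G_j\) has a strong trace \(G_j^\tau\). Then \((G_j^\tau)\) is Cauchy in \(L^1(B)\), and its limit is a strong trace of \(G\), by a \(3\varepsilon\) argument in \eqref{eq:strong-trace-definition}. It therefore suffices to approximate \(\rho\) uniformly on compact subsets of \(\overline{Q_R^+}\times K\) by weights whose averages visibly have strong traces.

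First I localize. Fix \(B\Subset B_R'\) and a small \(\delta>0\). On the compact set \(\overline B\times[0,\delta]\times K\), the function \(\rho\) is uniformly continuous. Given \(\eta>0\), the set \(\{|\rho|\ge\eta\}\cap(\overline B\times[0,\delta]\times K)\) is compact and contained in \(A\), because \(\rho=0\) off \(A\). I cover it by finitely many boxes \(B_\ell\times[0,\delta_\ell)\times I_\ell\Subset A\). I then replace \(\rho\) by \(\rho_\eta:=\operatorname{sgn}(\rho)(|\rho|-\eta)_+\), which satisfies \(|\rho-\rho_\eta|\le\eta\) and is supported in the union of the boxes. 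A partition of unity subordinate to this cover, followed by a further uniform approximation of each piece by finite sums \(\sum_k \psi_k(x)\mathbf 1_{(\alpha_k,\beta_k)}(\lambda)\) with \([\alpha_k,\beta_k]\Subset I_\ell\) and \(\psi_k\) continuous, gives approximants \(\rho_j\). These satisfy \(\sup_{y<\delta'}\int_B\int_K|\rho-\rho_j|\,d\lambda\,dx'\to0\). Uniform convergence is not needed; this \(L^1\)-in-\((x',\lambda)\), uniform-in-\(y\) control is enough, since \(|\operatorname{sgn}|\le1\) gives the same bound as in the proof of Corollary~\ref{cor:weights-supported-in-A}.

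Next, for each summand, identity \eqref{eq:constant-truncation-sign-identity} gives
\[
    \int_K\operatorname{sgn}(u-\lambda)\psi_k(x)\mathbf 1_{(\alpha_k,\beta_k)}(\lambda)\,d\lambda
    =\psi_k(x',y)\bigl(2s_{\alpha_k,\beta_k}(u)-\alpha_k-\beta_k\bigr).
\]
By Proposition~\ref{prop:localized-panov-traces}, \(s_{\alpha_k,\beta_k}(u)\) has a strong trace on \(B_\ell\times\{0\}\). Since \(\psi_k\) is continuous up to \(y=0\) and bounded, the product has strong trace \(\psi_k(x',0)\bigl(2s^\tau-\alpha_k-\beta_k\bigr)\). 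Summing these gives traces for the averages \(G_{\rho_j}\), and the stability principle above transfers them to \(G_\rho\) on \(B\). Since \(B\Subset B_R'\) is arbitrary, and traces on overlapping balls agree a.e. by uniqueness of \(L^1\) limits, they patch to a trace on all of \(B_R'\times\{0\}\).

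The main obstacle is the geometric step of producing boxes of the form \(B\times[0,\delta)\times I\) inside \(A\) near the boundary \(y=0\). This is where relative openness of \(A\) in \(\overline{Q_R^+}\times K\) and the compactness of \(\{|\rho|\ge\eta\}\) are used. The \(\eta\)-cutoff is what allows one to avoid points of \(\partial A\), where \(\rho\) vanishes but no box fits. If this step turns out to be delicate, the first thing I would try is to work directly with the compact support of \(\rho_\eta\) in the relatively open set \(A\) and take product neighbourhoods, which exist because the product topology has a basis of boxes.
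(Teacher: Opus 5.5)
Your proposal is correct and follows essentially the same route as the paper. Both cut off the region where $|\rho|$ is small and approximate the remainder by finite sums of rectangular weights whose boxes lie in $A$ up to $y=0$. Both then apply identity \eqref{eq:constant-truncation-sign-identity} and Proposition~\ref{prop:localized-panov-traces} to each summand, and pass to the limit using the $y$-uniform $L^1$ error bound.

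The only difference is cosmetic. The paper first replaces $\rho$ by its boundary value $\rho(x',0,\lambda)$ using uniform continuity, so its coefficients depend on $x'$ only. You instead keep continuous coefficients $\psi_k(x',y)$ and carry their boundary values through the trace. One small fix: either take $\delta$ small depending on $\eta$, or allow interior boxes, so that every point of $\{|\rho|\ge\eta\}$ is actually covered.
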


\begin{proof}
Fix \(B\Subset B_R'\).  By continuity,
\[
    \rho(x',y,\lambda)
    \longrightarrow
    \rho(x',0,\lambda)
\]
uniformly on \(B\times K\) as \(y\to0+\).  Hence it is enough to treat the
boundary weight
\[
    \rho^0(x',\lambda):=\rho(x',0,\lambda).
\]

For each \(j\), remove the part of \(\rho^0\) on which
\(|\rho^0|\le j^{-1}\).  The resulting function differs from \(\rho^0\)
uniformly by at most \(j^{-1}\), and its support is compactly contained in
the boundary section of \(A\).  It can therefore be approximated in
\(L^1(B\times K)\) by a finite sum
\[
    \sum_{\ell=1}^{L_j}
        \psi_{j,\ell}(x')
        \mathbf 1_{(\alpha_{j,\ell},\beta_{j,\ell})}(\lambda),
\]
where each corresponding boundary rectangle has a small one-sided
neighborhood compactly contained in \(A\).

For every summand, identity
\eqref{eq:constant-truncation-sign-identity} and
Proposition~\ref{prop:localized-panov-traces} give a strong trace.  The error
estimate
\[
\begin{aligned}
&\int_B
 \left|
 \int_K
     \operatorname{sgn}(u-\lambda)
     (\rho^0-\rho_j^0)(x',\lambda)\,d\lambda
 \right|dx'
\\
&\qquad\le
 \int_B\int_K
     |\rho^0-\rho_j^0|(x',\lambda)\,d\lambda\,dx'
\end{aligned}
\]
is uniform in \(y\).  Letting first \(y\to0+\) and then \(j\to\infty\)
proves the assertion.
\end{proof}

\subsection{Polynomial replacement generated by the normal flux}
\label{subsec:open-nonflat-regions}

Assume from now on that \(m\ge2\) and that the scalar normal flux satisfies
\[
    N\in C^1(\Omega\times K).
\]
Set
\begin{equation}
\label{eq:normal-derivative-D}
    D_N(x,\lambda)
    :=
    \partial_\lambda N(x,\lambda)
\end{equation}
and define
\begin{equation}
\label{eq:normal-nondegenerate-region}
    A_N
    :=
    \bigl\{
        (x,\lambda)\in\Omega\times K:
        D_N(x,\lambda)\neq0
    \bigr\}.
\end{equation}
Since \(D_N\) is continuous, \(A_N\) is relatively open.

The set \(A_N\) need not be the largest set on which
\(N(x,\cdot)\) is locally non-flat.  For example, for
\(N(\lambda)=\lambda^3\), the point \(\lambda=0\) does not belong to \(A_N\),
although \(N\) is not constant on any neighborhood of \(0\).  This causes no
loss: the quotient weight used below is \(D_N^2\), which vanishes outside
\(A_N\), while isolated or non-interval zeros of \(D_N\) do not collapse any
interval.

Define the polynomial replacement
\begin{equation}
\label{eq:polynomial-replaced-normal-vector}
    \mathcal N_N(x,\lambda)
    :=
    \bigl(
        N(x,\lambda)^2,\,
        N(x,\lambda),\,
        N(x,\lambda)^3,\ldots,
        N(x,\lambda)^m
    \bigr)
    \in\mathbb R^m
\end{equation}
and its kinetic velocity
\begin{equation}
\label{eq:polynomial-kinetic-velocity}
    a_N(x,\lambda)
    :=
    \partial_\lambda\mathcal N_N(x,\lambda).
\end{equation}
The second component of \(\mathcal N_N\) is the physical normal flux.  All
other components are auxiliary and are used only in the compactness and trace
arguments; no condition is imposed here on the tangential components of the
physical flux.

\begin{lemma}[Non-degeneracy of the polynomial replacement]
\label{lem:polynomial-replacement-nondeg}
For every \(x\in\Omega\) and every
\(\xi\in\mathbb R^m\setminus\{0\}\), the function
\[
    \lambda\longmapsto
    a_N(x,\lambda)\cdot\xi
\]
is not identically zero on any non-degenerate interval
\[
    J\Subset(A_N)_x.
\]
Equivalently,
\[
    \lambda\longmapsto
    \mathcal N_N(x,\lambda)\cdot\xi
\]
is not constant on any such interval.
\end{lemma}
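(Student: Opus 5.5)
Fix \(x\in\Omega\), a non-degenerate interval \(J\Subset(A_N)_x\), and \(\xi\in\mathbb R^m\setminus\{0\}\). The plan is to reduce the claim to the fact that a nonzero polynomial has finitely many zeros. By the chain rule,
\[
    \mathcal N_N(x,\lambda)\cdot\xi
    =
    P_\xi\bigl(N(x,\lambda)\bigr),
    \qquad
    P_\xi(z):=\xi_1 z^2+\xi_2 z+\sum_{k=3}^m\xi_k z^k .
\]
Hence
\[
    a_N(x,\lambda)\cdot\xi=P_\xi'\bigl(N(x,\lambda)\bigr)\,D_N(x,\lambda).
\]
The equivalence of the two formulations follows because \(J\) is an interval: a \(C^1\) function on \(J\) is constant if and only if its derivative vanishes identically there.

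The first step is monotonicity. Since \(J\Subset(A_N)_x\), we have \(\overline J\subset(A_N)_x\), so \(D_N(x,\cdot)\) does not vanish on the interval \(\overline J\). By continuity it has a fixed sign, so \(N(x,\cdot)\) is strictly monotone and continuous on \(J\). Consequently \(N(x,J)\) is a non-degenerate interval \(L\subset\mathbb R\).

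The second step uses that \(P_\xi\) is a nonzero polynomial. The monomials \(z^2,z,z^3,\dots,z^m\) are distinct and nonconstant, and \(\xi\neq0\), so \(P_\xi\) is a nonconstant polynomial with \(P_\xi(0)=0\). Therefore \(P_\xi'\) is a nonzero polynomial and has at most finitely many zeros.

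Now suppose \(a_N(x,\cdot)\cdot\xi\equiv0\) on \(J\). Since \(D_N(x,\lambda)\neq0\) on \(J\), this forces \(P_\xi'(N(x,\lambda))=0\) for every \(\lambda\in J\). Then \(P_\xi'\) vanishes on the whole interval \(L\), which contains infinitely many points. This contradicts the second step.

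There is no real obstacle. The only points needing care are the two just used: that \(\overline J\subset(A_N)_x\), which gives the sign of \(D_N\) and hence \(L\) non-degenerate, and that the component list contains no constant, which gives \(P_\xi'\not\equiv0\). The ordering of the components, with \(N\) itself in the second slot, plays no role.
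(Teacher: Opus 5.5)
Your proof is correct and follows essentially the same route as the paper: you factor $a_N\cdot\xi$ as $D_N$ times a polynomial in $N$, use the fixed sign of $D_N(x,\cdot)$ on $J$ to see that $N(x,J)$ is a non-degenerate interval, and conclude that the polynomial would vanish on an interval, which forces $\xi=0$. The only cosmetic difference is that you define the polynomial as $\mathcal N_N\cdot\xi$ viewed as a function of $N$ and then differentiate it, whereas the paper writes the derivative polynomial $2\xi_0z+\xi_1+\sum_{j\ge2}(j+1)\xi_jz^j$ directly.
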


\begin{proof}
For
\[
    \xi=(\xi_0,\ldots,\xi_{m-1}),
\]
one has
\begin{equation}
\label{eq:polynomial-velocity-factorization}
    a_N(x,\lambda)\cdot\xi
    =
    D_N(x,\lambda)P_\xi(N(x,\lambda)),
\end{equation}
where
\[
    P_\xi(z)
    :=
    2\xi_0z+\xi_1
    +\sum_{j=2}^{m-1}(j+1)\xi_jz^j.
\]
Let \(J\Subset(A_N)_x\).  Since \(D_N(x,\cdot)\) is continuous and does not
vanish on the interval \(J\), it has a fixed sign there.  Hence
\(N(x,\cdot)\) is strictly monotone on \(J\), and \(N(x,J)\) is a
non-degenerate interval.

If \(a_N(x,\cdot)\cdot\xi\) vanished identically on \(J\), then
\(P_\xi\) would vanish on \(N(x,J)\).  Thus \(P_\xi\) would be the zero
polynomial, which implies \(\xi=0\), a contradiction.
\end{proof}

\subsection{The normal-flux quotient}
\label{subsec:normal-flux-quotients}

\begin{definition}[Normal-flux quotient]
\label{def:normal-flux-quotient}
Let
\[
    N\in C^1(\Omega\times K),
    \qquad
    D_N=\partial_\lambda N.
\]
We define
\begin{equation}
\label{eq:normal-flux-quotient}
    \Pi_N(x,\lambda)
    :=
    \int_{-a}^{\lambda}
        |D_N(x,s)|^2\,ds,
    \qquad
    (x,\lambda)\in\Omega\times K.
\end{equation}
\end{definition}

For fixed \(x\), the map
\[
    \lambda\longmapsto\Pi_N(x,\lambda)
\]
is continuous and nondecreasing.  It increases on the portions of the state
interval seen by \(D_N\) and collapses precisely the intervals on which the
normal flux is constant.

\begin{lemma}[Flat intervals and the reduced normal flux]
\label{lem:factorization-through-quotient}
Let \(N\in C^1(\Omega\times K)\).  For every \(x\in\Omega\) and every
\(\lambda<\mu\) in \(K\),
\begin{equation}
\label{eq:quotient-flat-equivalence}
\begin{aligned}
    \Pi_N(x,\lambda)=\Pi_N(x,\mu)
    \quad\Longleftrightarrow\quad
    N(x,\cdot)\text{ is constant on }[\lambda,\mu].
\end{aligned}
\end{equation}
Consequently, for every \(x\in\Omega\), there exists a unique continuous
function
\[
    \widehat N(x,\cdot):
    \Pi_N(x,K)\to\mathbb R
\]
such that
\begin{equation}
\label{eq:N-factorization}
    N(x,\lambda)
    =
    \widehat N
    \bigl(x,\Pi_N(x,\lambda)\bigr),
    \qquad
    \lambda\in K.
\end{equation}
The reduced normal flux \(\widehat N\) is measurable in \(x\).
\end{lemma}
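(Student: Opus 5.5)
The proof has three parts: the equivalence \eqref{eq:quotient-flat-equivalence}, the existence and uniqueness of the continuous factor \(\widehat N(x,\cdot)\), and measurability in \(x\).

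\emph{The equivalence.} Fix \(x\in\Omega\) and \(\lambda<\mu\) in \(K\). By Definition~\ref{def:normal-flux-quotient},
\[
    \Pi_N(x,\mu)-\Pi_N(x,\lambda)=\int_\lambda^\mu |D_N(x,s)|^2\,ds .
\]
The integrand is continuous and nonnegative, since \(N\in C^1\). Hence the difference vanishes if and only if \(D_N(x,\cdot)\equiv0\) on \([\lambda,\mu]\). By the fundamental theorem of calculus, this holds if and only if \(N(x,\cdot)\) is constant on \([\lambda,\mu]\).

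\emph{Construction of \(\widehat N(x,\cdot)\).} The map \(\pi:=\Pi_N(x,\cdot)\) is continuous and nondecreasing on \(K\). Its image \(I_x:=\Pi_N(x,K)\) is therefore the compact interval \([0,\Pi_N(x,a)]\). For \(r\in I_x\), the fibre \(\pi^{-1}(r)\) is a nonempty closed interval \([\lambda_r,\mu_r]\), by monotonicity and continuity. By the equivalence, \(N(x,\cdot)\) is constant on this fibre. We define \(\widehat N(x,r):=N(x,\lambda)\) for any \(\lambda\in\pi^{-1}(r)\). This is well defined, and \eqref{eq:N-factorization} holds by construction. Uniqueness is immediate, since \(\pi\) is onto \(I_x\): any factor must satisfy \(\widehat N(x,\pi(\lambda))=N(x,\lambda)\) for all \(\lambda\in K\).

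For continuity we use a compactness argument. Since \(\pi:K\to I_x\) is a continuous surjection from a compact space onto a Hausdorff space, it is a closed map, and hence a quotient map. A function on \(I_x\) whose composition with \(\pi\) is continuous is therefore continuous. Alternatively, one can argue by hand: let \(r_k\to r\) and pick \(\lambda_k\in\pi^{-1}(r_k)\). Every subsequential limit \(\lambda\) of \((\lambda_k)\) lies in \(\pi^{-1}(r)\) by continuity of \(\pi\). Consequently \(N(x,\lambda_k)\to N(x,\lambda)=\widehat N(x,r)\) along that subsequence, and this identifies the limit of the full sequence.

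\emph{Measurability in \(x\).} This is the only point requiring some care, because the domain \(I_x\) varies with \(x\). I would use the explicit selection \(\lambda_*(x,r):=\min\{\lambda\in K:\Pi_N(x,\lambda)\ge r\}\), defined for \(0\le r\le\Pi_N(x,a)\). This gives
\[
    \widehat N(x,r)=N\bigl(x,\lambda_*(x,r)\bigr).
\]
Since \(\Pi_N\) is jointly continuous, for each \(c\) we have \(\{\lambda_*(x,r)\le c\}=\{\Pi_N(x,c)\ge r\}\), which is closed in the set \(\{(x,r):r\in I_x\}\). Hence \(\lambda_*\) is Borel. Composing with the continuous function \(N\) shows that \((x,r)\mapsto\widehat N(x,r)\) is Borel on the Borel set \(\{(x,r):0\le r\le\Pi_N(x,a)\}\). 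In particular, it is measurable in \(x\) for every fixed \(r\). If a function on all of \(\Omega\times\mathbb R\) is preferred, we may extend \(\widehat N\) constantly outside \(I_x\).

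I expect this last step to be the only mild obstacle. The first two parts are elementary.
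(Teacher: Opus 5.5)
Your proposal is correct and follows the paper's proof essentially step by step: the same increment identity $\Pi_N(x,\mu)-\Pi_N(x,\lambda)=\int_\lambda^\mu|D_N(x,s)|^2\,ds$ gives the equivalence, continuity of $\widehat N(x,\cdot)$ comes from the fact that a continuous surjection from compact $K$ onto $\Pi_N(x,K)$ is a quotient map, and measurability comes from composing $N$ with the generalized inverse of $\Pi_N(x,\cdot)$. Your explicit selection $\lambda_*(x,r)$ and the identity $\{\lambda_*\le c\}=\{\Pi_N(x,c)\ge r\}$ supply the details that the paper compresses into a one-line remark, and they even give joint Borel measurability in $(x,r)$.
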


\begin{proof}
For \(\lambda<\mu\),
\begin{equation}
\label{eq:quotient-increment}
    \Pi_N(x,\mu)-\Pi_N(x,\lambda)
    =
    \int_\lambda^\mu
        |D_N(x,s)|^2\,ds.
\end{equation}
Since \(D_N(x,\cdot)\) is continuous, the right-hand side vanishes if and
only if
\[
    D_N(x,s)=0
    \qquad
    \text{for every }s\in[\lambda,\mu].
\]
This is equivalent to \(N(x,\cdot)\) being constant on
\([\lambda,\mu]\), and proves
\eqref{eq:quotient-flat-equivalence}.

Thus \(N(x,\cdot)\) is constant on every fibre of
\(\Pi_N(x,\cdot)\), and
\[
    \widehat N
    \bigl(x,\Pi_N(x,\lambda)\bigr)
    :=
    N(x,\lambda)
\]
is well defined.  Since \(\Pi_N(x,\cdot)\) is a continuous surjection from
the compact interval \(K\) onto \(\Pi_N(x,K)\), it is a quotient map.
Continuity of \(N(x,\cdot)\) therefore gives continuity of
\(\widehat N(x,\cdot)\).

Measurability in \(x\) follows, for example, by composing \(N\) with the
measurable generalized inverse of the continuous nondecreasing map
\(\Pi_N(x,\cdot)\).
\end{proof}

\begin{lemma}[Compactness of the normal-flux quotient]
\label{lem:compactness-normal-flux-quotient}
Let \(u_n:\Omega\to K\) be measurable.  Assume that the Panov truncation
criterion
\eqref{eq:panov-truncation-criterion} holds for
\[
    \mathcal F=\mathcal N_N
\]
on every cylinder compactly contained in \(A_N\).  Then
\[
    \Pi_N(x,u_n(x))
\]
is strongly precompact in \(L^1_{\mathrm{loc}}(\Omega)\).
\end{lemma}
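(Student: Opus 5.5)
The plan is to write the quotient representative as a kinetic average whose weight vanishes outside \(A_N\), and then to invoke Corollary~\ref{cor:weights-supported-in-A} with \(\mathcal F=\mathcal N_N\) and \(A=A_N\). Fix \(x\) and \(v\in K\). Since \(\operatorname{sgn}(v-\lambda)=1\) for \(\lambda<v\) and \(-1\) for \(\lambda>v\), we have
\[
    \Pi_N(x,v)
    =
    \frac12\int_K \operatorname{sgn}(v-\lambda)\,|D_N(x,\lambda)|^2\,d\lambda
    +\frac12\int_K |D_N(x,\lambda)|^2\,d\lambda .
\]
The second term does not depend on \(n\). It is continuous in \(x\), so it is locally bounded and trivially compact in \(L^1_{\mathrm{loc}}(\Omega)\). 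It therefore suffices to prove strong precompactness of the first term with \(v=u_n(x)\).

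Next we check the hypotheses of Proposition~\ref{thm:local-qualitative-averaging} for \(\mathcal F=\mathcal N_N\) and \(A=A_N\).

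\begin{itemize}
\item Regularity: \(\mathcal N_N\) is continuous on \(\Omega\times K\), hence lies in \(L^2_{\mathrm{loc}}(\Omega;C(K;\mathbb R^m))\).
\item Openness: \(A_N\) is relatively open.
\item Condition (i): this is exactly the standing assumption of the lemma.
\item Condition (ii): this is Lemma~\ref{lem:polynomial-replacement-nondeg}, which holds for every \(x\), not only almost every \(x\).
\end{itemize}

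To use Corollary~\ref{cor:weights-supported-in-A} we need a weight with compact support in \(x\). Fix \(Q\Subset\Omega\) and a cutoff \(\chi\in C_c(\Omega)\) with \(\chi=1\) on \(Q\), and set \(\rho:=\chi\,|D_N|^2\). This \(\rho\) is bounded and continuous, and it vanishes identically, not merely a.e., on \((\Omega\times K)\setminus A_N\), because \(D_N=0\) there. The corollary then gives strong precompactness of \(\int_K\operatorname{sgn}(u_n-\lambda)\rho\,d\lambda\) in \(L^1_{\mathrm{loc}}(\Omega)\). On \(Q\) this average coincides with twice the first term.

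Finally, a diagonal argument over an exhaustion \(Q_k\uparrow\Omega\) produces a single subsequence that converges in \(L^1_{\mathrm{loc}}(\Omega)\). Together with the bounds \(0\le\Pi_N\le\int_K|D_N|^2\), this gives the claim.

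The main obstacle is the approximation step inside Corollary~\ref{cor:weights-supported-in-A}. The weight \(|D_N|^2\) does not have support compactly contained in \(A_N\); it only vanishes on the boundary of \(A_N\). So one must check that it can be approximated in \(L^1(Q\times K)\) by finite sums of rectangular weights \(\psi(x)\mathbf 1_{(\alpha,\beta)}(\lambda)\) whose supports are compactly contained in \(A_N\). The continuity of \(D_N\) is what makes this work. Truncate the weight to \(\{|D_N|^2>1/j\}\); the truncation changes it uniformly by at most \(1/j\). The closure of this superlevel set within \(\overline Q\times K\) lies in \(\{|D_N|^2\ge 1/j\}\subset A_N\), so it is a compact subset of the open set \(A_N\). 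It can therefore be covered by finitely many cylinders \(O\times I\Subset A_N\), and on each such cylinder the rectangular approximation and Proposition~\ref{thm:local-qualitative-averaging} apply. The errors are controlled uniformly in \(n\) by \(\int_Q\int_K|\rho-\rho_j|\,d\lambda\,dx\), which tends to \(0\). This is the same argument as in the proof of Corollary~\ref{thm:strong-traces-kinetic-averages}.
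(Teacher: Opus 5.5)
Your proposal is correct and follows the paper's proof. The paper also writes $\Pi_N(x,u_n(x))$ as a kinetic average with weight $|D_N|^2$ plus an $n$-independent term, gets non-degeneracy from Lemma~\ref{lem:polynomial-replacement-nondeg}, and applies Corollary~\ref{cor:weights-supported-in-A} with $A=A_N$ to the cut-off weight $\zeta|D_N|^2$. Your extra superlevel-set approximation only re-checks a step the corollary already covers; it is harmless, though not needed.
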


\begin{proof}
Lemma~\ref{lem:polynomial-replacement-nondeg} verifies the local interval
non-degeneracy required in
Proposition~\ref{thm:local-qualitative-averaging}.  For a.e. \(x\),
\begin{equation}
\label{eq:quotient-as-average}
\begin{aligned}
    \Pi_N(x,u_n(x))
    &=
    \frac12
    \int_K
        \operatorname{sgn}(u_n(x)-\lambda)
        |D_N(x,\lambda)|^2\,d\lambda
\\
    &\quad+
    \frac12
    \int_K
        |D_N(x,\lambda)|^2\,d\lambda.
\end{aligned}
\end{equation}
The second term is independent of \(n\).

Let \(Q\Subset\Omega\), and choose
\[
    \zeta\in C_c^\infty(\Omega),
    \qquad
    \zeta=1
    \quad\text{on }Q.
\]
The weight
\[
    \rho(x,\lambda)
    :=
    \zeta(x)|D_N(x,\lambda)|^2
\]
is locally bounded and vanishes outside \(A_N\).  Hence
Corollary~\ref{cor:weights-supported-in-A}, applied with \(A=A_N\), gives
strong precompactness of the first term in
\eqref{eq:quotient-as-average} on \(Q\).  Since \(Q\Subset\Omega\) is
arbitrary, the conclusion follows.
\end{proof}

\begin{theorem}[Strong trace of a normal-flux quotient limit]
\label{thm:strong-trace-normal-flux-quotient}
Let
\[
    Q_R^+
    :=
    B_R'\times(0,R),
    \qquad
    x=(x',y),
    \qquad
    K=[-a,a],
\]
and assume that
\[
    N\in C^1\bigl(\overline{Q_R^+}\times K\bigr).
\]
Set
\[
    D_N(x,\lambda)
    :=
    \partial_\lambda N(x,\lambda)
\]
and define the normal-flux quotient by
\begin{equation}
\label{eq:normal-flux-quotient-trace-theorem}
    \Pi_N(x,\lambda)
    :=
    \int_{-a}^{\lambda}
        |D_N(x,s)|^2\,ds.
\end{equation}

The boundary non-degenerate region is
\begin{equation}
\label{eq:boundary-normal-nondegenerate-region}
    \widetilde A_N
    :=
    \left\{
        (x',\lambda)\in B_R'\times K:
        D_N(x',0,\lambda)\neq0
    \right\}.
\end{equation}

Let
\[
    u_n:Q_R^+\to K
\]
be measurable, and assume that
\begin{equation}
\label{eq:normal-quotient-interior-limit}
    \Pi_N(x,u_n(x))
    \longrightarrow z(x)
    \qquad
    \text{strongly in }L^1_{\mathrm{loc}}(Q_R^+).
\end{equation}

Assume moreover that, for every cylinder
\[
    B\times I\Subset\widetilde A_N,
\]
there exists \(\delta>0\) such that, for every
\[
    [\alpha,\beta]\Subset I,
\]
the constant truncations
\[
    s_{\alpha,\beta}(u_n)
\]
satisfy Panov's compactness criterion on
\[
    B\times(0,\delta),
\]
and their entropy productions are locally uniformly bounded up to
\[
    B\times\{0\}.
\]

Then \(z\) has a strong trace
\[
    z^\tau\in L^\infty_{\mathrm{loc}}(B_R')
\]
on \(B_R'\times\{0\}\).  More precisely, for every
\(B\Subset B_R'\),
\begin{equation}
\label{eq:normal-quotient-strong-trace}
    \operatorname*{ess\,lim}_{y\to0+}
    \int_B
        |z(x',y)-z^\tau(x')|\,dx'
    =0.
\end{equation}
\end{theorem}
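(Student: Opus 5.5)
The plan is to write the quotient as a localized kinetic average with a continuous weight vanishing off the non-degenerate region. Then Corollary~\ref{thm:strong-traces-kinetic-averages} is applied uniformly in \(n\), and the traces are passed to the limit. By \eqref{eq:quotient-as-average},
\[
    \Pi_N(x,u_n)
    =
    \tfrac12 G_n(x)
    +
    \tfrac12\int_K |D_N(x,\lambda)|^2\,d\lambda,
    \qquad
    G_n(x):=\int_K \operatorname{sgn}(u_n(x)-\lambda)\,\rho(x,\lambda)\,d\lambda,
\]
with \(\rho:=|D_N|^2\in C(\overline{Q_R^+}\times K)\). The second term is continuous up to \(y=0\), so it trivially has a strong trace. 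Hence it suffices to show that \(z_1:=\lim G_n\) (which exists in \(L^1_{\mathrm{loc}}\) by \eqref{eq:normal-quotient-interior-limit}) has a strong trace.

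\textbf{Step 1: reduce to the boundary weight.} As in the proof of Corollary~\ref{thm:strong-traces-kinetic-averages}, uniform continuity of \(\rho\) on \(\overline B\times[0,\delta]\times K\) lets us replace \(\rho(x',y,\lambda)\) by \(\rho^0(x',\lambda)=|D_N(x',0,\lambda)|^2\). The error is \(O(\omega(y))\) uniformly in \(n\). Then we cut off: \(\rho^0_j:=(\rho^0-j^{-1})_+\), which satisfies \(|\rho^0-\rho^0_j|\le j^{-1}\) and has support compactly contained in \(\widetilde A_N\). Next we approximate \(\rho^0_j\) in \(L^1(B\times K)\), within \(j^{-1}\), by finite sums \(\sum_\ell\psi_{j,\ell}(x')\mathbf 1_{(\alpha_{j,\ell},\beta_{j,\ell})}(\lambda)\) whose rectangles \(B_\ell\times[\alpha_\ell,\beta_\ell]\) lie inside cylinders \(B_\ell\times I_\ell\Subset\widetilde A_N\). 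By \eqref{eq:constant-truncation-sign-identity}, each summand reduces to \(\psi_{j,\ell}(x')\bigl(2s_{\alpha_\ell,\beta_\ell}(u_n)-\alpha_\ell-\beta_\ell\bigr)\). All these errors are bounded by \(\int_B\int_K|\cdot|\), independently of \(n\) and \(y\).

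\textbf{Step 2: traces of the limits of the truncations.} By hypothesis, on \(B_\ell\times(0,\delta_\ell)\) the truncations \(s_{\alpha,\beta}(u_n)\) satisfy Panov's criterion. Together with the local non-degeneracy inherited from Lemma~\ref{lem:polynomial-replacement-nondeg}, Proposition~\ref{thm:local-qualitative-averaging} gives a subsequence along which \(s_{\alpha_\ell,\beta_\ell}(u_n)\to w_\ell\) strongly. The entropy productions are uniformly bounded measures up to \(B_\ell\times\{0\}\), and this bound survives the limit by weak-\(*\) compactness. So \(w_\ell\) (more precisely, the limit entropy process) is a quasi-solution up to the boundary in the sense of Definition~\ref{def:quasi-solution-replaced-vector}, and Proposition~\ref{prop:localized-panov-traces} yields a strong trace for \(w_\ell\). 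A diagonal argument over the countably many \((j,\ell)\) fixes one subsequence. This identification is the main obstacle: it is not immediate that a strong limit of truncations of \(u_n\) is again a truncation of a single quasi-solution. To handle it, I would use \(s_{\alpha,\beta}(u_n)\to w_\ell\) strongly. Then \(s_{\alpha',\beta'}(s_{\alpha,\beta}(u_n))\to s_{\alpha',\beta'}(w_\ell)\) strongly, and the Kruzhkov-type productions \(\operatorname{div}[\operatorname{sgn}(w_\ell-k)(\mathcal N_N(x,w_\ell)-\mathcal N_N(x,k))]\) are distributional limits of measures with uniformly bounded mass. Hence \(w_\ell\) itself satisfies \eqref{eq:quasi-entropy-production} for \(k\in[\alpha_\ell,\beta_\ell]\) (outside this range the production is affine in \(w_\ell\) and is controlled similarly), and Proposition~\ref{prop:localized-panov-traces} applies directly to \(w_\ell\).

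\textbf{Step 3: conclusion.} Set \(z_{1,j}:=\sum_\ell\psi_{j,\ell}(2w_\ell-\alpha_\ell-\beta_\ell)\). Passing \(n\to\infty\) in the Step 1 estimates gives
\[
\operatorname*{ess\,sup}_{0<y<\delta}\int_B|z_1(x',y)-z_{1,j}(x',y)|\,dx'\le C j^{-1}+C\omega(\delta),
\]
where the essential supremum is justified by Fubini applied to \(L^1\) convergence on slabs. Each \(z_{1,j}\) has a strong trace \(z_{1,j}^\tau\). Taking \(y\to0+\) then shows that \((z_{1,j}^\tau)_j\) is Cauchy in \(L^1(B)\). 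Its limit \(z_1^\tau\) satisfies \eqref{eq:normal-quotient-strong-trace} by letting \(y\to0+\) and then \(j\to\infty\). The limit does not depend on the choice of \(B\) or of the subsequence, since it is determined by \(z\). Boundedness of \(z^\tau\) follows from \(0\le\Pi_N\le\int_K|D_N|^2\).
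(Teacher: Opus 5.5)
Your proposal is correct and follows essentially the paper's route: split $\Pi_N(x,u_n)=\tfrac12G_n+\tfrac12C_N$, pass from $|D_N(x',y,\lambda)|^2$ to the boundary weight by uniform continuity, cut off where $|D_N(x',0,\lambda)|$ is small, and approximate by rectangular weights via \eqref{eq:constant-truncation-sign-identity}; then apply Panov's compactness and trace theorems to the limits of the constant truncations after a diagonal extraction, with $\delta_\ell$ shrunk (as the paper does explicitly) so that $D_N\neq0$ on $\overline{B_\ell}\times[0,\delta_\ell]\times\overline{I_\ell}$ and Lemma~\ref{lem:polynomial-replacement-nondeg} applies in the interior. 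Your check that the strong limits $w_\ell$ are themselves quasi-solutions with productions bounded up to $y=0$, and your Cauchy argument moving the traces of $z_{1,j}$ to $z$, spell out steps the paper asserts in a single line.
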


\begin{proof}
The set \(\widetilde A_N\) is relatively open.  Let
\[
    B\times I\Subset\widetilde A_N.
\]
Then
\[
    |D_N(x',0,\lambda)|\ge c>0
    \qquad
    \text{on }\overline B\times\overline I.
\]
By continuity, after decreasing \(\delta>0\) if necessary,
\[
    D_N(x',y,\lambda)\neq0
\]
on
\[
    \overline B\times[0,\delta]\times\overline I.
\]
Consequently, the canonical polynomial vector
\[
    \mathcal N_N(x,\lambda)
    =
    \bigl(
        N(x,\lambda)^2,\,
        N(x,\lambda),\,
        N(x,\lambda)^3,\ldots,
        N(x,\lambda)^m
    \bigr)
\]
satisfies Panov's interval non-degeneracy condition on this cylinder.

By Panov's compactness theorem and a diagonal extraction, the constant
truncations
\[
    s_{\alpha,\beta}(u_n),
    \qquad
    [\alpha,\beta]\Subset I,
\]
converge locally to quasi-solutions on
\[
    B\times(0,\delta).
\]
The assumed entropy-production bounds permit the application of Panov's trace
theorem.  Hence the limiting constant truncations have strong traces on
\(B\times\{0\}\).

For a rectangular weight
\[
    \rho(x',\lambda)
    =
    \psi(x')\mathbf 1_{(\alpha,\beta)}(\lambda),
    \qquad
    \operatorname{supp}\psi
    \times[\alpha,\beta]
    \Subset\widetilde A_N,
\]
one has
\[
\begin{aligned}
    \int_K
        \operatorname{sgn}(u_n-\lambda)
        \rho(x',\lambda)\,d\lambda
    &=
    \psi(x')
    \bigl(
        2s_{\alpha,\beta}(u_n)-\alpha-\beta
    \bigr).
\end{aligned}
\]
Therefore the limit of this average has a strong trace.  Approximation by
finite sums of rectangular weights gives the same conclusion for every
continuous weight compactly supported in \(\widetilde A_N\).

We apply this conclusion to the boundary weight
\[
    \rho_N^\tau(x',\lambda)
    :=
    |D_N(x',0,\lambda)|^2.
\]
This weight vanishes outside \(\widetilde A_N\).  On each compact boundary
patch it can be approximated uniformly by continuous weights compactly
supported in \(\widetilde A_N\).  For example, one may use
\[
    \rho_{N,j}^\tau(x',\lambda)
    :=
    |D_N(x',0,\lambda)|^2
    \vartheta_j\bigl(|D_N(x',0,\lambda)|\bigr),
\]
where
\[
    \vartheta_j(r)=0
    \quad\text{for }r\le\frac1j,
    \qquad
    \vartheta_j(r)=1
    \quad\text{for }r\ge\frac2j.
\]
The discarded contribution satisfies
\[
\begin{aligned}
&\left|
    \int_K
        \operatorname{sgn}(u_n-\lambda)
        \bigl(
            \rho_N^\tau-\rho_{N,j}^\tau
        \bigr)(x',\lambda)\,d\lambda
\right|
\\
&\qquad\le
    \frac{4|K|}{j^2},
\end{aligned}
\]
uniformly in \(n\).  Hence the limit of the averages with weight
\(\rho_N^\tau\) has a strong trace.

Since \(N\in C^1(\overline{Q_R^+}\times K)\),
\[
    |D_N(x',y,\lambda)|^2
    \longrightarrow
    |D_N(x',0,\lambda)|^2
\]
locally uniformly in \((x',\lambda)\) as \(y\to0+\).  Therefore the same
trace is obtained for the limit of
\[
    G_n(x)
    :=
    \int_K
        \operatorname{sgn}(u_n(x)-\lambda)
        |D_N(x,\lambda)|^2\,d\lambda.
\]

Finally, define
\[
    C_N(x)
    :=
    \int_K
        |D_N(x,\lambda)|^2\,d\lambda.
\]
For every \(n\),
\begin{equation}
\label{eq:normal-quotient-average-trace-identity}
    \Pi_N(x,u_n(x))
    =
    \frac12G_n(x)
    +
    \frac12C_N(x).
\end{equation}
By \eqref{eq:normal-quotient-interior-limit},
\[
    G_n
    \longrightarrow
    G:=2z-C_N
    \qquad
    \text{strongly in }L^1_{\mathrm{loc}}(Q_R^+).
\]
The preceding argument shows that \(G\) has a strong trace \(G^\tau\).

The function \(C_N\) is continuous up to the boundary, with
\[
    C_N^\tau(x')
    =
    \int_K
        |D_N(x',0,\lambda)|^2\,d\lambda.
\]
Hence
\[
    z
    =
    \frac12(G+C_N)
\]
has the strong trace
\begin{equation}
\label{eq:normal-quotient-trace-formula}
    z^\tau(x')
    :=
    \frac12
    \left[
        G^\tau(x')
        +
        \int_K
            |D_N(x',0,\lambda)|^2\,d\lambda
    \right].
\end{equation}
\end{proof}

\section{The flat-interface Cauchy problem}
\label{sec:flat-interface}

This section treats a single flat discontinuity interface. The physical
normal flux is kept unchanged, whereas the time and tangential components of
the auxiliary space--time vectors are chosen only for the compactness
argument.

The proof proceeds in four steps. First, the viscous approximation is shown
to satisfy Panov's truncation criterion for the canonical polynomial
replacements (see \eqref{eq:polynomial-replaced-normal-vector}) on the regions where the derivative of the normal flux does not
vanish. This gives strong compactness of the normal-flux quotient variables.
Second, Panov's trace theory yields strong one-sided traces of their limits.
Third, the physical vanishing-viscosity germ is transferred to the quotient
variables, and the viscous approximation is shown to select the resulting
projected germ. Finally, a Young-measure contraction argument proves that the
Young measure generated by the viscous sequence is a Dirac mass, thereby
recovering strong compactness of the physical states.

\subsection{The equation and the normal-flux quotients}
\label{subsec:flat-setting}

Let \(d\geq1\), write
\[
    x=(x_1,x'),
\]
and set
\[
    \Omega_-:=\{x_1<0\},
    \qquad
    \Omega_+:=\{x_1>0\},
    \qquad
    \Gamma:=\{x_1=0\}.
\]
When \(d=1\), the variable \(x'\) is absent. Let
\[
    K=[-a,a].
\]
We consider
\begin{equation}
\label{eq:flat-main}
    \partial_tu+\operatorname{div}_xF(x,u)=0,
    \qquad
    u(0,x)=u_0(x),
\end{equation}
where
\begin{equation}
\label{eq:flat-piecewise-flux}
    F(x,\lambda)
    =
    H(-x_1)f(x,\lambda)
    +
    H(x_1)g(x,\lambda).
\end{equation}

The flux branches satisfy
\begin{equation}
\label{eq:flat-regularity}
    f\in C^2\bigl(\overline{\Omega_-}\times K;\mathbb R^d\bigr),
    \qquad
    g\in C^2\bigl(\overline{\Omega_+}\times K;\mathbb R^d\bigr),
\end{equation}
where the derivatives are understood one-sidedly at \(\Gamma\) and are locally
bounded on the corresponding closed half-spaces. We impose the
invariant-region condition
\begin{equation}
\label{eq:flat-invariant}
    f(x,\pm a)=0
    \quad\text{in }\Omega_-,
    \qquad
    g(x,\pm a)=0
    \quad\text{in }\Omega_+,
\end{equation}
and initially assume
\begin{equation}
\label{eq:flat-initial-bv}
    u_0\in BV(\mathbb R^d;K).
\end{equation}

The physical normal fluxes are
\begin{equation}
\label{eq:flat-normal-fluxes}
    N_-(x,\lambda):=f_1(x,\lambda),
    \qquad
    N_+(x,\lambda):=g_1(x,\lambda).
\end{equation}
Set
\[
    D_\pm(x,\lambda):=\partial_\lambda N_\pm(x,\lambda)
\]
and define the relatively open non-degenerate regions
\begin{equation}
\label{eq:flat-nondegenerate-regions}
    A_\pm
    :=
    \bigl\{
        (x,\lambda)\in\overline{\Omega_\pm}\times K:
        D_\pm(x,\lambda)\neq0
    \bigr\}.
\end{equation}
The corresponding normal-flux quotients are
\begin{equation}
\label{eq:flat-quotients}
    \Pi^\pm(x,\lambda)
    :=
    \int_{-a}^{\lambda}|D_\pm(x,s)|^2\,ds.
\end{equation}
By Lemma~\ref{lem:factorization-through-quotient}, \(\Pi^\pm(x,\cdot)\)
collapse precisely the intervals on which \(N_\pm(x,\cdot)\) is constant.
Consequently, there exist unique continuous reduced normal fluxes
\[
    \widehat N_\pm(x,\cdot):
    \Pi^\pm(x,K)\longrightarrow\mathbb R
\]
such that
\begin{equation}
\label{eq:flat-reduced-interior-normal-fluxes}
    N_\pm(x,\lambda)
    =
    \widehat N_\pm
    \bigl(x,\Pi^\pm(x,\lambda)\bigr).
\end{equation}
The quotients are associated only with the physical normal fluxes. No
factorization condition is imposed on the tangential components of \(f\) or
\(g\).

At the interface, set
\begin{equation}
\label{eq:flat-boundary-normal-fluxes}
    \Phi_-(x',\lambda):=f_1(0^-,x',\lambda),
    \qquad
    \Phi_+(x',\lambda):=g_1(0^+,x',\lambda).
\end{equation}
The boundary quotient maps are
\begin{equation}
\label{eq:flat-boundary-quotients}
    P_\pm(x',\lambda)
    :=
    \int_{-a}^{\lambda}
        |\partial_s\Phi_\pm(x',s)|^2\,ds.
\end{equation}
They are the one-sided restrictions of \(\Pi^\pm\). Hence there exist unique
continuous reduced boundary fluxes
\[
    \widehat\Phi_\pm(x',\cdot):
    P_\pm(x',K)\longrightarrow\mathbb R
\]
such that
\begin{equation}
\label{eq:flat-reduced-boundary-fluxes}
    \Phi_\pm(x',\lambda)
    =
    \widehat\Phi_\pm
    \bigl(x',P_\pm(x',\lambda)\bigr).
\end{equation}
Moreover, the \(C^2\)-regularity up to the interface gives
\begin{equation}
\label{eq:flat-quotient-boundary-convergence}
\begin{aligned}
    \Pi^-\bigl((x_1,x'),\cdot\bigr)
    &\longrightarrow
    P_-(x',\cdot)
    &&\text{in }C(K)\text{ as }x_1\uparrow0,
\\
    \Pi^+\bigl((x_1,x'),\cdot\bigr)
    &\longrightarrow
    P_+(x',\cdot)
    &&\text{in }C(K)\text{ as }x_1\downarrow0,
\end{aligned}
\end{equation}
locally uniformly in \(x'\).

\subsection{The pure-viscosity approximation and the canonical replacement}
\label{subsec:flat-viscosity}

Choose \(u_0^\varepsilon\in C^\infty(\mathbb R^d;K)\) such that
\[
    u_0^\varepsilon\longrightarrow u_0
    \quad\text{in }L^1_{\mathrm{loc}}(\mathbb R^d)
\]
and
\[
    \sup_{\varepsilon>0}|Du_0^\varepsilon|(Q)<\infty
    \qquad
    \text{for every }Q\Subset\mathbb R^d.
\]
Let \(u^\varepsilon\) denote the bounded weak solution of
\begin{equation}
\label{eq:flat-viscous-problem}
\begin{cases}
    \partial_tu^\varepsilon
    +\operatorname{div}_xF(x,u^\varepsilon)
    =\varepsilon\Delta_xu^\varepsilon,
\\[1mm]
    u^\varepsilon(0,x)=u_0^\varepsilon(x).
\end{cases}
\end{equation}
No regularization of the Heaviside function is introduced. Equivalently,
\(u^\varepsilon\) solves the two parabolic equations in \(\Omega_-\) and
\(\Omega_+\), with continuity of the state and conservation of the total
normal flux at \(\Gamma\):
\begin{equation}
\label{eq:flat-viscous-transmission}
\begin{aligned}
    u^\varepsilon(t,0^-,x')
    &=u^\varepsilon(t,0^+,x'),
\\
    f_1\bigl(0^-,x',u^\varepsilon\bigr)
    -\varepsilon\partial_{x_1}u^\varepsilon(t,0^-,x')
    &=
    g_1\bigl(0^+,x',u^\varepsilon\bigr)
    -\varepsilon\partial_{x_1}u^\varepsilon(t,0^+,x').
\end{aligned}
\end{equation}
The invariant-region condition and the parabolic maximum principle give
\begin{equation}
\label{eq:flat-viscous-bound}
    -a\leq u^\varepsilon\leq a
    \qquad
    \text{a.e. in }(0,\infty)\times\mathbb R^d.
\end{equation}

On the two sides of the interface, define the canonical replaced
space--time vectors
\begin{equation}
\label{eq:flat-canonical-replacements}
    \mathcal N_\pm(x,\lambda)
    :=
    \bigl(
        N_\pm(x,\lambda)^2,
        N_\pm(x,\lambda),
        N_\pm(x,\lambda)^3,\ldots,
        N_\pm(x,\lambda)^{d+1}
    \bigr)
    \in\mathbb R^{d+1}.
\end{equation}
The components are ordered according to \((t,x_1,x_2,\ldots,x_d)\). In
particular, the second component is the physical normal flux; all other
components are auxiliary. We set
\[
    a_\pm(x,\lambda):=\partial_\lambda\mathcal N_\pm(x,\lambda).
\]

\begin{proposition}[Viscous estimates and Panov's truncation criterion]
\label{prop:flat-viscous-panov-criterion}
For every \(T>0\) and every compact set
\(Q_\pm\Subset\overline{\Omega_\pm}\),
\begin{equation}
\label{eq:flat-tangential-bv-estimates}
    \sup_{\varepsilon>0}
    \left(
        |\partial_tu^\varepsilon|
        +
        \sum_{j=2}^d|\partial_{x_j}u^\varepsilon|
    \right)
    \bigl((0,T)\times Q_\pm\bigr)
    <\infty,
\end{equation}
and
\begin{equation}
\label{eq:flat-local-viscous-energy}
    \sup_{\varepsilon>0}
    \varepsilon
    \int_0^T\int_{Q_\pm}
        |\nabla_xu^\varepsilon|^2\,dx\,dt
    <\infty.
\end{equation}
No analogous uniform \(BV\)-estimate is asserted for
\(\partial_{x_1}u^\varepsilon\).

Let \(\mathcal Q_\pm\) be a relatively open, bounded subset of
\((0,\infty)\times\overline{\Omega_\pm}\), with compact closure, and let
\(I\Subset K\). Assume that
\begin{equation}
\label{eq:flat-cylinder-nondegeneracy}
    D_\pm(x,\lambda)\neq0
    \qquad
    \text{on }\overline{\mathcal Q_\pm}\times\overline I.
\end{equation}
Then, for every \([\alpha,\beta]\Subset I\), there exist Radon measures
\[
    \mu_{\pm,\alpha,\beta}^\varepsilon
    \in
    \mathcal M_{\mathrm{loc}}
    \bigl(\overline{\mathcal Q_\pm}\bigr)
\]
and vector fields
\[
    R_{\pm,\alpha,\beta}^\varepsilon
    \in
    L^2_{\mathrm{loc}}
    \bigl(\mathcal Q_\pm;\mathbb R^{d+1}\bigr)
\]
such that, in the relative interior of \(\mathcal Q_\pm\),
\begin{equation}
\label{eq:flat-replaced-truncation-decomposition}
    \operatorname{div}_{t,x}
    \mathcal N_\pm
    \bigl(x,s_{\alpha,\beta}(u^\varepsilon)\bigr)
    =
    \mu_{\pm,\alpha,\beta}^\varepsilon
    +
    \operatorname{div}_{t,x}
        R_{\pm,\alpha,\beta}^\varepsilon,
\end{equation}
where the measures are locally uniformly bounded up to the flat boundary and
\begin{equation}
\label{eq:flat-replaced-remainder}
    R_{\pm,\alpha,\beta}^\varepsilon
    \longrightarrow0
    \quad\text{in }L^2_{\mathrm{loc}}.
\end{equation}
Consequently,
\begin{equation}
\label{eq:flat-replaced-panov-criterion}
    \operatorname{div}_{t,x}
    \mathcal N_\pm
    \bigl(x,s_{\alpha,\beta}(u^\varepsilon)\bigr)
\end{equation}
is precompact in \(H^{-1}_{\mathrm{loc}}\) in the interior, and the
associated entropy productions are locally uniformly bounded up to
\(\Gamma\).
\end{proposition}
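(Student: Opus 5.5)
We would prove the statement in three steps: the viscous bounds \eqref{eq:flat-tangential-bv-estimates}–\eqref{eq:flat-local-viscous-energy}, then the kinetic identity for the replaced vector, then the decomposition \eqref{eq:flat-replaced-truncation-decomposition}.

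\emph{Step 1: $BV$ estimates for $\partial_t$ and $\partial_{x_j}$, $j\ge2$.} We use the fact that the interface $\{x_1=0\}$ is invariant under translations in $t$ and in $x'$. For a tangential shift $h e_j$, $j\ge2$, the shifted function $u^\varepsilon(\cdot,\cdot+he_j)$ solves the same transmission problem \eqref{eq:flat-viscous-transmission}, with flux $F(x+he_j,\cdot)$. Since $f,g\in C^2$ up to $\Gamma$, the two fluxes differ by $O(h)$ in $C^1$.

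A Kružkov-type $L^1$ contraction for the viscous transmission problem needs some care. We would take $\operatorname{sgn}_\eta$ of the difference. Continuity of the state and continuity of the total normal flux make the interface terms cancel exactly. What remains is a localized estimate with a source of size $O(h)$, which comes from $\partial_{x_j}F$ and $\partial_{x_j}\operatorname{div}_xF$. Localization is handled with a finite-speed cutoff weight $e^{-|x|}$-type or a cone. This gives
\[
\int_{Q}|u^\varepsilon(t,x+he_j)-u^\varepsilon(t,x)|\,dx\le C(T,Q)\,\bigl(|Du_0^\varepsilon|(Q')+1\bigr)|h|.
\]

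The time estimate follows the same way from time translation, because $F$ does not depend on $t$. We bound $\|\partial_tu^\varepsilon(0)\|_{L^1}$ through $\operatorname{div}F(x,u_0^\varepsilon)-\varepsilon\Delta u_0^\varepsilon$. The smoothing may need to be chosen as $\varepsilon$-dependent, with $\varepsilon|D^2u_0^\varepsilon|$ bounded in $L^1_{\mathrm{loc}}$, in order to keep this term bounded. The delta mass carried by $\partial_{x_1}F$ on $\Gamma$ is absorbed by the transmission condition, since $\partial_tu^\varepsilon$ solves a linear transmission problem.

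The energy bound \eqref{eq:flat-local-viscous-energy} comes from testing the equation with $u^\varepsilon\phi$, $\phi\ge0$ a cutoff. The interface term cancels because of \eqref{eq:flat-viscous-transmission}. The remaining terms are bounded using $|u^\varepsilon|\le a$ and $C^1$ bounds on $f,g$.

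\emph{Step 2: kinetic identity for the replaced vector.} For the convex entropy $\eta(u)=$ a smooth approximation of $\operatorname{sgn}(u-\lambda)$, we compute the entropy equation of the viscous problem in $\mathcal Q_\pm$. This gives
\[
\partial_t\operatorname{sgn}(u^\varepsilon-\lambda)+\operatorname{div}_x\bigl[\operatorname{sgn}(u^\varepsilon-\lambda)(F(x,u^\varepsilon)-F(x,\lambda))\bigr]=\partial_\lambda m^\varepsilon+\varepsilon\Delta|u^\varepsilon-\lambda|+\text{(source)},
\]
with $m^\varepsilon\ge0$ locally bounded by the energy bound. This is the physical identity, but we need it for $\mathcal N_\pm$. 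We compute the replaced divergence directly. Writing $w=s_{\alpha,\beta}(u^\varepsilon)$, we have
\[
\operatorname{div}_{t,x}\mathcal N_\pm(x,w)=2N\,\partial_\lambda N(x,w)\partial_tw+\partial_{x_1}[N(x,w)]+\sum_{j\ge2}(j{+}1)N^j\,\partial_\lambda N(x,w)\partial_{x_j}w+(\text{explicit }x\text{-derivative terms}).
\]
The auxiliary components contain only $\partial_tw$ and $\partial_{x_j}w$ with $j\ge2$, each multiplied by a bounded continuous coefficient. These are locally uniformly bounded measures by Step 1, up to $\Gamma$, because $|Dw|\le|Du^\varepsilon|$. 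The $x$-derivative terms are bounded functions.

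\emph{Step 3: the normal component, which is the main obstacle.} The only delicate term is $\partial_{x_1}[N_\pm(x,w)]$, for which there is no $BV$ bound. We would use the equation itself, in truncated form. Since $w$ is a truncation and the physical flux satisfies $\partial_{x_1}F_1=-\partial_tu-\sum_{j\ge2}\partial_{x_j}F_j+\varepsilon\Delta u$, we apply the kinetic identity of Step 2 integrated against $\mathbf 1_{(\alpha,\beta)}(\lambda)\,\partial_\lambda N_1$, as in \eqref{eq:flat-truncation-identity}. This yields
\[
\partial_{x_1}[N(x,w)]=\text{(measures from }\partial_t,\partial_{x_j},\ j\ge2)+\partial_\lambda\text{-terms of }m^\varepsilon+\varepsilon\operatorname{div}(\ldots\nabla u^\varepsilon).
\]
We set $R^\varepsilon=\varepsilon\,\mathbf 1_{\{\alpha<u^\varepsilon<\beta\}}\nabla u^\varepsilon\cdot(\text{bounded})$. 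Then $\|R^\varepsilon\|_{L^2_{\mathrm{loc}}}\le\sqrt\varepsilon\,(\varepsilon\|\nabla u^\varepsilon\|^2)^{1/2}\to0$ by \eqref{eq:flat-local-viscous-energy}, which gives \eqref{eq:flat-replaced-remainder}.

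Everything else is a measure, locally bounded up to $\Gamma$ because the interface flux terms cancel in the relative interior of $\mathcal Q_\pm$. The nondegeneracy \eqref{eq:flat-cylinder-nondegeneracy} lets us divide by $D_\pm$ when converting between the physical normal term and the truncation. The final claim then follows by Murat's lemma, as in Remark~\ref{rem:kinetic-verification-panov}: $W^{-1,\infty}$ boundedness together with (measures $+$ $H^{-1}$-null) gives $H^{-1}_{\mathrm{loc}}$ compactness.
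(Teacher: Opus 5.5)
Your architecture is the paper's. Steps~2--3 are the identity \eqref{eq:flat-replacement-divergence-identity} read in the other direction: the auxiliary components produce only $\partial_t$ and $\partial_{x_j}$, $j\ge2$, derivatives of $s_{\alpha,\beta}(u^\varepsilon)$, and the normal term is eliminated through the truncated physical equation. The remainder is $\varepsilon\nabla_x s_{\alpha,\beta}(u^\varepsilon)$ with the same $O(\sqrt\varepsilon)$ bound, and Murat's lemma concludes.

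The step that does not close as you wrote it is the tangential part of Step~1. Set $v^h(t,x):=u^\varepsilon(t,x+he_j)$. The Kato inequality between $u^\varepsilon$ and $v^h$ contains $\operatorname{sgn}(u^\varepsilon-v^h)\operatorname{div}_x\bigl[F(x+he_j,v^h)-F(x,v^h)\bigr]$. The chain rule gives the terms built from $\partial_{x_j}F$ and $\partial_{x_j}\operatorname{div}_xF$ that you list, but also the term $\bigl[\partial_\lambda F_1(x+he_j,v^h)-\partial_\lambda F_1(x,v^h)\bigr]\partial_{x_1}v^h$, of size $h|\partial_{x_1}v^h|$. Controlling it uniformly in $\varepsilon$ needs a uniform $L^1_{\mathrm{loc}}$ bound on $\partial_{x_1}u^\varepsilon$ wherever $\partial_{x_j}\partial_\lambda F_1\neq0$. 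That is exactly the bound the proposition disclaims. Differentiating the equation in $x_j$ instead produces the same term.

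As written, your argument is therefore complete only when $\partial_{x_j}\partial_\lambda f_1=\partial_{x_j}\partial_\lambda g_1=0$ for $j\ge2$. This condition fails in general, for instance for the co-normal flux \eqref{eq:general-transformed-flux} of a curved interface. The paper settles \eqref{eq:flat-tangential-bv-estimates} with a one-line appeal to the standard difference-quotient argument, so this is a gap you share with it rather than a departure from it. It is nevertheless load-bearing: the measure bound on $\mu^\varepsilon_{\pm,\alpha,\beta}$ rests on that estimate.

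The time estimate is unaffected, since $F$ and $\Gamma$ are invariant under time translations. Your remark that $\varepsilon\|\Delta u_0^\varepsilon\|_{L^1_{\mathrm{loc}}}$ must stay bounded (mollification at scale $\varepsilon$) is a correct refinement of the paper's choice of $u_0^\varepsilon$. However, the interface part of $\partial_tu^\varepsilon(0)$, a measure on $\Gamma$ with density $(f_1-g_1)(0,x',u_0^\varepsilon)$, is not absorbed by the transmission condition, because the datum does not satisfy it. It is harmless only because it is locally finite.

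Three smaller corrections:
\begin{enumerate}
\item Hypothesis \eqref{eq:flat-cylinder-nondegeneracy} plays no role in the decomposition, since nothing is divided by $D_\pm$. It is needed only when Panov's theorem is applied afterwards.
\item The truncated physical equation comes from integrating the kinetic equation against $\mathbf 1_{(\alpha,\beta)}(\lambda)$ alone. The factor $\partial_\lambda N_\pm$ enters through \eqref{eq:flat-truncation-identity}, not as an additional weight.
\item The bound up to $\Gamma$ is not a cancellation inside the one-sided set $\mathcal Q_\pm$. It comes from testing the whole-space Kruzhkov identity at $k=\alpha,\beta$ with nonnegative cutoffs crossing $\Gamma$. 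By \eqref{eq:flat-viscous-transmission}, the interface contribution then involves only the bounded jump $g_1(0^+,x',k)-f_1(0^-,x',k)$. This yields uniform bounds up to $\Gamma$ on the defect measures $\varepsilon\delta(u^\varepsilon-k)|\nabla u^\varepsilon|^2$; the energy estimate alone controls only their integral in $k$.
\end{enumerate}
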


\begin{proof}
The estimates \eqref{eq:flat-tangential-bv-estimates} follow from the standard
difference-quotient argument in time and in the tangential variables. The
interface creates no uncontrolled term in these directions. The local energy
estimate \eqref{eq:flat-local-viscous-energy} follows from the parabolic
energy inequality.

Fix \([\alpha,\beta]\Subset I\) and set
\[
    v^\varepsilon:=s_{\alpha,\beta}(u^\varepsilon).
\]
The viscous Kruzhkov identity on either half-space gives
\begin{equation}
\label{eq:flat-physical-truncation-decomposition}
\begin{aligned}
&\partial_tv^\varepsilon
+\partial_{x_1}N_\pm(x,v^\varepsilon)
+\sum_{j=2}^d
    \partial_{x_j}F_{\pm,j}(x,v^\varepsilon)
\\
&\qquad=
    \mu_{0,\pm,\alpha,\beta}^\varepsilon
    +
    \operatorname{div}_{t,x}
        R_{0,\pm,\alpha,\beta}^\varepsilon,
\end{aligned}
\end{equation}
where \(F_{-,j}=f_j\), \(F_{+,j}=g_j\), the measure parts are locally
uniformly bounded, and the remainders converge to zero in
\(L^2_{\mathrm{loc}}\). Indeed, the viscous part of the remainder is
\(\varepsilon\nabla_xv^\varepsilon\), and
\[
    \|\varepsilon\nabla_xv^\varepsilon\|_{L^2(Q)}^2
    \leq
    \varepsilon^2
    \|\nabla_xu^\varepsilon\|_{L^2(Q)}^2
    \leq C_Q\varepsilon.
\]

Since the replacement leaves the normal component unchanged,
\begin{align}
\label{eq:flat-replacement-divergence-identity}
&\operatorname{div}_{t,x}
    \mathcal N_\pm(x,v^\varepsilon)
\nonumber\\
&\quad=
    \partial_tv^\varepsilon
    +\partial_{x_1}N_\pm(x,v^\varepsilon)
    +\sum_{j=2}^d
        \partial_{x_j}F_{\pm,j}(x,v^\varepsilon)
\nonumber\\
&\qquad+
    \partial_t
    \bigl[N_\pm(x,v^\varepsilon)^2-v^\varepsilon\bigr]
    +
    \sum_{j=2}^d
    \partial_{x_j}
    \bigl[N_\pm(x,v^\varepsilon)^{j+1}
          -F_{\pm,j}(x,v^\varepsilon)\bigr].
\end{align}
The last line contains only time and tangential derivatives and is therefore
locally bounded in the space of Radon measures by
\eqref{eq:flat-tangential-bv-estimates}, the \(BV\)-chain rule, and the
local \(C^2\)-bounds on the fluxes. Combining
\eqref{eq:flat-physical-truncation-decomposition} and
\eqref{eq:flat-replacement-divergence-identity} yields
\eqref{eq:flat-replaced-truncation-decomposition}--%
\eqref{eq:flat-replaced-remainder}.

If \(\mathcal Q_\pm\) meets \(\Gamma\), the same one-sided calculation is
performed with test functions supported up to the flat boundary. The
transmission condition \eqref{eq:flat-viscous-transmission} controls the
normal boundary term and yields the asserted uniform entropy-production
bounds on compact subsets of \(\overline{\mathcal Q_\pm}\). Finally,
Murat's compactness lemma gives the interior
\(H^{-1}_{\mathrm{loc}}\)-precompactness.
\end{proof}

By Lemma~\ref{lem:polynomial-replacement-nondeg}, the vectors
\(\mathcal N_\pm\) satisfy Panov's interval non-degeneracy condition on
\(A_\pm\). Proposition~\ref{prop:flat-viscous-panov-criterion} therefore
verifies the hypotheses of the localized compactness and trace results from
Section~\ref{sec:quasi-averaging-traces}.

\subsection{Compactness of the quotient variables}
\label{subsec:flat-quotient-compactness}

Set
\begin{equation}
\label{eq:flat-projecteds}
    z_\pm^\varepsilon(t,x)
    :=
    \Pi^\pm(x,u^\varepsilon(t,x)).
\end{equation}

\begin{proposition}[Interior compactness of the quotient variables]
\label{prop:flat-interior-quotient-compactness}
The families \((z_-^\varepsilon)_\varepsilon\) and
\((z_+^\varepsilon)_\varepsilon\) are strongly precompact in
\[
    L^1_{\mathrm{loc}}
    \bigl((0,\infty)\times\Omega_-\bigr)
    \quad\text{and}\quad
    L^1_{\mathrm{loc}}
    \bigl((0,\infty)\times\Omega_+\bigr),
\]
respectively.
\end{proposition}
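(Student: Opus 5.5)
The plan is to deduce the statement from Lemma~\ref{lem:compactness-normal-flux-quotient}, applied separately on each open half-space with space--time dimension \(m=d+1\). Take \(\Omega=(0,\infty)\times\Omega_-\) and \(N=N_-\), viewed as a function of \((t,x,\lambda)\) that does not depend on \(t\). Then \(N\in C^1(\Omega\times K)\) by \eqref{eq:flat-regularity}, and \(\mathcal N_N=\mathcal N_-\) from \eqref{eq:flat-canonical-replacements}. The non-flat region in the sense of \eqref{eq:normal-nondegenerate-region} is \((0,\infty)\times A_-\), restricted to the open half-space. The case of \(\Omega_+\) with \(N=N_+\) is identical, so I treat only the minus side.

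The one hypothesis of Lemma~\ref{lem:compactness-normal-flux-quotient} to check is the Panov truncation criterion \eqref{eq:panov-truncation-criterion} for \(\mathcal F=\mathcal N_-\) on every cylinder \(O\times I\Subset(0,\infty)\times A_-\). Such a cylinder has compact closure inside the open half-space times \(K\), and \(D_-\neq0\) on it. I would set \(\mathcal Q_-:=O\), shrunk slightly if needed so that \(\overline{\mathcal Q_-}\times\overline I\) still lies in the region where \(D_-\neq0\); this is possible because that region is open and the cylinder is compactly contained in it. Condition \eqref{eq:flat-cylinder-nondegeneracy} then holds, and Proposition~\ref{prop:flat-viscous-panov-criterion} yields \eqref{eq:flat-replaced-panov-criterion}. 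This states exactly that, for every \([\alpha,\beta]\Subset I\), the family \(\operatorname{div}_{t,x}\mathcal N_-(x,s_{\alpha,\beta}(u^\varepsilon))\) is precompact in \(H^{-1}_{\mathrm{loc}}(O)\).

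The remaining hypotheses are immediate. The functions \(u^\varepsilon\) are measurable and take values in \(K\) by \eqref{eq:flat-viscous-bound}. Interval non-degeneracy of \(\mathcal N_-\) on \(A_-\) is Lemma~\ref{lem:polynomial-replacement-nondeg}, which Lemma~\ref{lem:compactness-normal-flux-quotient} already invokes internally. The regularity \(\mathcal N_-\in L^2_{\mathrm{loc}}(\Omega;C(K))\) follows from local boundedness of \(f\). Lemma~\ref{lem:compactness-normal-flux-quotient} therefore gives strong precompactness of \(\Pi^-(x,u^\varepsilon)=z_-^\varepsilon\) in \(L^1_{\mathrm{loc}}((0,\infty)\times\Omega_-)\).

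That lemma is stated for sequences. For the family \((z_-^\varepsilon)_{\varepsilon>0}\), I would apply it to an arbitrary sequence \(\varepsilon_n\to0\), which gives relative compactness of the family. The main point needing care is the passage from \(\operatorname{div}\mathcal N_-\) precompactness on cylinders adapted to \(\mathcal Q_-\) to precompactness on arbitrary cylinders compactly contained in \(A_-\). The openness and shrinking argument above handles this. No interface issue arises, because all cylinders stay strictly inside \(\Omega_-\).
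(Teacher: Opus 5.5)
Your proposal is correct and is essentially the paper's proof. The paper likewise works in the space--time variable on $(0,\infty)\times\Omega_\pm$ with non-degenerate set $(0,\infty)\times A_\pm$. It takes the truncation criterion from Proposition~\ref{prop:flat-viscous-panov-criterion} and the interval non-degeneracy from Lemma~\ref{lem:polynomial-replacement-nondeg}, and then concludes by Lemma~\ref{lem:compactness-normal-flux-quotient}. Your shrinking of $O$ is harmless but unnecessary, since $O\times I\Subset(0,\infty)\times A_-$ already gives $D_-\neq0$ on $\overline{O}\times\overline{I}$.
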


\begin{proof}
To apply the results of Section~\ref{sec:quasi-averaging-traces}, we use the
space--time variable
\[
    X=(t,x)\in(0,\infty)\times\Omega_\pm
\]
and the corresponding non-degenerate sets
\[
    \mathcal A_\pm
    :=
    (0,\infty)\times A_\pm.
\]
Proposition~\ref{prop:flat-viscous-panov-criterion} verifies Panov's
truncation criterion on every cylinder compactly contained in
\(\mathcal A_\pm\), while
Lemma~\ref{lem:polynomial-replacement-nondeg} provides the required interval
non-degeneracy. Therefore
Lemma~\ref{lem:compactness-normal-flux-quotient} applies on both sides of the
interface. \end{proof}

After passing to a subsequence, not relabelled, there exist
\[
    z_\pm\in
    L^\infty_{\mathrm{loc}}
    \bigl((0,\infty)\times\Omega_\pm\bigr)
\]
such that
\begin{equation}
\label{eq:flat-projecteds-limit}
    z_\pm^\varepsilon
    \longrightarrow z_\pm
    \quad\text{strongly in }
    L^1_{\mathrm{loc}}
    \bigl((0,\infty)\times\Omega_\pm\bigr).
\end{equation}

\begin{corollary}[Strong convergence of the normal fluxes]
\label{cor:flat-normal-flux-convergence}
Along the same subsequence,
\begin{equation}
\label{eq:flat-normal-flux-convergence}
    N_\pm(x,u^\varepsilon)
    \longrightarrow
    \widehat N_\pm(x,z_\pm)
    \quad\text{strongly in }
    L^1_{\mathrm{loc}}
    \bigl((0,\infty)\times\Omega_\pm\bigr).
\end{equation}
\end{corollary}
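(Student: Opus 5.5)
The plan is to write the normal flux as a composition with the quotient variable and pass to the limit inside the reduced flux. By Lemma~\ref{lem:factorization-through-quotient} and \eqref{eq:flat-reduced-interior-normal-fluxes},
\[
    N_\pm(x,u^\varepsilon(t,x))
    =
    \widehat N_\pm\bigl(x,z_\pm^\varepsilon(t,x)\bigr)
\]
holds pointwise. Since \(z_\pm^\varepsilon\to z_\pm\) in \(L^1_{\mathrm{loc}}\) by \eqref{eq:flat-projecteds-limit}, we can pass to a further subsequence converging a.e. It then suffices to show that \(\widehat N_\pm(x,\cdot)\) is continuous on \(\Pi^\pm(x,K)\) for a.e. \(x\), and that \(z_\pm(t,x)\in\Pi^\pm(x,K)\) a.e.

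The range condition is immediate: \(\Pi^\pm(x,K)=[0,\Pi^\pm(x,a)]\) is a closed interval, because \(\Pi^\pm(x,\cdot)\) is continuous and nondecreasing. Each \(z^\varepsilon_\pm(t,x)\) lies in this interval, so the a.e. limit does too.

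Continuity of \(\widehat N_\pm(x,\cdot)\) is part of Lemma~\ref{lem:factorization-through-quotient}. Hence
\[
    \widehat N_\pm(x,z_\pm^\varepsilon)\to\widehat N_\pm(x,z_\pm)
    \quad\text{a.e. along the subsequence.}
\]
The sequence is uniformly bounded by \(\sup_{Q\times K}|N_\pm|\) on each compact \(Q\), using the local \(C^2\)-bounds \eqref{eq:flat-regularity}. Dominated convergence then gives \(L^1_{\mathrm{loc}}\) convergence along the sub-subsequence.

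To get convergence along the original subsequence, we use uniqueness of the limit. Every sub-subsequence has a further subsequence converging to the same function \(\widehat N_\pm(x,z_\pm)\), so the whole subsequence converges. We must also check that \(\widehat N_\pm(x,z_\pm(t,x))\) is measurable in \((t,x)\). I would verify this using the measurable-inverse representation from the lemma: \(\widehat N_\pm(x,r)=N_\pm(x,\Pi^{\pm,-1}(x,r))\) with a generalized inverse that is jointly Borel. Alternatively, the function is an a.e. limit of measurable functions, and that already suffices.

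The only delicate point is this joint measurability and continuity in \(r\) for fixed \(x\), both supplied by Lemma~\ref{lem:factorization-through-quotient}. No uniformity in \(x\) of the modulus of continuity is needed, because the argument is pointwise a.e. followed by dominated convergence. So I expect no real obstacle.
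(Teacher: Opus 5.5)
Your proposal is correct and follows the paper's argument: write $N_\pm(x,u^\varepsilon)=\widehat N_\pm(x,z_\pm^\varepsilon)$, extract an a.e.\ convergent subsequence, and conclude from the continuity of $\widehat N_\pm(x,\cdot)$ and dominated convergence on compact sets. Your extra points are all correct and fill in details the paper leaves implicit: the limit $z_\pm$ takes values in the closed range $\Pi^\pm(x,K)$, the limit is measurable as an a.e.\ limit of measurable functions, and the sub-subsequence argument recovers convergence along the original subsequence.
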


\begin{proof}
For every \(\varepsilon>0\),
\[
    N_\pm(x,u^\varepsilon)
    =
    \widehat N_\pm(x,z_\pm^\varepsilon).
\]
After extracting a pointwise convergent subsequence, continuity of
\(\widehat N_\pm(x,\cdot)\) and dominated convergence on compact subsets
give the assertion.
\end{proof}

\subsection{Strong one-sided traces of the quotient variables}
\label{subsec:flat-projected-traces}

\begin{proposition}[Strong one-sided traces]
\label{prop:flat-projected-traces}
The functions \(z_-\) and \(z_+\) admit strong one-sided traces
\[
    z^-_\Gamma,
    \qquad
    z^+_\Gamma
    \in
    L^\infty_{\mathrm{loc}}
    \bigl((0,\infty)\times\mathbb R^{d-1}\bigr).
\]
More precisely, for every \(T>0\) and every
\(Q'\Subset\mathbb R^{d-1}\),
\begin{equation}
\label{eq:flat-left-strong-projected-trace}
    \operatorname*{ess\,lim}_{y\uparrow0}
    \int_0^T\int_{Q'}
        |z_-(t,y,x')-z^-_\Gamma(t,x')|\,dx'\,dt
    =0,
\end{equation}
and
\begin{equation}
\label{eq:flat-right-strong-projected-trace}
    \operatorname*{ess\,lim}_{y\downarrow0}
    \int_0^T\int_{Q'}
        |z_+(t,y,x')-z^+_\Gamma(t,x')|\,dx'\,dt
    =0.
\end{equation}
\end{proposition}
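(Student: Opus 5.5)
The plan is to reduce the statement to Theorem~\ref{thm:strong-trace-normal-flux-quotient}, applied separately on each side of \(\Gamma\) in the space--time variables. We do this for \(z_+\); the case of \(z_-\) follows by the reflection \(x_1\mapsto -x_1\).

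\textbf{Setting up Theorem~\ref{thm:strong-trace-normal-flux-quotient}.} Fix \(T>0\) and \(Q'\Subset\mathbb R^{d-1}\), and choose \(0<t_0<T\). Put \(m=d+1\) and write the space--time variable as \(X=(X',y)\), with tangential part \(X'=(t,x')\) ranging over a ball \(B_R'\supset(t_0,T)\times Q'\) and normal variable \(y=x_1>0\). Take \(N=N_+\). It lies in \(C^1(\overline{Q_R^+}\times K)\) because \(g\in C^2\) up to \(\Gamma\) by \eqref{eq:flat-regularity}, and it does not depend on \(t\). Hypothesis \eqref{eq:normal-quotient-interior-limit} is exactly \eqref{eq:flat-projecteds-limit}, with \(u_n=u^{\varepsilon_n}\).

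\textbf{Checking the local hypotheses.} The boundary region \(\widetilde A_N\) equals \(\{(t,x',\lambda):D_+(0^+,x',\lambda)\neq0\}\), which is relatively open. Take a cylinder \(B\times I\Subset\widetilde A_N\). By continuity of \(D_+\) up to \(\Gamma\), there is \(\delta>0\) with \(D_+\neq0\) on \(\overline B\times[0,\delta]\times\overline I\). Apply Proposition~\ref{prop:flat-viscous-panov-criterion} with \(\mathcal Q_+=B\times[0,\delta)\), shrinking \(B\) slightly if necessary so that its closure is compact in \((0,\infty)\times\mathbb R^{d-1}\). For every \([\alpha,\beta]\Subset I\) this yields the decomposition \eqref{eq:flat-replaced-truncation-decomposition} for \(\mathcal N_+(x,s_{\alpha,\beta}(u^\varepsilon))\). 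It gives \(H^{-1}_{\mathrm{loc}}\)-precompactness in \(B\times(0,\delta)\), which is Panov's criterion, and uniform entropy-production bounds up to \(B\times\{0\}\). Interval non-degeneracy holds by Lemma~\ref{lem:polynomial-replacement-nondeg}. All hypotheses of Theorem~\ref{thm:strong-trace-normal-flux-quotient} are therefore met, and it yields a strong trace \(z^+_\Gamma\in L^\infty_{\mathrm{loc}}\) on \(B_R'\times\{0\}\).

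\textbf{Passing to \((0,T)\times Q'\).} The trace is obtained on \((t_0,T)\times Q'\) for every \(t_0>0\). By uniqueness of strong traces these pieces are compatible, so they define \(z^+_\Gamma\) on \((0,\infty)\times\mathbb R^{d-1}\). To reach the integral over \((0,T)\) in \eqref{eq:flat-right-strong-projected-trace} we use \(L^\infty\)-boundedness: both \(z_+\) and \(z^+_\Gamma\) are bounded by \(\sup\Pi^+\), which is finite locally. Hence the strip \((0,t_0)\) contributes at most \(Ct_0|Q'|\), and we let \(t_0\to0\).

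\textbf{Main obstacle.} The delicate point is the uniform entropy-production bound up to the flat boundary in the space--time cylinder. Theorem~\ref{thm:strong-trace-normal-flux-quotient} uses it through Panov's trace theorem, which is applied to the limiting truncations viewed as quasi-solutions for \(\mathcal N_+\). I would rely on the boundary statement of Proposition~\ref{prop:flat-viscous-panov-criterion}: the transmission condition \eqref{eq:flat-viscous-transmission} controls the normal viscous flux \(\varepsilon\partial_{x_1}u^\varepsilon\) as a bounded boundary term. I would also check that, for each fixed \(k\), the limits of the measures remain locally bounded Radon measures on \(\overline{\mathcal Q_+}\), so that each limit truncation satisfies \eqref{eq:quasi-entropy-production}.
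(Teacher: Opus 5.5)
Your proposal is correct and follows the paper's proof. The paper applies Theorem~\ref{thm:strong-trace-normal-flux-quotient} with \(N=N_+\) in the space--time variables, takes the interior convergence from Proposition~\ref{prop:flat-interior-quotient-compactness} and the truncation and entropy-production bounds up to \(\Gamma\) from Proposition~\ref{prop:flat-viscous-panov-criterion}, and reflects \(x_1\mapsto -x_1\) for \(z_-\); your explicit \(L^\infty\) treatment of the strip \(t\in(0,t_0)\) just spells out what the paper compresses into ``localization in \((t,x')\)''.
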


\begin{proof}
On the right, apply
Theorem~\ref{thm:strong-trace-normal-flux-quotient} with
\(N=N_+\) and \(\Pi_N=\Pi^+\). Its interior compactness hypothesis follows
from Proposition~\ref{prop:flat-interior-quotient-compactness}, while the
truncation and entropy-production hypotheses, including the bounds up to the
flat boundary, follow from
Proposition~\ref{prop:flat-viscous-panov-criterion}. This gives
\(z^+_\Gamma\).

On the left, reflect the normal variable, \(y=-x_1\), and apply the same
theorem to
\[
    \widetilde N_-(y,x',\lambda)
    :=N_-(-y,x',\lambda),
    \qquad
    \widetilde\Pi^-(y,x',\lambda)
    :=\Pi^-(-y,x',\lambda).
\]
This gives \(z^-_\Gamma\). The stated formulas follow by localization in
\((t,x')\).
\end{proof}

\subsection{The projected vanishing--viscosity germ}
\label{subsec:flat-vv-germ}

Recall from \eqref{eq:flat-boundary-normal-fluxes} that the one-sided physical
normal fluxes at the interface are
\[
    \Phi_-(x',\lambda)
    :=
    f_1(0^-,x',\lambda),
    \qquad
    \Phi_+(x',\lambda)
    :=
    g_1(0^+,x',\lambda).
\]
By \eqref{eq:flat-reduced-boundary-fluxes}, these fluxes factor through the
boundary quotient maps \(P_\pm\):
\[
    \Phi_\pm(x',\lambda)
    =
    \widehat\Phi_\pm
    \bigl(x',P_\pm(x',\lambda)\bigr).
\]

For \(r,s\in P_\pm(x',K)\), define the reduced Kruzhkov normal entropy
fluxes by
\begin{equation}
\label{eq:flat-reduced-normal-entropy-fluxes}
    Q_\pm(x';r,s)
    :=
    \operatorname{sgn}(r-s)
    \bigl(
        \widehat\Phi_\pm(x',r)
        -
        \widehat\Phi_\pm(x',s)
    \bigr).
\end{equation}
 
\begin{lemma}[Descent of the normal entropy flux]
\label{lem:flat-entropy-flux-descends}
For every \(x'\) and \(p,q\in K\),
\begin{equation}
\label{eq:flat-entropy-flux-descends}
\begin{aligned}
&\operatorname{sgn}(p-q)
\bigl(
    \Phi_\pm(x',p)-\Phi_\pm(x',q)
\bigr)
\\
&\qquad=
Q_\pm
\bigl(
    x';P_\pm(x',p),P_\pm(x',q)
\bigr).
\end{aligned}
\end{equation}
\end{lemma}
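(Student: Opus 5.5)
The plan is to reduce the identity entirely to two facts already established: the factorization \eqref{eq:flat-reduced-boundary-fluxes}, $\Phi_\pm(x',\lambda)=\widehat\Phi_\pm(x',P_\pm(x',\lambda))$, and the flat-interval characterization \eqref{eq:quotient-flat-equivalence} of Lemma~\ref{lem:factorization-through-quotient}, applied to the boundary quotient $P_\pm(x',\cdot)$. This quotient is of the same form $\int_{-a}^\lambda|\partial_s\Phi_\pm(x',s)|^2\,ds$ with $\Phi_\pm(x',\cdot)\in C^1(K)$. The first step is to fix $x'$ and $p,q\in K$ and to rewrite the flux difference on the left-hand side using the factorization:
\[
    \Phi_\pm(x',p)-\Phi_\pm(x',q)
    =
    \widehat\Phi_\pm\bigl(x',P_\pm(x',p)\bigr)-\widehat\Phi_\pm\bigl(x',P_\pm(x',q)\bigr).
\]
After this, it only remains to compare the sign factors $\operatorname{sgn}(p-q)$ and $\operatorname{sgn}\bigl(P_\pm(x',p)-P_\pm(x',q)\bigr)$. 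Throughout, I use the convention $\operatorname{sgn}(0)=0$.

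I will split into two cases. In the first case, $P_\pm(x',p)=P_\pm(x',q)$. Then the right-hand side of \eqref{eq:flat-entropy-flux-descends} vanishes, because its sign factor is zero. For the left-hand side: if $p=q$ it is trivially zero. If $p\neq q$, say $q<p$ by symmetry, then \eqref{eq:quotient-flat-equivalence} shows that $\Phi_\pm(x',\cdot)$ is constant on $[q,p]$, so the flux difference vanishes. Either way both sides are zero.

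In the second case, $P_\pm(x',p)\neq P_\pm(x',q)$. This forces $p\neq q$. Since $P_\pm(x',\cdot)$ is nondecreasing, being the integral of a nonnegative function, $p>q$ gives $P_\pm(x',p)\ge P_\pm(x',q)$, hence the strict inequality. Symmetrically, $p<q$ gives $P_\pm(x',p)<P_\pm(x',q)$. Thus the two sign factors coincide, and together with the rewritten flux difference this yields \eqref{eq:flat-entropy-flux-descends}.

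There is no genuine obstacle here. The only point needing care is the degenerate case, where a collapsed fibre of $P_\pm$ must correspond to a zero flux increment. That is exactly the content of \eqref{eq:quotient-flat-equivalence}, together with the monotonicity of the quotient, which guarantees that a nonzero quotient increment has the correct sign.
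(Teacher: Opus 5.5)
Your proposal is correct and follows the paper's proof essentially verbatim. It uses the same two cases: coinciding quotient values, where the fibrewise flat-interval equivalence \eqref{eq:quotient-flat-equivalence} makes both sides vanish, and distinct quotient values, where monotonicity of $P_\pm(x',\cdot)$ preserves the sign and the factorization \eqref{eq:flat-reduced-boundary-fluxes} gives the identity.
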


\begin{proof}
If the projected values coincide, \(\Phi_\pm(x',\cdot)\) is constant on the
interval between \(p\) and \(q\), and both sides vanish. Otherwise,
monotonicity of \(P_\pm(x',\cdot)\) preserves the sign of the difference,
and the identity follows from
\eqref{eq:flat-reduced-boundary-fluxes}.
\end{proof}

For each \(x'\), let
\[
    \mathcal G_{\mathrm{VV}}^{\mathrm{phys}}(x')
    \subset K^2
\]
denote the standard vanishing-viscosity germ associated with the ordered
normal fluxes \(\Phi_-(x',\cdot)\) and \(\Phi_+(x',\cdot)\). We use the
explicit characterization from \cite[formula~(42)]{AM}; equivalently, it is
the maximal \(L^1\)-dissipative germ generated by the corresponding standing
viscous profiles, in the sense of \cite[Section~5]{AKR}.

Define its image in the quotient variables by
\begin{equation}
\label{eq:flat-projected-vv-germ}
    \mathcal G_{\mathrm{vv}}(x')
    :=
    \left\{
        \bigl(P_-(x',p^-),P_+(x',p^+)\bigr):
        (p^-,p^+)
        \in
        \mathcal G_{\mathrm{VV}}^{\mathrm{phys}}(x')
    \right\}.
\end{equation}

\begin{proposition}[Maximal \(L^1\)-dissipativity of the projected germ]
\label{prop:flat-projected-germ-maximal}
For every \(x'\), the germ \(\mathcal G_{\mathrm{vv}}(x')\) is maximal
\(L^1\)-dissipative for the reduced normal fluxes
\(\widehat\Phi_-\) and \(\widehat\Phi_+\). In particular, every
\((r^-,r^+)\in\mathcal G_{\mathrm{vv}}(x')\) satisfies
\begin{equation}
\label{eq:flat-reduced-rh}
    \widehat\Phi_-(x',r^-)
    =
    \widehat\Phi_+(x',r^+),
\end{equation}
and, for all two pairs in the germ,
\begin{equation}
\label{eq:flat-germ-dissipativity}
    Q_+(x';r^+,s^+)
    -
    Q_-(x';r^-,s^-)
    \leq0.
\end{equation}
\end{proposition}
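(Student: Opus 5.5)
The plan is to transport every property of the physical germ \(\mathcal G_{\mathrm{VV}}^{\mathrm{phys}}(x')\) through the monotone quotient maps \(P_\pm(x',\cdot)\), using Lemma~\ref{lem:flat-entropy-flux-descends} as the dictionary between physical and reduced entropy fluxes. Throughout, \(x'\) is fixed and suppressed.

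The first step is the reduced Rankine--Hugoniot relation and dissipativity. Let \((r^-,r^+)=(P_-(p^-),P_+(p^+))\) with \((p^-,p^+)\) in the physical germ. Then \eqref{eq:flat-reduced-boundary-fluxes} gives \(\widehat\Phi_-(r^-)=\Phi_-(p^-)=\Phi_+(p^+)=\widehat\Phi_+(r^+)\), since every pair in the physical germ satisfies the physical Rankine--Hugoniot relation. This proves \eqref{eq:flat-reduced-rh}. For dissipativity, take two pairs \((r^\pm)\), \((s^\pm)\) in \(\mathcal G_{\mathrm{vv}}\) with preimages \((p^\pm)\), \((q^\pm)\). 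Lemma~\ref{lem:flat-entropy-flux-descends} rewrites \(Q_+(r^+,s^+)-Q_-(r^-,s^-)\) as the physical quantity \(\operatorname{sgn}(p^+-q^+)(\Phi_+(p^+)-\Phi_+(q^+))-\operatorname{sgn}(p^--q^-)(\Phi_-(p^-)-\Phi_-(q^-))\). This is \(\le0\) by \(L^1\)-dissipativity of the physical germ \cite{AKR,AM}. The value of \(Q_\pm\) on reduced arguments does not depend on the chosen preimages, so this establishes \eqref{eq:flat-germ-dissipativity}.

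The second step is maximality. Suppose \((r^-,r^+)\in P_-(K)\times P_+(K)\) satisfies \(Q_+(r^+,s^+)-Q_-(r^-,s^-)\le0\) for all \((s^-,s^+)\in\mathcal G_{\mathrm{vv}}\). Choose arbitrary preimages \(p^\pm\in K\) with \(P_\pm(p^\pm)=r^\pm\). Using Lemma~\ref{lem:flat-entropy-flux-descends} again, \((p^-,p^+)\) is dissipative against every pair of the physical germ, since every physical pair projects into \(\mathcal G_{\mathrm{vv}}\). Maximality of \(\mathcal G_{\mathrm{VV}}^{\mathrm{phys}}\) then gives \((p^-,p^+)\in\mathcal G_{\mathrm{VV}}^{\mathrm{phys}}\), hence \((r^-,r^+)\in\mathcal G_{\mathrm{vv}}\).

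One subtlety needs checking. Some authors phrase \(L^1\)-dissipativity to include the Rankine--Hugoniot condition on the tested pair, or test against a germ closure. In that formulation the candidate must first be shown to satisfy \(\widehat\Phi_-(r^-)=\widehat\Phi_+(r^+)\), and this transfers directly via \eqref{eq:flat-reduced-boundary-fluxes}. Closedness is also preserved: \(P_\pm\) are continuous and \(K\) is compact, so the image of a closed germ is closed.

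The main obstacle I expect is precisely this maximality transfer. The issue is that the physical germ definition in \cite[formula~(42)]{AM} is stated for genuinely distinct states, and the quotient identifies states across flat intervals, so the argument must not depend on which preimage is selected. I would handle this by emphasizing that both the dissipativity inequality and the Rankine--Hugoniot relation involve the states only through \(\operatorname{sgn}\)-weighted flux differences. By Lemma~\ref{lem:flat-entropy-flux-descends} these are constant on fibres of \(P_\pm\). Consequently the physical germ is saturated with respect to the fibres: if \((p^-,p^+)\) belongs to it, then so does every pair \((\tilde p^-,\tilde p^+)\) in the same fibres, again by maximality. This makes \(\mathcal G_{\mathrm{vv}}\) a faithful quotient, and the maximality argument above independent of the choice of preimages.
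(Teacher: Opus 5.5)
Your proposal is correct and follows the paper's proof essentially step for step. The reduced Rankine--Hugoniot relation and the dissipativity inequality are pulled back from the physical germ via \eqref{eq:flat-reduced-boundary-fluxes} and Lemma~\ref{lem:flat-entropy-flux-descends}. Maximality is obtained by lifting a candidate pair to arbitrary preimages and invoking maximality of $\mathcal G_{\mathrm{VV}}^{\mathrm{phys}}(x')$; the candidate satisfies the reduced Rankine--Hugoniot relation by assumption, as an element of a germ, rather than by proof. Your extra fibre-saturation and closedness remarks are valid but unnecessary, since the descent lemma already makes the argument independent of the chosen lifts.
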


\begin{proof}
The reduced Rankine--Hugoniot relation follows immediately from the physical
one and the factorizations
\eqref{eq:flat-reduced-boundary-fluxes}. Lemma
\ref{lem:flat-entropy-flux-descends} transfers \(L^1\)-dissipativity from the
physical germ to its projected image.

To prove maximality, let \((r^-,r^+)\) satisfy the reduced
Rankine--Hugoniot relation and be \(L^1\)-dissipative with every element of
\(\mathcal G_{\mathrm{vv}}(x')\). Choose lifts \(p^\pm\in K\) such that
\[
    P_-(x',p^-)=r^-,
    \qquad
    P_+(x',p^+)=r^+.
\]
The reduced Rankine--Hugoniot relation implies
\(\Phi_-(x',p^-)=\Phi_+(x',p^+)\). By
Lemma~\ref{lem:flat-entropy-flux-descends}, the pair \((p^-,p^+)\) is
\(L^1\)-dissipative with every element of
\(\mathcal G_{\mathrm{VV}}^{\mathrm{phys}}(x')\). Maximality of the
physical germ gives
\((p^-,p^+)\in\mathcal G_{\mathrm{VV}}^{\mathrm{phys}}(x')\), and therefore
\((r^-,r^+)\in\mathcal G_{\mathrm{vv}}(x')\).
\end{proof}

\subsection{Selection of the projected germ}
\label{subsec:flat-vv-germ-selection}

Let \(\mathcal G_{\mathrm{VV}}^{o,\mathrm{phys}}(x')\) denote the elementary
standing-wave germ associated with the frozen normal fluxes. For every
\((p^-,p^+)\in\mathcal G_{\mathrm{VV}}^{o,\mathrm{phys}}(x')\), let
\(R^\varepsilon=R^\varepsilon(x_1)\) be a corresponding frozen standing
profile. It satisfies
\begin{equation}
\label{eq:flat-frozen-profile-convergence}
    R^\varepsilon
    \longrightarrow
    p^-H(-x_1)+p^+H(x_1)
    \quad
    \text{a.e. and in }L^1_{\mathrm{loc}}(\mathbb R).
\end{equation}

For interior quotient values, set
\begin{equation}
\label{eq:flat-interior-reduced-normal-entropy-fluxes}
    Q_\pm(x;r,s)
    :=
    \operatorname{sgn}(r-s)
    \bigl(
        \widehat N_\pm(x,r)-\widehat N_\pm(x,s)
    \bigr).
\end{equation}
As in Lemma~\ref{lem:flat-entropy-flux-descends},
\begin{equation}
\label{eq:flat-interior-entropy-flux-descends}
\begin{aligned}
&\operatorname{sgn}(p-q)
\bigl(N_\pm(x,p)-N_\pm(x,q)\bigr)
\\
&\qquad=
Q_\pm
\bigl(x;\Pi^\pm(x,p),\Pi^\pm(x,q)\bigr).
\end{aligned}
\end{equation}

\begin{proposition}[Selection of the projected germ]
\label{prop:flat-projected-germ-selection}
The one-sided quotient traces selected by the pure-viscosity approximation
satisfy
\begin{equation}
\label{eq:flat-selected-projected-germ}
    \bigl(z^-_\Gamma(t,x'),z^+_\Gamma(t,x')\bigr)
    \in
    \mathcal G_{\mathrm{vv}}(x')
    \qquad
    \text{for a.e. }(t,x').
\end{equation}
\end{proposition}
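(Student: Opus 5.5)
Our plan is to follow the localized germ-selection argument of \cite{AKR,AM}, with the physical one-sided traces replaced by the quotient traces $z^\pm_\Gamma$. By Proposition~\ref{prop:flat-projected-germ-maximal}, $\mathcal G_{\mathrm{vv}}(x')$ is maximal $L^1$-dissipative for $(\widehat\Phi_-,\widehat\Phi_+)$. It therefore suffices to prove two facts for a.e.\ $(t,x')$. First, the pair $(z^-_\Gamma,z^+_\Gamma)$ satisfies the reduced Rankine--Hugoniot relation. Second, for every $(p^-,p^+)$ in the elementary germ $\mathcal G_{\mathrm{VV}}^{o,\mathrm{phys}}(x')$, with projected values $r^\pm=P_\pm(x',p^\pm)$,
\[
    Q_+\bigl(x';z^+_\Gamma,r^+\bigr)-Q_-\bigl(x';z^-_\Gamma,r^-\bigr)\le0 .
\]
The elementary germ generates the full vanishing-viscosity germ (its closure and dual coincide with $\mathcal G_{\mathrm{VV}}^{\mathrm{phys}}$ by \cite[Section~5]{AKR}), and Lemma~\ref{lem:flat-entropy-flux-descends} transports duality through $P_\pm$. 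Maximality then yields \eqref{eq:flat-selected-projected-germ}.

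For the Rankine--Hugoniot relation, we pass to the limit in the weak formulation of \eqref{eq:flat-viscous-problem}. The tangential and time fluxes are handled as follows: the Young measure $\nu_{t,x}$ of $u^\varepsilon$ gives weak limits of $F_j(x,u^\varepsilon)$ and of $u^\varepsilon$. Corollary~\ref{cor:flat-normal-flux-convergence} gives strong convergence of the normal flux to $\widehat N_\pm(x,z_\pm)$. We test with $\varphi(t,x)\,\omega_h(x_1)$, where $\omega_h$ is a tent concentrating on $\Gamma$, and let $h\to0$. The time and tangential terms vanish by boundedness. The normal term yields
\[
    \widehat N_+(y,x',z_+)-\widehat N_-(-y,x',z_-)
\]
averaged over $|y|<h$. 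Combining the strong traces of Proposition~\ref{prop:flat-projected-traces} with \eqref{eq:flat-quotient-boundary-convergence} and the continuity of $\widehat N_\pm$ in both arguments, this converges to $\widehat\Phi_+(x',z^+_\Gamma)-\widehat\Phi_-(x',z^-_\Gamma)$. One subtlety needs checking: joint continuity of $(x,r)\mapsto\widehat N(x,r)$ up to $\Gamma$. We would obtain it from uniform convergence of the quotient maps together with equicontinuity of $N$.

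For the dissipativity inequality, we use a localized Kato comparison between $u^\varepsilon$ and the frozen standing profile $R^\varepsilon(x_1)$ associated with $(p^-,p^+)$. The profile solves the viscous equation with the normal flux frozen at $x'$, so in a neighbourhood of size $h$ of a point $(t_0,x_0')$ it is an approximate solution. The error comes from the $x$-dependence of $f,g$ and from the tangential fluxes, and is $O(h)$ in $L^1$ after localization. The Kato inequality for the two parabolic solutions has a nonnegative viscous term; we drop it. We then test with $\varphi(t,x')\,\omega_h(x_1)$. As $\varepsilon\to0$, $R^\varepsilon\to p^\mp$ on each side by \eqref{eq:flat-frozen-profile-convergence}, so the normal part $\operatorname{sgn}(u^\varepsilon-R^\varepsilon)(N_\pm(x,u^\varepsilon)-N_\pm(x,R^\varepsilon))$ equals, by \eqref{eq:flat-interior-entropy-flux-descends}, $Q_\pm(x;z^\varepsilon_\pm,\Pi^\pm(x,R^\varepsilon))$. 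This converges strongly away from $\Gamma$ because $z^\varepsilon_\pm$ converges strongly. Letting $h\to0$ and using the traces gives the desired sign. The tangential terms involve only $\partial_{x_j}\varphi$ and bounded weak limits, so they vanish as $h\to0$.

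We expect the main obstacle to be the interior layer $|x_1|<h$. There only the combination of $u^\varepsilon$ and $R^\varepsilon$ is controlled, and $\partial_{x_1}u^\varepsilon$ has no uniform bound. We would first try the standard device from \cite{AKR,AM} of taking $\omega_h$ independent of $\varepsilon$ and sending $\varepsilon\to0$ before $h\to0$. The layer term $\omega_h'$ then only sees limits on $\{|x_1|\in(h/2,h)\}$, strictly away from $\Gamma$, where the quotient convergence holds. The transmission condition \eqref{eq:flat-viscous-transmission}, together with the continuity of $u^\varepsilon$ and $R^\varepsilon$ across $\Gamma$, guarantees that no singular interface term appears in the viscous Kato inequality.
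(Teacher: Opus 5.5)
Your proposal is correct and follows essentially the same route as the paper: you obtain the reduced Rankine--Hugoniot relation from a shrinking interface cutoff, you compare with frozen standing profiles via a Kato inequality in which the normal entropy flux is rewritten as $Q_\pm(x;z^\varepsilon_\pm,\Pi^\pm(x,R^\varepsilon))$ and the cutoff derivative only sees regions where the quotients converge, and you then lift through the descent lemma to the physical germ, viewed as the dual of the elementary standing-wave germ. One small correction: $R^\varepsilon$ satisfies the transmission condition only for the fluxes frozen at $x_0'$, so the viscous Kato inequality does carry a singular term on $\Gamma$ of size $O(|x'-x_0'|)$ (this is the paper's localized frozen-coefficient error), and you should therefore keep the strip width and the $x'$-localization scale separate, sending the strip width to zero before shrinking $\varphi$ to a Lebesgue point of $z^\pm_\Gamma$.
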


\begin{proof}
Fix a common Lebesgue point \((t_0,x'_0)\) of the quotient traces and the
one-sided boundary coefficients. Let
\((p^-,p^+)\in\mathcal G_{\mathrm{VV}}^{o,\mathrm{phys}}(x'_0)\) and let
\(R^\varepsilon\) be the corresponding frozen profile.

The local Kato comparison between \(u^\varepsilon\) and \(R^\varepsilon\),
with the coefficients of the profile frozen at \((0,x'_0)\), gives
\begin{equation}
\label{eq:flat-local-kato-with-profile}
\begin{aligned}
&\partial_t|u^\varepsilon-R^\varepsilon|
+
\operatorname{div}_x
\left[
    \operatorname{sgn}(u^\varepsilon-R^\varepsilon)
    \bigl(F(x,u^\varepsilon)-F(x,R^\varepsilon)\bigr)
\right]
\\
&\qquad\leq
    \rho^\varepsilon
    +
    \varepsilon\Delta_x|u^\varepsilon-R^\varepsilon|,
\end{aligned}
\end{equation}
where \(\rho^\varepsilon\) has the localized smallness property established
in \cite[Lemmas~6--7]{AM}. On either side of the interface, its normal
entropy flux is
\begin{equation}
\label{eq:flat-profile-flux-quotient}
\begin{aligned}
&\operatorname{sgn}(u^\varepsilon-R^\varepsilon)
\bigl(
    N_\pm(x,u^\varepsilon)-N_\pm(x,R^\varepsilon)
\bigr)
\\
&\qquad=
Q_\pm
\left(
    x;
    z_\pm^\varepsilon,
    \Pi^\pm(x,R^\varepsilon)
\right).
\end{aligned}
\end{equation}
Away from \(\Gamma\), both quotient arguments converge strongly.

Test \eqref{eq:flat-local-kato-with-profile} with
\[
    \eta_h(t,x)=\varphi(t,x')\mu_h(x_1),
\]
where \(\varphi\geq0\) is supported near \((t_0,x'_0)\) and \(\mu_h\) is
the standard interface cutoff. Using the two-strip argument, first let
\(\varepsilon\downarrow0\) away from a narrower strip, then let the width of
that strip tend to zero, and finally let \(h\downarrow0\). The time and
tangential terms vanish with \(h\); the viscous term is controlled by
\eqref{eq:flat-local-viscous-energy}; and the frozen-coefficient error
vanishes after localization. The normal term gives
\begin{equation}
\label{eq:flat-compatibility-elementary-profile}
\begin{aligned}
&Q_+
\left(
    x'_0;
    z^+_\Gamma(t_0,x'_0),
    P_+(x'_0,p^+)
\right)
\\
&\qquad-
Q_-
\left(
    x'_0;
    z^-_\Gamma(t_0,x'_0),
    P_-(x'_0,p^-)
\right)
\leq0.
\end{aligned}
\end{equation}

Testing the viscous equation itself with the same shrinking-interface cutoff
and using Corollary~\ref{cor:flat-normal-flux-convergence} and
Proposition~\ref{prop:flat-projected-traces} yields the reduced
Rankine--Hugoniot relation
\begin{equation}
\label{eq:flat-selected-reduced-rh}
    \widehat\Phi_-
    \bigl(x'_0,z^-_\Gamma(t_0,x'_0)\bigr)
    =
    \widehat\Phi_+
    \bigl(x'_0,z^+_\Gamma(t_0,x'_0)\bigr).
\end{equation}

Choose lifts \(\bar p^\pm\in K\) of the selected quotient values. By
\eqref{eq:flat-selected-reduced-rh}, they satisfy the physical
Rankine--Hugoniot relation. Identity
\eqref{eq:flat-entropy-flux-descends} turns
\eqref{eq:flat-compatibility-elementary-profile} into the physical
\(L^1\)-dissipativity inequality between \((\bar p^-,\bar p^+)\) and every
elementary standing-wave pair. The characterization of the physical
vanishing-viscosity germ as the maximal germ generated by these profiles
therefore gives
\[
    (\bar p^-,\bar p^+)
    \in
    \mathcal G_{\mathrm{VV}}^{\mathrm{phys}}(x'_0).
\]
Projecting this pair proves
\eqref{eq:flat-selected-projected-germ}.
\end{proof}

\subsection{Young-measure reduction and strong convergence}
\label{subsec:flat-young-measure}

Strong compactness of the quotient variables does not by itself imply strong
compactness of \(u^\varepsilon\): oscillations may remain inside quotient
fibres. We therefore retain the Young measure generated by the viscous
sequence until the contraction argument shows that it is a Dirac mass.

After passing to a further subsequence, there exists a measurable family
\(\nu_{t,x}\in\mathcal P(K)\) such that, for every \(\psi\in C(K)\),
\begin{equation}
\label{eq:flat-young-measure-generation}
    \psi(u^\varepsilon)
    \xrightharpoonup{\ast}
    \langle\nu_{t,x},\psi\rangle
    :=
    \int_K\psi(\lambda)\,d\nu_{t,x}(\lambda)
\end{equation}
in \(L^\infty_{\mathrm{loc}}\). Set \(F_-:=f\) and \(F_+:=g\).

\begin{lemma}[Concentration on quotient fibres]
\label{lem:flat-young-concentration}
For a.e. \((t,x)\in(0,\infty)\times\Omega_\pm\),
\begin{equation}
\label{eq:flat-young-concentration}
    \nu_{t,x}
    \left(
        \left\{
            \lambda\in K:
            \Pi^\pm(x,\lambda)=z_\pm(t,x)
        \right\}
    \right)
    =1.
\end{equation}
\end{lemma}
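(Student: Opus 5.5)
We prove the concentration property by combining the strong convergence of the quotient variables with the fundamental theorem of Young measures, applied to a continuous test function of the pair \((x,\lambda)\). Fix the side \(\pm\) and a compact set \(Q\Subset(0,\infty)\times\Omega_\pm\). By \eqref{eq:flat-projecteds-limit} we already have \(z_\pm^\varepsilon=\Pi^\pm(x,u^\varepsilon)\to z_\pm\) strongly in \(L^1(Q)\). Since \(\Pi^\pm\) is continuous on \(\overline{\Omega_\pm}\times K\) and \(u^\varepsilon\) takes values in \(K\), the quantity \(\Pi^\pm(x,u^\varepsilon)\) is a Carathéodory function of \(u^\varepsilon\), with explicit dependence on \(x\). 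The generation property \eqref{eq:flat-young-measure-generation} extends in the standard way to such integrands. One approximates \(\psi(x,\lambda)\) uniformly on \(\overline Q\times K\) by finite sums \(\sum_k\chi_k(x)\psi_k(\lambda)\), using Stone--Weierstrass, and uses the uniform bound on the errors. This gives
\[
    \psi(x,u^\varepsilon)\xrightharpoonup{\ast}\int_K\psi(x,\lambda)\,d\nu_{t,x}(\lambda)
\]
for every \(\psi\in C(\overline Q\times K)\).

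The key step is to choose
\[
    \psi(x,\lambda):=\bigl|\Pi^\pm(x,\lambda)-\zeta(t,x)\bigr|,
\]
but \(z_\pm\) is only measurable, so we handle this in two stages. Write
\[
    |\Pi^\pm(x,u^\varepsilon)-z_\pm|\le|z_\pm^\varepsilon-z_\pm|,
\]
and note that the right-hand side tends to \(0\) in \(L^1(Q)\). On the other side, approximate \(z_\pm\) in \(L^1(Q)\) by continuous functions \(\zeta_k\), all with values in the bounded range of \(\Pi^\pm\). For each fixed \(k\), the Carathéodory extension applies to \(|\Pi^\pm(x,\lambda)-\zeta_k(t,x)|\). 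For nonnegative \(\phi\in C_c(Q)\) this gives
\[
    \int_Q\phi\int_K|\Pi^\pm(x,\lambda)-\zeta_k|\,d\nu_{t,x}\,dt\,dx
    =\lim_{\varepsilon}\int_Q\phi\,|z^\varepsilon_\pm-\zeta_k|
    =\int_Q\phi\,|z_\pm-\zeta_k|.
\]
Now let \(k\to\infty\). The integrand converges to \(|\Pi^\pm(x,\lambda)-z_\pm|\) in \(L^1(Q;L^1(\nu_{t,x}))\), because \(\nu_{t,x}\) is a probability measure and
\[
    \bigl||a-\zeta_k|-|a-z_\pm|\bigr|\le|\zeta_k-z_\pm|.
\]
We therefore obtain
\[
    \int_Q\phi(t,x)\int_K|\Pi^\pm(x,\lambda)-z_\pm(t,x)|\,d\nu_{t,x}(\lambda)\,dt\,dx=0.
\]

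Since \(\phi\ge0\) is arbitrary, the inner integral vanishes for a.e. \((t,x)\in Q\). Its integrand is nonnegative and continuous in \(\lambda\), so \(\nu_{t,x}\) is concentrated on the zero set \(\{\lambda:\Pi^\pm(x,\lambda)=z_\pm(t,x)\}\). This zero set is closed, in fact a closed interval (a quotient fibre), because \(\Pi^\pm(x,\cdot)\) is continuous and nondecreasing. Exhausting \((0,\infty)\times\Omega_\pm\) by countably many compact sets \(Q\) gives \eqref{eq:flat-young-concentration}.

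The only delicate point is the passage from continuous-in-\(\lambda\) test functions to integrands that also depend on \((t,x)\), namely the Carathéodory extension and the approximation of the measurable limit \(z_\pm\). Both are handled by the uniform bounds above, since everything takes values in the compact set \(K\) and \(\Pi^\pm\) is uniformly continuous on \(\overline Q\times K\).
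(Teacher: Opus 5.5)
Your proof is correct and follows the same route as the paper. The paper's proof is the single observation that strong \(L^1_{\mathrm{loc}}\)-convergence of \(\Pi^\pm(x,u^\varepsilon)\) to \(z_\pm\) forces \(\int_K|\Pi^\pm(x,\lambda)-z_\pm(t,x)|\,d\nu_{t,x}(\lambda)=0\) a.e.; you supply the standard details it leaves implicit, namely extending Young-measure generation to integrands depending continuously on \((t,x,\lambda)\) and approximating the merely measurable limit \(z_\pm\) by continuous functions.
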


\begin{proof}
Strong convergence of \(\Pi^\pm(x,u^\varepsilon)\) gives
\[
    \int_K
        |\Pi^\pm(x,\lambda)-z_\pm(t,x)|
    \,d\nu_{t,x}(\lambda)
    =0,
\]
which proves the assertion.
\end{proof}

In particular,
\begin{equation}
\label{eq:flat-normal-flux-young}
    N_\pm(x,\lambda)
    =
    \widehat N_\pm(x,z_\pm(t,x))
    \qquad
    \text{for }\nu_{t,x}\text{-a.e. }\lambda.
\end{equation}
Passing to the limit in the viscous entropy inequalities gives, on each
half-space and for every \(k\in K\),
\begin{equation}
\label{eq:flat-young-entropy}
\begin{aligned}
&\partial_t
    \langle\nu_{t,x},|\lambda-k|\rangle
\\
&\quad+
\operatorname{div}_x
\left\langle
    \nu_{t,x},
    \operatorname{sgn}(\lambda-k)
    \bigl(F_\pm(x,\lambda)-F_\pm(x,k)\bigr)
\right\rangle
\\
&\quad+
\left\langle
    \nu_{t,x},
    \operatorname{sgn}(\lambda-k)
\right\rangle
\operatorname{div}_xF_\pm(x,k)
\leq0
\end{aligned}
\end{equation}
in \(\mathcal D'((0,\infty)\times\Omega_\pm)\), with initial Young
measure \(\nu_{0,x}=\delta_{u_0(x)}\).

\begin{lemma}[Reduction of entropy Young measures]
\label{lem:flat-young-measure-reduction}
Let \(\nu_{t,x}\) and \(\sigma_{t,x}\) be two entropy Young-measure
solutions satisfying \eqref{eq:flat-young-entropy}, with deterministic initial
data \(u_0\) and \(v_0\). Let \(z_\pm\) and \(w_\pm\) be their quotient
variables. Assume that both measures are concentrated on the corresponding
quotient fibres, that these variables possess strong one-sided traces, and
that
\[
    (z^-_\Gamma,z^+_\Gamma),
    \qquad
    (w^-_\Gamma,w^+_\Gamma)
    \in
    \mathcal G_{\mathrm{vv}}(x')
\]
for a.e. \((t,x')\). Then, for every \(T,R>0\), there exist
\(\overline R>R\) and \(C_{T,R}>0\) such that
\begin{equation}
\label{eq:flat-young-measure-stability}
\begin{aligned}
&\operatorname*{ess\,sup}_{t\in(0,T)}
\int_{B_R}\int_K\int_K
    |\lambda-\mu|
\,d\nu_{t,x}(\lambda)
\,d\sigma_{t,x}(\mu)
\,dx
\\
&\qquad\leq
C_{T,R}
\int_{B_{\overline R}}
    |u_0(x)-v_0(x)|\,dx.
\end{aligned}
\end{equation}
In particular, an entropy Young-measure solution with deterministic initial
data is concentrated at a single state.
\end{lemma}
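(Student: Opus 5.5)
We will prove the estimate by the doubling-of-variables method for measure-valued solutions (DiPerna, Szepessy), carried out on each half-space, followed by an interface argument in which only the quotient traces are used.

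\emph{Step 1: interior Kato inequality.} On $(0,\infty)\times\Omega_\pm$ we double variables in \eqref{eq:flat-young-entropy} for $\nu$ (with $k=\mu$) and for $\sigma$ (with $k=\lambda$). We integrate against $\sigma$ and $\nu$ respectively and let the doubling mollifier shrink. Because the fluxes are $C^2$ in $x$ up to $\Gamma$, the source terms $\operatorname{div}_xF_\pm(x,k)$ cancel in the limit. This gives
\[
\partial_t\langle\nu\otimes\sigma,|\lambda-\mu|\rangle
+\operatorname{div}_x\langle\nu\otimes\sigma,\operatorname{sgn}(\lambda-\mu)(F_\pm(x,\lambda)-F_\pm(x,\mu))\rangle\le0
\]
in $\mathcal D'((0,\infty)\times\Omega_\pm)$. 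The initial condition $\nu_{0,x}\otimes\sigma_{0,x}=\delta_{u_0}\otimes\delta_{v_0}$ enters through the usual weak-$*$ continuity in $t$ at $t=0$ of the measure-valued averages.

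\emph{Step 2: interface term.} We test the inequality of Step 1 with $\varphi(t,x)(1-\mu_h(x_1))$, where $\varphi\ge0$ and $\mu_h$ is the cutoff from the proof of Proposition~\ref{prop:flat-projected-germ-selection}, and let $h\downarrow0$. This produces a boundary term. Its normal part involves the product entropy flux
\[
\langle\nu\otimes\sigma,\operatorname{sgn}(\lambda-\mu)(N_\pm(x,\lambda)-N_\pm(x,\mu))\rangle .
\]
By Lemma~\ref{lem:flat-young-concentration} and \eqref{eq:flat-interior-entropy-flux-descends}, this integrand equals $Q_\pm(x;z_\pm,w_\pm)$ for $\nu\otimes\sigma$-a.e.\ pair. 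Indeed, $\lambda$ lies in the fibre of $z_\pm$ and $\mu$ in the fibre of $w_\pm$, so $\operatorname{sgn}(\lambda-\mu)$ agrees with $\operatorname{sgn}(z_\pm-w_\pm)$ whenever the two fibres differ. When they coincide, both sides vanish.

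The normal flux is therefore a deterministic function of the quotient variables. The strong traces of Proposition~\ref{prop:flat-projected-traces} (assumed here for $z_\pm,w_\pm$), together with the uniform convergence \eqref{eq:flat-quotient-boundary-convergence} of $\widehat N_\pm$ to $\widehat\Phi_\pm$ (we need continuity of $\widehat N$ jointly in $(x,r)$ near $\Gamma$), let us pass to the limit. The boundary contribution becomes
\[
\int\varphi\bigl(Q_+(x';z^+_\Gamma,w^+_\Gamma)-Q_-(x';z^-_\Gamma,w^-_\Gamma)\bigr)\,dx'\,dt .
\]
This is $\le0$ by \eqref{eq:flat-germ-dissipativity}. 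The time and tangential terms carry no $1/h$ factor and vanish as $h\to0$.

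\emph{Step 3: conclusion.} We obtain the global Kato inequality across $\Gamma$ and choose the standard cone test function $\varphi(t,x)=\chi(|x|+Lt)$, where $L$ bounds $|\partial_\lambda F|$ on $B_{\overline R}\times K$. A Gronwall argument then gives \eqref{eq:flat-young-measure-stability} with $\overline R=R+LT$. Taking $\sigma=\nu$ and $v_0=u_0$ yields $\int\!\!\int|\lambda-\mu|\,d\nu\,d\nu=0$, so $\nu_{t,x}$ is a Dirac mass.

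\emph{Main obstacle.} The hardest point is the interface limit in Step 2. The tangential and time entropy fluxes are \emph{not} functions of the quotient variables, and no traces for them are available. We must check that these terms do not survive as $h\to0$. This holds because they are multiplied only by $\mu_h$, not by $\mu_h'$, and they are bounded, so their contribution is $O(h)$. The normal term, which does see $\mu_h'$, is exactly the one that factors through the quotients. A second delicate point is justifying the strong convergence $Q_\pm(x;z_\pm,w_\pm)\to Q_\pm(x';z^\pm_\Gamma,w^\pm_\Gamma)$ in $L^1$. We handle it by combining the strong traces with the uniform continuity of $(x,r)\mapsto\widehat N_\pm(x,r)$, which is obtained from the uniform convergence of $\Pi^\pm$.
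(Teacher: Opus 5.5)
Your proposal is correct and follows essentially the same route as the paper. Both use DiPerna doubling on each half-space, identify the normal Kato flux with $Q_\pm(x;z_\pm,w_\pm)$ via fibre concentration and monotonicity of $\Pi^\pm(x,\cdot)$, pass to the interface using the strong quotient traces and \eqref{eq:flat-quotient-boundary-convergence}, take the sign from \eqref{eq:flat-germ-dissipativity}, and finish with cone cutoffs, Gronwall, and $\sigma=\nu$. Your remarks on why the time and tangential terms vanish as $h\downarrow0$, and on the joint continuity of $(x,r)\mapsto\widehat N_\pm(x,r)$, only make explicit details the paper leaves implicit. That joint continuity does hold, by compactness of $K$ and joint continuity of $\Pi^\pm$ and $N_\pm$.
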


\begin{proof}
The DiPerna doubling argument gives the Kato inequality separately in
\(\Omega_-\) and \(\Omega_+\). It remains to identify the interface term.
Concentration on quotient fibres implies, on either side,
\begin{equation}
\label{eq:flat-young-normal-kato-flux}
\begin{aligned}
&\int_K\int_K
    \operatorname{sgn}(\lambda-\mu)
    \bigl(N_\pm(x,\lambda)-N_\pm(x,\mu)\bigr)
\,d\nu_{t,x}(\lambda)
\,d\sigma_{t,x}(\mu)
\\
&\qquad=
\operatorname{sgn}(z_\pm-w_\pm)
\bigl(
    \widehat N_\pm(x,z_\pm)
    -
    \widehat N_\pm(x,w_\pm)
\bigr).
\end{aligned}
\end{equation}
Indeed, if the quotient values coincide, the normal fluxes coincide; if they
are ordered, monotonicity of \(\Pi^\pm(x,\cdot)\) orders every pair of states
in the two fibres in the same way.

Using the strong quotient traces and
\eqref{eq:flat-quotient-boundary-convergence}, the total interface
contribution is
\[
    Q_+(x';z^+_\Gamma,w^+_\Gamma)
    -
    Q_-(x';z^-_\Gamma,w^-_\Gamma),
\]
which is nonpositive by
\eqref{eq:flat-germ-dissipativity}. The standard local Kato argument,
finite-speed cutoffs, and Gronwall's inequality give
\eqref{eq:flat-young-measure-stability}.

Taking \(\sigma=\nu\) and \(v_0=u_0\), the right-hand side vanishes, so
\[
    \int_K\int_K|\lambda-\mu|
    \,d\nu_{t,x}(\lambda)
    \,d\nu_{t,x}(\mu)
    =0
\]
for a.e. \((t,x)\). Thus \(\nu_{t,x}\) is a Dirac mass.
\end{proof}

Consequently, there exists
\[
    u\in L^\infty
    \bigl((0,\infty)\times\mathbb R^d;K\bigr)
\]
such that \(\nu_{t,x}=\delta_{u(t,x)}\) a.e., and
\begin{equation}
\label{eq:flat-strong-convergence-state}
    u^\varepsilon
    \longrightarrow u
    \quad\text{strongly in }
    L^1_{\mathrm{loc}}
    \bigl((0,\infty)\times\mathbb R^d\bigr).
\end{equation}
All subsequences have the same limit by
Lemma~\ref{lem:flat-young-measure-reduction}; hence the whole viscous family
converges. In particular, the physical fluxes converge strongly on their
respective half-spaces.

\subsection{Admissible solutions and well-posedness}
\label{subsec:flat-admissible-wellposedness}

\begin{definition}[Admissible solution]
\label{def:flat-admissible-solution}
A function
\[
    u\in L^\infty
    \bigl((0,\infty)\times\mathbb R^d;K\bigr)
\]
is an admissible solution of \eqref{eq:flat-main} if the following conditions
hold.

\smallskip
\noindent
\emph{(i) Weak formulation.}
For every
\(\varphi\in C_c^\infty([0,\infty)\times\mathbb R^d)\),
\begin{align}
\label{eq:flat-weak-formulation}
&\int_0^\infty\int_{\mathbb R^d}
    u\,\partial_t\varphi\,dx\,dt
+
\int_0^\infty\int_{\Omega_-}
    f(x,u)\cdot\nabla_x\varphi\,dx\,dt
\nonumber\\
&\quad+
\int_0^\infty\int_{\Omega_+}
    g(x,u)\cdot\nabla_x\varphi\,dx\,dt
+
\int_{\mathbb R^d}u_0(x)\varphi(0,x)\,dx
=0.
\end{align}

\smallskip
\noindent
\emph{(ii) Entropy inequalities away from the interface.}
For every \(k\in K\),
\begin{equation}
\label{eq:flat-entropy-solution}
\begin{aligned}
    \partial_t|u-k|
    &+
    \operatorname{div}_x
    \left[
        \operatorname{sgn}(u-k)
        \bigl(F_\pm(x,u)-F_\pm(x,k)\bigr)
    \right]
\\
    &+
    \operatorname{sgn}(u-k)
    \operatorname{div}_xF_\pm(x,k)
    \leq0
\end{aligned}
\end{equation}
in \(\mathcal D'((0,\infty)\times\Omega_\pm)\), where
\(F_-=f\) and \(F_+=g\).

\smallskip
\noindent
\emph{(iii) Strong quotient traces.}
The functions \(\Pi^\pm(x,u(t,x))\) have strong one-sided traces
\(z^\pm_\Gamma\) in the sense of
\eqref{eq:flat-left-strong-projected-trace}--%
\eqref{eq:flat-right-strong-projected-trace}.

\smallskip
\noindent
\emph{(iv) Interface germ condition.}
For a.e. \((t,x')\),
\begin{equation}
\label{eq:flat-solution-germ}
    (z^-_\Gamma(t,x'),z^+_\Gamma(t,x'))
    \in
    \mathcal G_{\mathrm{vv}}(x').
\end{equation}

\smallskip
\noindent
\emph{(v) Initial trace.}
For every \(R>0\),
\begin{equation}
\label{eq:flat-initial-trace}
    \operatorname*{ess\,lim}_{t\downarrow0}
    \int_{B_R}|u(t,x)-u_0(x)|\,dx
    =0.
\end{equation}
\end{definition}

The germ condition implies the reduced Rankine--Hugoniot relation, but the
Rankine--Hugoniot relation alone does not characterize viscosity
admissibility.

\begin{theorem}[Existence, uniqueness, and local \(L^1\)-stability]
\label{thm:flat-wellposedness}
Assume \eqref{eq:flat-regularity}--\eqref{eq:flat-initial-bv}. Then the
pure-viscosity approximations \eqref{eq:flat-viscous-problem} converge
strongly in \(L^1_{\mathrm{loc}}\) to an admissible solution of
\eqref{eq:flat-main}.

This solution is unique. More precisely, if \(u\) and \(v\) are admissible
solutions with initial data \(u_0\) and \(v_0\), then, for every \(T,R>0\),
there exist \(\overline R>R\) and \(C_{T,R}>0\), depending only on the local
\(C^2\)-bounds of the fluxes in the relevant domain of dependence, such that
\begin{equation}
\label{eq:flat-final-stability}
    \operatorname*{ess\,sup}_{t\in(0,T)}
    \int_{B_R}|u(t,x)-v(t,x)|\,dx
    \leq
    C_{T,R}
    \int_{B_{\overline R}}|u_0(x)-v_0(x)|\,dx.
\end{equation}
\end{theorem}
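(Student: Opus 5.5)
The proof assembles the results of this section in two parts: existence via the viscous limit, and uniqueness with stability via a Kato inequality whose interface term is controlled by the germ.

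\emph{Existence.} By \eqref{eq:flat-strong-convergence-state}, the whole viscous family converges strongly in \(L^1_{\mathrm{loc}}\) to some \(u\in L^\infty((0,\infty)\times\mathbb R^d;K)\). We check the conditions of Definition~\ref{def:flat-admissible-solution} one by one.
\begin{itemize}
\item Condition (i): pass to the limit in the weak form of \eqref{eq:flat-viscous-problem}. The viscous term \(\varepsilon\int u^\varepsilon\Delta\varphi\) vanishes, and the flux terms converge on each half-space by strong convergence and continuity of \(f,g\). No interface term appears, because the weak formulation is global.
\item Condition (ii): pass to the limit in the viscous Kruzhkov inequalities on \(\Omega_\pm\). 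Equivalently, use \eqref{eq:flat-young-entropy} with \(\nu=\delta_u\).
\item Condition (iii): since \(\nu_{t,x}=\delta_{u}\), Lemma~\ref{lem:flat-young-concentration} gives \(z_\pm=\Pi^\pm(x,u)\). Proposition~\ref{prop:flat-projected-traces} then supplies the strong traces.
\item Condition (iv): this is Proposition~\ref{prop:flat-projected-germ-selection}.
\item Condition (v): use the uniform time-\(BV\) bound \eqref{eq:flat-tangential-bv-estimates}, extended down to \(t=0\). That extension holds because \(u_0^\varepsilon\) is bounded in \(BV_{\mathrm{loc}}\) and the difference-quotient argument runs from \(t=0\). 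It gives \(\int_{B_R}|u^\varepsilon(t)-u^\varepsilon_0|\,dx\le C_R t\) uniformly in \(\varepsilon\), up to an interface-layer contribution. Combined with \(u_0^\varepsilon\to u_0\) in \(L^1_{\mathrm{loc}}\), this yields the initial trace.
\end{itemize}
A few details still need care. The \(L^1\) time-continuity estimate near \(\Gamma\) must be taken with the total variation measure on \((0,T)\times Q_\pm\), where \(Q_\pm\) is closed up to \(\Gamma\). This is exactly what \eqref{eq:flat-tangential-bv-estimates} asserts, so no separate normal estimate is needed.

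\emph{Uniqueness and stability.} Given admissible \(u,v\), set \(\nu=\delta_u\) and \(\sigma=\delta_v\). These are entropy Young-measure solutions satisfying \eqref{eq:flat-young-entropy}, because (ii) is the Dirac case of that inequality. They are trivially concentrated on their quotient fibres, with quotient variables \(\Pi^\pm(x,u)\) and \(\Pi^\pm(x,v)\). Conditions (iii)--(iv) supply the strong traces and the germ membership. Lemma~\ref{lem:flat-young-measure-reduction} then gives \eqref{eq:flat-final-stability} directly, since the double integral reduces to \(\int_{B_R}|u-v|\,dx\).

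The interface step inside that lemma has three parts:
\begin{enumerate}
\item Apply the doubling argument on each half-space, where the fluxes are \(C^2\).
\item Test with \(\varphi(t,x)\bigl(1-\mu_h(x_1)\bigr)\), where \(\mu_h\) is the interface cutoff.
\item Let \(h\to0\).
\end{enumerate}
As \(h\to0\), the strong quotient traces, together with the factorization \eqref{eq:flat-interior-entropy-flux-descends} and the uniform convergence \eqref{eq:flat-quotient-boundary-convergence}, turn the boundary term into \(Q_+(x';z^+_\Gamma,w^+_\Gamma)-Q_-(x';z^-_\Gamma,w^-_\Gamma)\). This is nonpositive by Proposition~\ref{prop:flat-projected-germ-maximal}. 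For the initial term we need the trace of \(|u-v|\) at \(t=0\), which follows from (v).

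The local \(L^1\) estimate is obtained from a cone test function \(\varphi=\chi(|x|+Lt)\). Here \(L\) bounds \(|\partial_\lambda F|\) on \(B_{\overline R}\times K\), so \(\overline R=R+LT\). The source terms \(\operatorname{sgn}(u-v)\bigl(\operatorname{div}_xF_\pm(x,u)-\operatorname{div}_xF_\pm(x,v)\bigr)\) are bounded by \(C|u-v|\) via the \(C^2\) bounds, and Gronwall's inequality then yields a constant \(C_{T,R}=e^{CT}\).

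\emph{Main obstacle.} The key point is the trace passage in the interface term. The Kato flux involves the normal flux only through the quotient, by \eqref{eq:flat-interior-entropy-flux-descends}, but \(\widehat N_\pm(x,\cdot)\) depends on \(x\) and is only continuous. I would first establish that \(x\mapsto\widehat N_\pm(x,\cdot)\) is continuous up to \(\Gamma\), in the sense that \(\widehat N_\pm(x,\Pi^\pm(x,\cdot))\to\widehat\Phi_\pm(x',P_\pm(x',\cdot))\) uniformly. Writing \(Q_\pm\) back in physical states sidesteps this, since there \(N_\pm\) is \(C^1\) up to \(\Gamma\). What remains is a modulus of continuity for \(\widehat\Phi_\pm(x',\cdot)\), uniform on compacts, so that \(L^1\) convergence of quotients as \(x_1\to0\) gives \(L^1\) convergence of the reduced fluxes.

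Tangential flux terms in the strip vanish as \(h\to0\) because they are bounded. The normal trace of the full Kato flux exists, since it is a divergence-measure field. The normal trace of the quotient entropy flux, obtained from the strong quotient traces, identifies it as the expression above.
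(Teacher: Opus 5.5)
Your proposal is correct and follows the paper's proof. Existence comes from combining quotient compactness, strong quotient traces, germ selection and the Dirac reduction of the Young measure. Uniqueness and stability come from the doubled Kato inequality in the two half-spaces, whose interface term $Q_+(x';z^+_{u,\Gamma},z^+_{v,\Gamma})-Q_-(x';z^-_{u,\Gamma},z^-_{v,\Gamma})$ is nonpositive by dissipativity of $\mathcal G_{\mathrm{vv}}$; routing this through Lemma~\ref{lem:flat-young-measure-reduction} with $\nu=\delta_u$, $\sigma=\delta_v$ is the same argument. The uniform modulus you leave open in the trace passage is automatic. By Cauchy--Schwarz, $|\Phi_\pm(x',\mu)-\Phi_\pm(x',\lambda)|\le\sqrt{2a}\,|P_\pm(x',\mu)-P_\pm(x',\lambda)|^{1/2}$, so $\widehat\Phi_\pm(x',\cdot)$, and likewise $\widehat N_\pm(x,\cdot)$, is $\tfrac12$-H\"older with constant $\sqrt{2a}$ independent of the base point.
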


\begin{proof}
The maximum principle, quotient compactness, strong quotient traces, and germ
selection are given by
\eqref{eq:flat-viscous-bound},
Propositions~\ref{prop:flat-interior-quotient-compactness} and
\ref{prop:flat-projected-traces}, and
Proposition~\ref{prop:flat-projected-germ-selection}. The viscous sequence
generates an entropy Young measure, which is a Dirac mass by
Lemma~\ref{lem:flat-young-measure-reduction}. Hence
\(u^\varepsilon\to u\) strongly in \(L^1_{\mathrm{loc}}\), and passage to
the weak formulation, the one-sided entropy inequalities, and the initial
condition is standard. Continuity of the quotient maps identifies the
previously constructed quotient limits with \(\Pi^\pm(x,u)\), so the limit
is admissible.

For two admissible solutions, the doubled Kato inequality holds in the two
half-spaces. Its interface contribution is
\[
    Q_+(x';z^+_{u,\Gamma},z^+_{v,\Gamma})
    -
    Q_-(x';z^-_{u,\Gamma},z^-_{v,\Gamma}),
\]
which is nonpositive by \(L^1\)-dissipativity of
\(\mathcal G_{\mathrm{vv}}(x')\). Standard finite-speed cutoffs and
Gronwall's inequality give \eqref{eq:flat-final-stability}, and hence
uniqueness.
\end{proof}

\subsection{Extension beyond \(BV\) initial data}
\label{subsec:flat-extension-l1}

\begin{corollary}[Extension by \(L^1_{\mathrm{loc}}\)-closure]
\label{cor:flat-extension-l1}
Let
\[
    u_0\in
    L^1_{\mathrm{loc}}(\mathbb R^d)
    \cap L^\infty(\mathbb R^d),
    \qquad
    u_0(x)\in K
    \quad\text{for a.e. }x.
\]
Choose \(u_0^n\in BV(\mathbb R^d;K)\) such that
\[
    u_0^n\longrightarrow u_0
    \quad\text{in }L^1_{\mathrm{loc}}(\mathbb R^d),
\]
and let \(u^n\) be the corresponding admissible solutions. Then
\((u^n)\) converges in
\[
    L^\infty_{\mathrm{loc}}
    \bigl(
        [0,\infty);
        L^1_{\mathrm{loc}}(\mathbb R^d)
    \bigr)
\]
to a limit independent of the approximating sequence. This defines the
unique extension by continuity of the solution map, and the extended map
satisfies \eqref{eq:flat-final-stability}. For general initial data in this
class, admissibility is understood in this closure sense.
\end{corollary}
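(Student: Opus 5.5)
The plan is to use the local \(L^1\)-stability estimate \eqref{eq:flat-final-stability} of Theorem~\ref{thm:flat-wellposedness} as the only quantitative input, combined with a completeness argument. Fix \(T,R>0\) and let \(\overline R\) and \(C_{T,R}\) be as in \eqref{eq:flat-final-stability}. For the admissible solutions \(u^n,u^m\) with data \(u_0^n,u_0^m\in BV(\mathbb R^d;K)\), that estimate gives
\[
    \operatorname*{ess\,sup}_{t\in(0,T)}\int_{B_R}|u^n(t,x)-u^m(t,x)|\,dx
    \le C_{T,R}\int_{B_{\overline R}}|u_0^n-u_0^m|\,dx .
\]
Since \(u_0^n\to u_0\) in \(L^1_{\mathrm{loc}}\), the right-hand side tends to zero, so \((u^n)\) is Cauchy in \(L^\infty((0,T);L^1(B_R))\) for every \(T,R\). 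The space \(L^\infty_{\mathrm{loc}}([0,\infty);L^1_{\mathrm{loc}}(\mathbb R^d))\) is complete as a Fréchet space with the seminorms indexed by \((T,R)\). Hence there is a limit \(u\), and \(u(t,x)\in K\) a.e. because \(K\) is closed.

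Independence of the approximating sequence follows by interleaving. Given two sequences \((u_0^n)\) and \((\tilde u_0^n)\) converging to \(u_0\), the alternating sequence \(u_0^1,\tilde u_0^1,u_0^2,\tilde u_0^2,\ldots\) also converges to \(u_0\) in \(L^1_{\mathrm{loc}}\). Its solutions are therefore Cauchy, which forces the two limits to coincide. The same argument shows that the extended map agrees with the original one on \(BV(\mathbb R^d;K)\): take the constant sequence \(u_0^n=u_0\). Uniqueness of a continuous extension is then immediate from density of \(BV(\mathbb R^d;K)\) in the \(L^1_{\mathrm{loc}}\)-topology among \(K\)-valued data, which is realized by mollification.

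For the estimate \eqref{eq:flat-final-stability} on the extended map, take data \(u_0,v_0\) with approximating sequences \(u_0^n,v_0^n\). Apply \eqref{eq:flat-final-stability} to \(u^n,v^n\) and pass to the limit \(n\to\infty\). On the left the essential supremum is lower semicontinuous under convergence in \(L^\infty((0,T);L^1(B_R))\), and in fact the convergence is uniform in the norm. On the right the integrals converge.

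I expect the main subtlety to be ensuring that the constants \(\overline R\) and \(C_{T,R}\) are uniform along the sequence. This should hold because Theorem~\ref{thm:flat-wellposedness} states that they depend only on the local \(C^2\)-bounds of the fluxes in the domain of dependence, and not on the \(BV\) norms of the data. This is the point where the proof must rely on that stated dependence. Beyond this, admissibility for the limit is understood in the closure sense by definition, so no trace argument is needed for general data.
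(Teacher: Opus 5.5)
Your proposal is correct and follows the same route as the paper. The paper's proof simply applies \eqref{eq:flat-final-stability} to \(u^n\) and \(u^m\) to get the Cauchy property on every compact space--time cylinder, and notes that the same estimate gives independence of the approximation and stability of the limit; your interleaving argument, the passage to the limit in the estimate, and your remark that \(\overline R\) and \(C_{T,R}\) depend only on the flux bounds and not on the data are exactly the details that one-line proof leaves implicit (if \(BV\) is meant globally, density needs mollification followed by a cutoff, which is harmless since \(0\in K\)).
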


\begin{proof}
Apply \eqref{eq:flat-final-stability} to \(u^n\) and \(u^m\). It makes the
sequence Cauchy on every compact space--time cylinder and also proves
independence of the approximation and stability of the limit.
\end{proof}


\section{General interface geometry and localized vanishing viscosity}
\label{sec:general-interface}

We now extend the flat-interface theory of
Section~\ref{sec:flat-interface} to curved discontinuity hypersurfaces and,
more generally, to locally finite families of such hypersurfaces. The
geometric localization follows the flattening, radial-extension, and
cone-patching strategy of
\cite[Section~2]{Mitrovic2025Nondegenerate}. The local compactness and
interface-selection mechanisms are those developed in
Sections~\ref{sec:quasi-averaging-traces} and
\ref{sec:flat-interface}.

After flattening a regular interface, the relevant normal component is the
transformed co-normal flux. The quotient and the projected
vanishing-viscosity germ are formed from this component. A local problem is
then extended to a global flat-interface problem and solved by the semigroup
constructed in Section~\ref{sec:flat-interface}. Finite propagation shows
that the resulting local solution is independent of the auxiliary extension.
Compatibility on overlaps follows from an intrinsic formulation of the
projected germ in terms of the oriented unit normal to the physical
interface.

The local viscous problems used below are selection devices. In particular,
the Euclidean Laplacian in a flattened chart is not claimed to be the
coordinate transform of a single globally prescribed viscous operator in the
original variables. The intrinsic object common to all charts is the
vanishing-viscosity germ determined by the two one-sided physical normal
fluxes.

\subsection{Geometric setting}
\label{subsec:general-geometry}

Let
\[
    K=[-a,a]
\]
and consider
\begin{equation}
\label{eq:general-main}
    \partial_t u+\operatorname{div}_x\mathfrak f(x,u)=0,
    \qquad
    u(0,x)=u_0(x),
    \qquad
    (t,x)\in(0,\infty)\times\mathbb R^d.
\end{equation}

The discontinuity set is decomposed as
\begin{equation}
\label{eq:general-interface-decomposition}
    \Gamma
    =
    \Gamma_{\mathrm{reg}}
    \mathbin{\dot\cup}
    \Gamma_p,
\end{equation}
where \(\Gamma_p\) is closed and
\begin{equation}
\label{eq:general-small-interaction-set}
    \mathcal H^{d-1}(\Gamma_p)=0.
\end{equation}
The set \(\Gamma_p\) may contain intersections of interface branches,
endpoints of interface pieces, and other singular points. Every point of
\(\Gamma_{\mathrm{reg}}\) belongs locally to exactly one embedded
\(C^2\) hypersurface.

More precisely, for every \(x_0\in\Gamma_{\mathrm{reg}}\), there exist a
neighborhood \(U_\ell\) and, after a permutation of the spatial coordinates,
a function
\[
    \zeta_\ell\in C^2
\]
such that
\begin{equation}
\label{eq:general-local-graph}
    \Gamma\cap U_\ell
    =
    \{x_1=\zeta_\ell(x')\}\cap U_\ell,
    \qquad
    x'=(x_2,\ldots,x_d).
\end{equation}
The graph separates \(U_\ell\) into
\[
    U_\ell^-:=\{x_1<\zeta_\ell(x')\},
    \qquad
    U_\ell^+:=\{x_1>\zeta_\ell(x')\}.
\]
In this chart,
\begin{equation}
\label{eq:general-local-flux}
\mathfrak f(x,\lambda)
=
H\bigl(\zeta_\ell(x')-x_1\bigr)f_\ell^-(x,\lambda)
+
H\bigl(x_1-\zeta_\ell(x')\bigr)f_\ell^+(x,\lambda),
\end{equation}
where
\begin{equation}
\label{eq:general-branch-regularity}
    f_\ell^\pm
    \in
    C^2(U_\ell\times K;\mathbb R^d)
\end{equation}
and
\begin{equation}
\label{eq:general-invariant}
    f_\ell^\pm(x,\pm a)=0
    \qquad
    \text{for }x\in U_\ell.
\end{equation}

On overlaps, the local branches represent the same physical flux, up to the
interchange of the two sides when opposite orientations are used. Away from
\(\Gamma\), we use ordinary smooth-flux charts. The family of interface and
smooth-flux charts is assumed locally finite on
\(\mathbb R^d\setminus\Gamma_p\).

We shall use the flat-interface result in the regularity class actually
required by its proof: the flattened branches are jointly \(C^1\), are
\(C^2\) in the state variable, their relevant mixed derivatives are locally
bounded, and their divergences are locally Lipschitz in the state variable.
This class is preserved by \(C^2\)-flattening and by the smooth extensions
introduced below.

\subsection{Flattening and the transformed normal flux}
\label{subsec:general-flattening}

Fix an interface chart and suppress the index \(\ell\). Define
\begin{equation}
\label{eq:general-flattening-map}
    y_1=x_1-\zeta(x'),
    \qquad
    y'=x',
\end{equation}
with inverse
\[
    x=X(y)
    =
    \bigl(y_1+\zeta(y'),y'\bigr).
\]
The Jacobian determinant of \(X\) is one. If
\[
    \widehat u(t,y):=u(t,X(y)),
\]
then \eqref{eq:general-main} becomes
\begin{equation}
\label{eq:general-flattened-equation}
\partial_t\widehat u
+
\operatorname{div}_y
\left[
    H(-y_1)\widehat f^-(y,\widehat u)
    +
    H(y_1)\widehat f^+(y,\widehat u)
\right]
=0,
\end{equation}
where
\begin{equation}
\label{eq:general-transformed-flux}
\begin{aligned}
    \widehat f_1^\pm(y,\lambda)
    &:=
    f_1^\pm(X(y),\lambda)
    -
    \sum_{j=2}^d
        \partial_{y_j}\zeta(y')
        f_j^\pm(X(y),\lambda),
\\
    \widehat f_j^\pm(y,\lambda)
    &:=
    f_j^\pm(X(y),\lambda),
    \qquad j=2,\ldots,d.
\end{aligned}
\end{equation}
The Piola identity gives, for every fixed \(\lambda\in K\),
\begin{equation}
\label{eq:general-piola-identity}
    \operatorname{div}_y\widehat f^\pm(y,\lambda)
    =
    \operatorname{div}_x f^\pm(X(y),\lambda).
\end{equation}
Consequently, the transformed branches are jointly \(C^1\), are \(C^2\) in
\(\lambda\), and, on every \(Q\Subset X^{-1}(U)\), satisfy
\begin{equation}
\label{eq:general-divergence-lipschitz}
\left|
    \operatorname{div}_y\widehat f^\pm(y,\lambda)
    -
    \operatorname{div}_y\widehat f^\pm(y,\mu)
\right|
\le
\Lambda_Q|\lambda-\mu|
\end{equation}
for \(y\in Q\) and \(\lambda,\mu\in K\).

The transformed normal components are
\begin{equation}
\label{eq:general-chart-normal-fluxes}
    N_\pm(y,\lambda)
    :=
    \widehat f_1^\pm(y,\lambda).
\end{equation}
All local quotients and polynomial replacement vectors in this chart are
formed from \(N_-\) and \(N_+\).

Orient the physical interface from \(U^-\) to \(U^+\). Its unit normal is
\begin{equation}
\label{eq:general-unit-normal}
    \nu(X(0,y'))
    =
    \frac{(1,-\nabla\zeta(y'))}
    {\sqrt{1+|\nabla\zeta(y')|^2}}.
\end{equation}
Set
\begin{equation}
\label{eq:general-sigma}
    \sigma(y')
    :=
    \sqrt{1+|\nabla\zeta(y')|^2}.
\end{equation}
The intrinsic one-sided physical normal fluxes are
\begin{equation}
\label{eq:general-intrinsic-normal-fluxes}
    \Phi_\pm(x,\lambda)
    :=
    f^\pm(x,\lambda)\cdot\nu(x),
    \qquad
    x\in\Gamma_{\mathrm{reg}}.
\end{equation}
At the interface,
\begin{equation}
\label{eq:general-chart-intrinsic-normal-relation}
    N_\pm(0,y',\lambda)
    =
    \sigma(y')
    \Phi_\pm(X(0,y'),\lambda).
\end{equation}
Thus the chart normal flux is the intrinsic unit-normal flux multiplied by a
positive co-normal factor. The factor does not change the flat intervals or
the physical vanishing-viscosity end-state pairs, but it rescales the
derivative-square quotient.

\subsection{Intrinsic quotients and the projected interface germ}
\label{subsec:general-intrinsic-quotient}

For \(x\in\Gamma_{\mathrm{reg}}\), define
\begin{equation}
\label{eq:general-intrinsic-boundary-quotients}
    P_\pm^\nu(x,\lambda)
    :=
    \int_{-a}^{\lambda}
        \left|
            \partial_s\Phi_\pm(x,s)
        \right|^2
    \,ds.
\end{equation}
By Lemma~\ref{lem:factorization-through-quotient}, these maps collapse
exactly the intervals on which the corresponding intrinsic normal flux is
constant. Hence there exist unique continuous reduced fluxes
\[
    \widehat\Phi_\pm^\nu(x,\cdot):
    P_\pm^\nu(x,K)\longrightarrow\mathbb R
\]
such that
\begin{equation}
\label{eq:general-intrinsic-reduced-flux}
    \Phi_\pm(x,\lambda)
    =
    \widehat\Phi_\pm^\nu
    \bigl(x,P_\pm^\nu(x,\lambda)\bigr).
\end{equation}
For \(r,s\in P_\pm^\nu(x,K)\), set
\begin{equation}
\label{eq:general-intrinsic-entropy-flux}
    Q_\pm^\nu(x;r,s)
    :=
    \operatorname{sgn}(r-s)
    \left[
        \widehat\Phi_\pm^\nu(x,r)
        -
        \widehat\Phi_\pm^\nu(x,s)
    \right].
\end{equation}

Let
\[
    \mathcal G_{\mathrm{VV}}^{\mathrm{phys}}
    \bigl(\Phi_-(x,\cdot),\Phi_+(x,\cdot)\bigr)
    \subset K^2
\]
be the physical vanishing-viscosity germ associated with the ordered pair of
intrinsic normal fluxes. Define its image in the quotient variables by
\begin{equation}
\label{eq:general-intrinsic-projected-germ}
\begin{aligned}
    \mathcal G_{\mathrm{vv}}^\nu(x)
    :=
    \Bigl\{
        \bigl(
            P_-^\nu(x,p^-),
            P_+^\nu(x,p^+)
        \bigr):
        (p^-,p^+)
        \in
        \mathcal G_{\mathrm{VV}}^{\mathrm{phys}}
        \bigl(\Phi_-(x,\cdot),\Phi_+(x,\cdot)\bigr)
    \Bigr\}.
\end{aligned}
\end{equation}
By Proposition~\ref{prop:flat-projected-germ-maximal},
\(\mathcal G_{\mathrm{vv}}^\nu(x)\) is a maximal
\(L^1\)-dissipative germ for the reduced intrinsic fluxes
\(\widehat\Phi_-^\nu(x,\cdot)\) and
\(\widehat\Phi_+^\nu(x,\cdot)\).

In the flattened chart, the boundary quotient maps are
\begin{equation}
\label{eq:general-chart-boundary-quotients}
    P_\pm^{\mathrm{ch}}(y',\lambda)
    :=
    \int_{-a}^{\lambda}
        \left|
            \partial_s N_\pm(0,y',s)
        \right|^2
    \,ds.
\end{equation}

\begin{lemma}[Transformation of the quotient and projected germ under flattening]
\label{lem:general-chart-covariance}
Let \(x=X(0,y')\in\Gamma_{\mathrm{reg}}\). Then
\begin{equation}
\label{eq:general-quotient-scaling}
    P_\pm^{\mathrm{ch}}(y',\lambda)
    =
    \sigma(y')^2P_\pm^\nu(x,\lambda).
\end{equation}
Moreover,
\begin{equation}
\label{eq:general-chart-germ-scaling}
    \mathcal G_{\mathrm{vv}}^{\mathrm{ch}}(y')
    =
    \sigma(y')^2\mathcal G_{\mathrm{vv}}^\nu(x),
\end{equation}
where
\[
    \sigma^2\mathcal G_{\mathrm{vv}}^\nu(x)
    :=
    \left\{
        \bigl(\sigma^2r^-,\sigma^2r^+\bigr):
        (r^-,r^+)\in
        \mathcal G_{\mathrm{vv}}^\nu(x)
    \right\}.
\]
If \(Q_\pm^{\mathrm{ch}}\) are the reduced entropy fluxes associated with the
chart normal fluxes, then
\begin{equation}
\label{eq:general-entropy-flux-scaling}
    Q_\pm^{\mathrm{ch}}
    \bigl(y';\sigma^2r,\sigma^2s\bigr)
    =
    \sigma Q_\pm^\nu(x;r,s).
\end{equation}
In particular, the sign of the interface Kato contribution is invariant
under the change from intrinsic to chart variables.

If the orientation is reversed, the two physical sides are interchanged and
the intrinsic normal fluxes change sign. The quotient maps are interchanged,
and
\begin{equation}
\label{eq:general-orientation-reversal}
    \mathcal G_{\mathrm{vv}}^{-\nu}(x)
    =
    \left\{
        (r^+,r^-):
        (r^-,r^+)\in
        \mathcal G_{\mathrm{vv}}^\nu(x)
    \right\}.
\end{equation}
\end{lemma}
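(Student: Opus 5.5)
The plan is to derive everything from the pointwise identity \eqref{eq:general-chart-intrinsic-normal-relation}, $N_\pm(0,y',\lambda)=\sigma(y')\Phi_\pm(x,\lambda)$ with $x=X(0,y')$ and $\sigma(y')>0$ independent of $\lambda$. First, differentiating in $s$ gives $\partial_sN_\pm(0,y',s)=\sigma(y')\,\partial_s\Phi_\pm(x,s)$. Squaring and integrating over $[-a,\lambda]$ in \eqref{eq:general-chart-boundary-quotients} yields \eqref{eq:general-quotient-scaling} immediately. Consequently $P^{\mathrm{ch}}_\pm(y',K)=\sigma^2P^\nu_\pm(x,K)$. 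The reduced chart fluxes are then forced by uniqueness in Lemma~\ref{lem:factorization-through-quotient}:
\[
    \widehat N_\pm^{\mathrm{ch}}(y',\sigma^2r)=\sigma\,\widehat\Phi^\nu_\pm(x,r),
\]
since the right-hand side, evaluated at $r=P^\nu_\pm(x,\lambda)$, equals $\sigma\Phi_\pm=N_\pm$. Inserting this into the definition of the reduced entropy fluxes, and using $\operatorname{sgn}(\sigma^2r-\sigma^2s)=\operatorname{sgn}(r-s)$, gives \eqref{eq:general-entropy-flux-scaling}. Because $\sigma>0$, the interface Kato term $Q_+-Q_-$ is multiplied by a positive factor, so its sign is unchanged.

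The germ identity \eqref{eq:general-chart-germ-scaling} is the step needing an actual argument. I would show that the physical vanishing-viscosity germ is invariant under multiplying both normal fluxes by the same constant $\sigma>0$:
\[
    \mathcal G^{\mathrm{phys}}_{\mathrm{VV}}(\sigma\Phi_-,\sigma\Phi_+)=\mathcal G^{\mathrm{phys}}_{\mathrm{VV}}(\Phi_-,\Phi_+).
\]
Using the characterization \cite[formula~(42)]{AM} (or the standing-profile description of \cite[Section~5]{AKR}), this is a time/viscosity rescaling. A standing profile $\varepsilon R'=\Phi_\pm(R)-c$ becomes, after replacing $\varepsilon$ by $\sigma\varepsilon$, a profile for $\sigma\Phi_\pm$ with the same end states. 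Equivalently, the Rankine--Hugoniot relation and the one-sided sign conditions of formula~(42) are homogeneous in the fluxes, and the maximal-dissipativity closure is too, since the dissipation inequalities scale by $\sigma$. With this invariance, \eqref{eq:general-chart-germ-scaling} follows by applying the projection \eqref{eq:general-intrinsic-projected-germ}, with $P^{\mathrm{ch}}_\pm=\sigma^2P^\nu_\pm$ replacing the intrinsic maps. This scaling invariance is the main point to check, and I would verify it directly on the explicit formula rather than through profiles, to avoid issues with non-elementary germ elements.

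For orientation reversal, replacing $\nu$ by $-\nu$ swaps the roles of $U^\pm$, so the new ordered pair of intrinsic fluxes is $(-\Phi_+,-\Phi_-)$. The derivative-square quotients are insensitive to the sign, so they are $P^\nu_+$ and $P^\nu_-$, interchanged. It then remains to show
\[
    (p^-,p^+)\in\mathcal G^{\mathrm{phys}}_{\mathrm{VV}}(\Phi_-,\Phi_+)\iff (p^+,p^-)\in\mathcal G^{\mathrm{phys}}_{\mathrm{VV}}(-\Phi_+,-\Phi_-).
\]
This follows from the reflection $x_1\mapsto -x_1$. A standing profile $R(x_1)$ for the first problem gives $R(-x_1)$, which solves the viscous standing-wave equation with fluxes $-\Phi_+$ on the left and $-\Phi_-$ on the right, and has the end states swapped. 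Dissipativity inequalities are likewise preserved, since $Q_+-Q_-$ becomes $(-Q_-)-(-Q_+)$. Projecting through the interchanged quotient maps gives \eqref{eq:general-orientation-reversal}.
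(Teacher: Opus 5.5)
Your proposal is correct and follows the paper's proof essentially step for step. You differentiate $N_\pm(0,y',\lambda)=\sigma(y')\Phi_\pm(x,\lambda)$ and integrate the squares to get the quotient scaling. For the germ scaling you use invariance of the physical germ when both fluxes are multiplied by $\sigma>0$, which the paper also justifies through the explicit characterization. You identify $\widehat N_\pm(0,y',\sigma^2 r)=\sigma\widehat\Phi^\nu_\pm(x,r)$ to obtain the entropy-flux scaling, and you reflect standing profiles for the orientation reversal. Your extra checks only spell out what the paper leaves implicit: the rescaling $\varepsilon\mapsto\sigma\varepsilon$ of profiles, and the fact that $Q_+-Q_-$ is unchanged under the swap, which covers the maximal extension.
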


\begin{proof}
Equation~\eqref{eq:general-chart-intrinsic-normal-relation} gives
\[
    \partial_\lambda N_\pm(0,y',\lambda)
    =
    \sigma(y')
    \partial_\lambda\Phi_\pm(x,\lambda).
\]
Integrating the squares proves
\eqref{eq:general-quotient-scaling}.

Since \(\sigma(y')>0\), multiplication of both physical normal fluxes by
\(\sigma(y')\) preserves the Rankine--Hugoniot relation and all inequalities
in the explicit characterization of the physical vanishing-viscosity germ.
Thus the physical end-state germ is unchanged. Together with
\eqref{eq:general-quotient-scaling}, this proves
\eqref{eq:general-chart-germ-scaling}.

The reduced chart fluxes satisfy
\[
    \widehat N_\pm
    \bigl(0,y',\sigma^2r\bigr)
    =
    \sigma\widehat\Phi_\pm^\nu(x,r).
\]
Since \(\sigma^2>0\), the sign of \(r-s\) is preserved, and
\eqref{eq:general-entropy-flux-scaling} follows.

Finally, replacing \(\nu\) by \(-\nu\) interchanges the physical sides and
changes the signs of both normal fluxes. The derivative-square quotients are
therefore interchanged, while reflection of the standing profiles
interchanges the two end states. This gives
\eqref{eq:general-orientation-reversal}.
\end{proof}

Suppose that a local flat-interface solution has chartwise quotient traces
\[
    z_{\Gamma,\mathrm{ch}}^-,
    \qquad
    z_{\Gamma,\mathrm{ch}}^+.
\]
Define the normalized traces by
\begin{equation}
\label{eq:general-normalized-projected-traces}
    r_\Gamma^\pm(t,x)
    :=
    \sigma(y')^{-2}
    z_{\Gamma,\mathrm{ch}}^\pm(t,y'),
    \qquad
    x=X(0,y').
\end{equation}
By Lemma~\ref{lem:general-chart-covariance},
\begin{equation}
\label{eq:general-intrinsic-germ-condition}
    \bigl(
        r_\Gamma^-(t,x),
        r_\Gamma^+(t,x)
    \bigr)
    \in
    \mathcal G_{\mathrm{vv}}^\nu(x)
\end{equation}
if and only if the chartwise traces belong to the chartwise projected germ.
The normalized condition is therefore independent of the chosen oriented
flattening chart. If another chart uses the opposite orientation, the two
components are interchanged according to
\eqref{eq:general-orientation-reversal}.

\subsection{Smooth radial extension}
\label{subsec:general-radial-extension}

Fix a flattened interface chart and choose
\[
    c=(0,c')
\]
on the flat interface. Choose \(R>0\) so that
\begin{equation}
\label{eq:general-chart-radius}
    \overline{B_{3R}(c)}
    \Subset
    X^{-1}(U).
\end{equation}
Let
\[
    \rho_R\in C^\infty([0,\infty))
\]
satisfy
\begin{equation}
\label{eq:general-radial-profile}
\begin{aligned}
    \rho_R(r)&=r
    &&\text{for }0\le r\le2R,
\\
    0<\rho_R(r)&<3R
    &&\text{for }r>0,
\\
    \rho_R(r)&=\frac{5R}{2}
    &&\text{for }r\ge3R.
\end{aligned}
\end{equation}
Define
\begin{equation}
\label{eq:general-radial-compression}
\mathcal S_{c,R}(y)
:=
\begin{cases}
c+
\dfrac{\rho_R(|y-c|)}{|y-c|}
(y-c),
&y\neq c,
\\[2mm]
c,
&y=c.
\end{cases}
\end{equation}
The map \(\mathcal S_{c,R}\) is smooth, is the identity on \(B_{2R}(c)\),
and has image in \(B_{3R}(c)\). Since \(c_1=0\) and the radial factor is
positive, it preserves the two half-spaces and the flat interface.

Define global flat-interface branches by
\begin{equation}
\label{eq:general-extended-fluxes}
    \overline f^\pm(y,\lambda)
    :=
    \widehat f^\pm
    \bigl(\mathcal S_{c,R}(y),\lambda\bigr),
\end{equation}
with normal components
\begin{equation}
\label{eq:general-extended-normal-fluxes}
    \overline N_\pm(y,\lambda)
    :=
    N_\pm
    \bigl(\mathcal S_{c,R}(y),\lambda\bigr).
\end{equation}
The corresponding normal-flux quotients are
\begin{equation}
\label{eq:general-extended-quotients}
    \overline\Pi_\pm(y,\lambda)
    :=
    \int_{-a}^{\lambda}
        \left|
            \partial_s\overline N_\pm(y,s)
        \right|^2
    \,ds.
\end{equation}

\begin{lemma}[Properties of the radial extension]
\label{lem:general-radial-extension}
The extended branches \(\overline f^\pm\) are jointly \(C^1\), are
\(C^2\) in the state variable, satisfy
\[
    \overline f^\pm(y,\pm a)=0,
\]
and have globally bounded state derivatives. Moreover, there exists
\(\overline\Lambda_{c,R}<\infty\) such that
\begin{equation}
\label{eq:general-extended-divergence-lipschitz}
\left|
    \operatorname{div}_y\overline f^\pm(y,\lambda)
    -
    \operatorname{div}_y\overline f^\pm(y,\mu)
\right|
\le
\overline\Lambda_{c,R}|\lambda-\mu|
\end{equation}
for all \(y\in\mathbb R^d\) and \(\lambda,\mu\in K\).

If
\begin{equation}
\label{eq:general-interior-chart-quotient}
    \Pi_\pm(y,\lambda)
    :=
    \int_{-a}^{\lambda}
        \left|
            \partial_sN_\pm(y,s)
        \right|^2
    \,ds,
\end{equation}
then
\begin{equation}
\label{eq:general-extended-quotient-identity}
    \overline\Pi_\pm(y,\lambda)
    =
    \Pi_\pm
    \bigl(\mathcal S_{c,R}(y),\lambda\bigr).
\end{equation}
On \(B_{2R}(c)\), the extended branches, normal components, and quotients
coincide with the original transformed objects.
\end{lemma}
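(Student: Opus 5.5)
The lemma is a statement about composing the transformed chart branches \(\widehat f^\pm\) with the smooth map \(\mathcal S_{c,R}\). The plan is to verify each property from the corresponding property of \(\widehat f^\pm\) on the compact set \(\overline{B_{3R}(c)}\), using that the image of \(\mathcal S_{c,R}\) lies in \(B_{3R}(c)\Subset X^{-1}(U)\) by \eqref{eq:general-chart-radius}.

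\emph{Smoothness and boundedness of \(\mathcal S_{c,R}\).} Write \(\mathcal S_{c,R}(y)=c+\theta(|y-c|)(y-c)\) with \(\theta(r)=\rho_R(r)/r\). Since \(\rho_R(r)=r\) near \(0\), \(\theta\equiv1\) near \(0\), so \(\mathcal S_{c,R}\) is smooth; it is the identity on \(B_{2R}(c)\). For \(|y-c|\ge3R\) we have \(\mathcal S_{c,R}(y)=c+\tfrac{5R}{2}(y-c)/|y-c|\), whose derivative is \(O(1/|y-c|)\). Hence \(D\mathcal S_{c,R}\) is globally bounded, and so is \(D^2\mathcal S_{c,R}\). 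Because \(c_1=0\) and the radial factor is positive, the first coordinate of \(\mathcal S_{c,R}(y)\) has the sign of \(y_1\); this gives preservation of the half-spaces and of the flat interface.

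\emph{Regularity, invariance, and the quotient identity.} Joint \(C^1\) regularity and \(C^2\) regularity in \(\lambda\) follow from the chain rule, since \(\widehat f^\pm\) has these properties on \(X^{-1}(U)\). The state derivatives \(\partial_\lambda^k\overline f^\pm(y,\lambda)=\partial_\lambda^k\widehat f^\pm(\mathcal S_{c,R}(y),\lambda)\), \(k\le2\), are bounded by their suprema over the compact set \(\overline{B_{3R}(c)}\times K\). The condition \(\overline f^\pm(y,\pm a)=0\) is inherited pointwise from \eqref{eq:general-invariant}, which is preserved by flattening because \eqref{eq:general-transformed-flux} is linear in \(f^\pm\). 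The identity \eqref{eq:general-extended-quotient-identity} holds because \(\partial_s\overline N_\pm(y,s)=(\partial_sN_\pm)(\mathcal S_{c,R}(y),s)\): the composition acts only in \(y\). Coincidence on \(B_{2R}(c)\) follows because \(\mathcal S_{c,R}\) is the identity there.

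\emph{Divergence Lipschitz bound (the main step).} By the chain rule,
\[
\operatorname{div}_y\overline f^\pm(y,\lambda)=\sum_{i,k}(\partial_{k}\widehat f^\pm_i)(\mathcal S_{c,R}(y),\lambda)\,\partial_{y_i}\mathcal S^k_{c,R}(y).
\]
This is no longer the divergence of \(\widehat f^\pm\) evaluated at \(\mathcal S_{c,R}(y)\), so \eqref{eq:general-divergence-lipschitz} cannot be quoted directly. Instead, bound the difference at \(\lambda\) and \(\mu\) by \(\|D\mathcal S_{c,R}\|_\infty\) times the Lipschitz constant in \(\lambda\) of the full spatial Jacobian \(\nabla_y\widehat f^\pm\) on \(\overline{B_{3R}(c)}\). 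That Lipschitz constant is controlled by \(\sup|\partial_\lambda\nabla_y\widehat f^\pm|\). This is a mixed derivative; it is locally bounded because \(f^\pm\in C^2\) and \(\zeta\in C^2\). In \eqref{eq:general-transformed-flux} it involves only \(\nabla^2\zeta\) times \(f_j\), which is continuous.

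I expect this step to be the only real obstacle. It needs the mixed derivatives \(\partial_\lambda\nabla_y\widehat f^\pm\) to be continuous, which is exactly the ``relevant mixed derivatives are locally bounded'' clause of the regularity class, and it needs the global bound on \(D\mathcal S_{c,R}\) established above. Together these give the constant \(\overline\Lambda_{c,R}\) in \eqref{eq:general-extended-divergence-lipschitz}.
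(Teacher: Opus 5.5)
Your proposal is correct and follows the same route as the paper. Like the paper, you compose with \(\mathcal S_{c,R}\), whose image lies in the compact set \(\overline{B_{3R}(c)}\Subset X^{-1}(U)\), and you write \(\operatorname{div}_y\overline f^\pm\) by the chain rule in terms of first spatial derivatives of \(\widehat f^\pm\) and bounded derivatives of \(\mathcal S_{c,R}\). You then bound the mixed derivatives \(\partial_\lambda\nabla_y\widehat f^\pm\) on that compact set, and use the \(\lambda\)-independence of \(\mathcal S_{c,R}\) for the quotient identity. You add useful detail the paper omits, namely the \(O(1/|y-c|)\) decay that makes \(D\mathcal S_{c,R}\) globally bounded. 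One small imprecision: after differentiating in \(\lambda\), the term carrying \(\nabla^2\zeta\) is \(\nabla^2\zeta\,\partial_\lambda f_j^\pm\), not \(\nabla^2\zeta\,f_j^\pm\). There are also terms with \(\nabla\zeta\) times \(\partial_\lambda\nabla_x f^\pm\). All of these are continuous, so the argument is unaffected.
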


\begin{proof}
The image of \(\mathcal S_{c,R}\) is contained in the compact set
\(\overline{B_{3R}(c)}\Subset X^{-1}(U)\). The asserted regularity and the
bounds on the state derivatives follow by composition. The invariant-region
condition is preserved because
\[
    \widehat f^\pm(y,\pm a)=0.
\]
The chain rule expresses
\(\operatorname{div}_y\overline f^\pm\) through the first spatial
derivatives of \(\widehat f^\pm\) and the bounded derivatives of
\(\mathcal S_{c,R}\). The relevant mixed derivatives of
\(\widehat f^\pm\) are bounded on \(\overline{B_{3R}(c)}\times K\), which
gives \eqref{eq:general-extended-divergence-lipschitz}.

Since \(\mathcal S_{c,R}\) is independent of the state variable,
\[
    \partial_\lambda\overline N_\pm(y,\lambda)
    =
    \partial_\lambda N_\pm
    \bigl(\mathcal S_{c,R}(y),\lambda\bigr).
\]
Integrating the square in \(\lambda\) proves
\eqref{eq:general-extended-quotient-identity}. The final assertion follows
from the fact that \(\mathcal S_{c,R}\) is the identity on \(B_{2R}(c)\).
\end{proof}

\subsection{The local flat-interface solution operator}
\label{subsec:general-local-semigroup}

Let \(v\) be a \(K\)-valued function on \(B_{2R}(c)\). If
\[
    v\in BV(B_{2R}(c)),
\]
let
\[
    Ev\in BV(\mathbb R^d)
\]
be a \(BV\)-extension of \(v\). Define
\[
    T_K(s):=\min\{a,\max\{-a,s\}\}.
\]
Choose
\[
    \eta\in C_c^\infty(B_{3R}(c)),
    \qquad
    \eta=1
    \quad\text{on }B_{2R}(c),
\]
and fix \(k_*\in K\). Then
\begin{equation}
\label{eq:general-bv-extension}
    v_c
    :=
    T_K\left(
        \eta Ev+(1-\eta)k_*
    \right)
\end{equation}
belongs to \(BV(\mathbb R^d;K)\) and agrees with \(v\) on \(B_{2R}(c)\).

Apply the flat-interface theorem to
\begin{equation}
\label{eq:general-extended-flat-problem}
\begin{cases}
\partial_t w
+
\operatorname{div}_y
\left[
    H(-y_1)\overline f^-(y,w)
    +
    H(y_1)\overline f^+(y,w)
\right]
=0,
\\[1mm]
w(0,y)=v_c(y).
\end{cases}
\end{equation}
Denote the resulting global solution operator by
\[
    S_t^{c,R}.
\]
By Corollary~\ref{cor:flat-extension-l1}, it extends uniquely by local
\(L^1\)-continuity to all \(K\)-valued data in
\[
    L^1_{\mathrm{loc}}(\mathbb R^d)
    \cap
    L^\infty(\mathbb R^d).
\]
The extended family is strongly continuous in
\(L^1_{\mathrm{loc}}\) and satisfies
\begin{equation}
\label{eq:general-flat-semigroup-property}
    S_{t+s}^{c,R}
    =
    S_t^{c,R}S_s^{c,R},
    \qquad
    S_0^{c,R}=\operatorname{Id}.
\end{equation}

Set
\begin{equation}
\label{eq:general-local-speed}
    L_{c,R}
    :=
    \max_\pm
    \left\|
        \partial_\lambda\overline f^\pm
    \right\|_{L^\infty(\mathbb R^d\times K)}.
\end{equation}
For
\[
    0<T_{c,R}<\frac{R}{L_{c,R}},
\]
with the usual interpretation when \(L_{c,R}=0\), define
\begin{equation}
\label{eq:general-local-cone}
    \mathcal C(c,R,T_{c,R})
    :=
    \left\{
        (t,y):
        0<t<T_{c,R},
        \quad
        |y-c|<R-L_{c,R}t
    \right\}.
\end{equation}

\begin{proposition}[Local interface solver]
\label{prop:general-local-solver}
Let \(x_0\in\Gamma_{\mathrm{reg}}\), let \(X\) be a flattening chart around
\(x_0\), and let \(v\) be a \(K\)-valued initial datum defined near \(x_0\).
The restriction of \(S_t^{c,R}v_c\) to a sufficiently small cone is
independent of

\begin{enumerate}
\item the radial compression outside \(B_{2R}(c)\);
\item the extension of the transformed coefficients outside \(B_{2R}(c)\);
\item the extension of the initial datum outside \(B_{2R}(c)\).
\end{enumerate}

More precisely, any two such choices produce the same solution in a common
cone
\[
    \{(t,y):0<t<T,\ |y-c|<R-Lt\},
\]
where \(L\) is the maximum of the two corresponding propagation speeds.
Pulling this common local solution back by \(X\) defines a local solution of
\eqref{eq:general-main} near \(x_0\).

For \(BV\) base data, the chartwise quotient variables have strong one-sided
traces, and the normalized traces
\eqref{eq:general-normalized-projected-traces} satisfy
\[
    (r_\Gamma^-,r_\Gamma^+)
    \in
    \mathcal G_{\mathrm{vv}}^\nu
\]
at almost every regular interface point contained in the cone.
\end{proposition}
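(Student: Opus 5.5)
The independence assertions follow from a finite-propagation estimate for the flat-interface semigroup. The statement of Theorem~\ref{thm:flat-wellposedness} gives stability on balls with an unspecified $\overline R$, which is not sharp enough here. I will therefore redo the Kato argument with a cone-adapted test function. Fix two admissible choices: two radial compressions, two coefficient extensions, and two datum extensions. They produce two global flat-interface problems whose branches $\overline f^\pm_1,\overline f^\pm_2$ coincide with $\widehat f^\pm$ on $B_{2R}(c)$, and whose data coincide with $v$ there. Let $w_1,w_2$ be the corresponding admissible solutions. For $BV$ base data these are given by Theorem~\ref{thm:flat-wellposedness}; for general data, I pass to the limit afterwards using Corollary~\ref{cor:flat-extension-l1}.

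Inside $(0,T)\times B_{2R}(c)$ both $w_1$ and $w_2$ are entropy solutions in the two half-balls for the same fluxes. Because the quotients $\overline\Pi_\pm$ agree there by Lemma~\ref{lem:general-radial-extension}, their quotient traces live in the same projected germ $\mathcal G_{\mathrm{vv}}$. I then run the doubling argument of Theorem~\ref{thm:flat-wellposedness} on this region with the test function $\phi(t,y)=\chi(|y-c|+Lt)$, where $\chi$ is nonincreasing, equals $1$ on $[0,R-\delta]$, vanishes beyond $R$, and $L=\max(L_1,L_2)$. The steps are as follows.
\begin{enumerate}
\item The interface term is nonpositive by the $L^1$-dissipativity \eqref{eq:flat-germ-dissipativity}.
\item The lateral term $\partial_t\phi\,|w_1-w_2|+\nabla\phi\cdot q$ is nonpositive, because $|q|\le L|w_1-w_2|$.
\item The source term $\operatorname{sgn}(w_1-w_2)\bigl(\operatorname{div}\overline f(y,w_1)-\operatorname{div}\overline f(y,w_2)\bigr)$ is bounded by $\overline\Lambda|w_1-w_2|$ using \eqref{eq:general-extended-divergence-lipschitz}.
\end{enumerate}
Gronwall's inequality then gives
\[
\int\phi(t,\cdot)|w_1-w_2|\,dy\le e^{\overline\Lambda t}\int\phi(0,\cdot)|v_{c,1}-v_{c,2}|\,dy=0.
\]
Letting $\delta\to0$ yields $w_1=w_2$ on the cone. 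The point requiring care is that the Kato inequality for two admissible solutions must be localized to a region in which only the coefficients on $B_{2R}(c)$ enter. This holds because the doubling argument is local: the entropy inequalities, the traces, and the germ condition are all pointwise or local statements. For non-$BV$ data, I apply the estimate to $BV$ approximants, which may be chosen to agree on $B_{2R}(c)$ by extending the same approximant, and pass to the limit with the $L^1_{\mathrm{loc}}$ continuity of $S^{c,R}_t$.

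Next I pull back by $X$. The Jacobian is one and the Piola identity \eqref{eq:general-piola-identity} holds, so the weak formulation and the one-sided Kruzhkov inequalities in the $y$-variables transform into those of \eqref{eq:general-main} in $X(\mathcal C)$. The test functions transform by composition, and the entropy source term is preserved by \eqref{eq:general-piola-identity}. This gives a local solution near $x_0$.

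For $BV$ base data, $v_c$ is $BV$, and Propositions~\ref{prop:flat-projected-traces} and \ref{prop:flat-projected-germ-selection} give strong chartwise quotient traces with $(z^-_{\Gamma,\mathrm{ch}},z^+_{\Gamma,\mathrm{ch}})\in\mathcal G^{\mathrm{ch}}_{\mathrm{vv}}$ almost everywhere. Inside the cone $\overline N_\pm=N_\pm$, and at the boundary these reduce to $\sigma\Phi_\pm$ by \eqref{eq:general-chart-intrinsic-normal-relation}. Lemma~\ref{lem:general-chart-covariance}, specifically \eqref{eq:general-chart-germ-scaling}, converts this into $(r^-_\Gamma,r^+_\Gamma)\in\mathcal G^\nu_{\mathrm{vv}}$ for the normalized traces \eqref{eq:general-normalized-projected-traces}.

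The main obstacle is the cone-localized Kato step. One has to verify that the interface contribution survives localization and still has the correct sign. This requires the strong quotient traces of both solutions on $\{y_1=0\}\cap B_R(c)$, as in the uniqueness part of Theorem~\ref{thm:flat-wellposedness}, combined with a cone test function instead of a ball cutoff.
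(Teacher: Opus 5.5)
Your proposal is correct and follows the paper's argument: a localized Kato inequality on the region where the two extensions coincide, tested with a shrinking-cone cutoff so that the right-hand side vanishes; $BV$-approximation plus local $L^1$-stability for general data; and the flat-interface theorem combined with Lemma~\ref{lem:general-chart-covariance} for the trace and germ assertions. Your explicit cone test function $\chi(|y-c|+Lt)$, the sign checks on the lateral and interface terms, and the Piola-identity justification of the pull-back fill in details that the paper leaves implicit.
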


\begin{proof}
Consider two global extensions which coincide on \(B_{2R}(c)\), and whose
initial data also coincide there. In the region where the two coefficients
coincide, the local Kato inequality contains no coefficient-difference term.
Testing it with a shrinking-cone cutoff gives
\[
\int_{B_{R-Lt}(c)}
    |w_1(t,y)-w_2(t,y)|
\,dy
\le
C_t
\int_{B_R(c)}
    |w_1(0,y)-w_2(0,y)|
\,dy,
\]
where \(L\) is a common propagation speed. The right-hand side vanishes, so
the two solutions agree in the common cone.

For \(BV\) base data, the trace and germ assertions follow from the
flat-interface theorem and Lemma~\ref{lem:general-chart-covariance}. For
general local data, independence follows by \(BV\)-approximation and local
\(L^1\)-stability.
\end{proof}

At points of \(\mathbb R^d\setminus\Gamma\), the analogous local
construction uses the closed heterogeneous Kruzhkov solution operator for the
single smooth branch.

\subsection{Compatibility on chart overlaps}
\label{subsec:general-overlap}

\begin{proposition}[Compatibility of local solvers]
\label{prop:general-overlap-compatibility}
Let \(u_1\) and \(u_2\) be two local solutions constructed in possibly
different charts. Assume that they are based at the same initial time and
have the same initial datum on the base of a cone compactly contained in the
overlap of their physical domains. Then
\[
    u_1=u_2
\]
almost everywhere in a smaller cone.

The same conclusion holds for local solutions obtained from initial data in
the \(L^1_{\mathrm{loc}}\)-closure of \(BV\).
\end{proposition}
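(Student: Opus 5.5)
The plan is to reduce the compatibility statement to a Kato inequality for two admissible solutions written in one common chart, and then close it with a shrinking-cone argument exactly as in the proof of Proposition~\ref{prop:general-local-solver}. Fix a point \(x_*\) in the overlap together with a cone base \(B_r(x_*)\) compactly contained in both physical domains.

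\textbf{Case 1: \(x_*\notin\Gamma\).} Both solutions are Kruzhkov entropy solutions of the single smooth branch near \(x_*\), so the classical local \(L^1\)-contraction in a cone gives \(u_1=u_2\).

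\textbf{Case 2: \(x_*\in\Gamma_{\mathrm{reg}}\).} The closure of the cone base can be assumed to avoid \(\Gamma_p\), since \(\Gamma_p\) is closed. Pull \(u_2\) back into the flattening chart \(X\) of \(u_1\) through the \(C^2\) transition map. I will verify that, in this chart, \(u_2\) satisfies the flattened weak formulation~\eqref{eq:general-flattened-equation} and the one-sided entropy inequalities in the two half-spaces. Both properties transfer because the physical flux is chart-independent and entropy inequalities are invariant under \(C^2\) diffeomorphisms; the Jacobian may differ from one in the transition map, but the divergence form is preserved by the Piola identity~\eqref{eq:general-piola-identity}.

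\textbf{The interface condition.} This is the key point. The germ condition for \(u_2\) is known only in its own chart, as chartwise quotient traces in \(\mathcal G^{\mathrm{ch}_2}_{\mathrm{vv}}\). By Lemma~\ref{lem:general-chart-covariance} this is equivalent to the intrinsic condition~\eqref{eq:general-intrinsic-germ-condition} on the normalized traces \(r_\Gamma^\pm\). If the charts have opposite orientations, the two sides are swapped according to~\eqref{eq:general-orientation-reversal}. Applying the lemma again in chart 1 converts the condition back to \(\mathcal G^{\mathrm{ch}_1}_{\mathrm{vv}}\). Two technical facts are needed here. First, the strong quotient traces of \(u_2\) must exist in chart 1. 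Strong \(L^1\) traces are preserved under a \(C^2\) change of transversal variable: the level sets \(\{y_1=\delta\}\) of one chart converge in the normal direction to those of the other, and the quotient weights \(|\partial_\lambda N|^2\) are related by the continuous positive factor \(\sigma^2\) at the interface. Second, the interior quotients differ only by continuous factors that converge uniformly as the interface is approached, via~\eqref{eq:flat-quotient-boundary-convergence}.

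\textbf{Closing Case 2.} With both solutions now admissible in chart 1, the doubled Kato inequality holds in each half-space. Its interface term is \(Q_+(z_1^+,z_2^+)-Q_-(z_1^-,z_2^-)\le0\) by~\eqref{eq:flat-germ-dissipativity}. The shrinking-cone cutoff with speed equal to the larger of the two propagation speeds then gives
\[
\int_{B_{r-Lt}}|u_1-u_2|(t)\,dy\le C\int_{B_r}|u_1-u_2|(0)\,dy=0 .
\]

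\textbf{Closure data.} For data in the \(L^1_{\mathrm{loc}}\)-closure of \(BV\), approximate the common datum by \(BV\) data \(v^n\). Each local solver converges in \(L^1\) on its cone by the local stability estimate of Theorem~\ref{thm:flat-wellposedness} and Corollary~\ref{cor:flat-extension-l1} (and the Kruzhkov analogue in smooth charts). The two local solutions coincide for every \(n\), so they coincide in the limit.

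\textbf{Main obstacle.} I expect the main difficulty to be transferring the strong quotient traces and the germ condition from one chart to the other, rather than the Kato argument itself. The plan is to use the intrinsic normalized traces as the chart-free object, and to prove the trace transfer by covering a neighbourhood of the interface with tubular coordinates along the unit normal \(\nu\). In these coordinates both charts' slices are \(C^1\)-close graphs, and strong traces follow from Fubini together with the uniform equicontinuity in \(L^1\) of the translates.
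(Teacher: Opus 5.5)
Your proposal follows the paper's proof. You write both solutions in one flattening chart, and Lemma~\ref{lem:general-chart-covariance} (with the swap \eqref{eq:general-orientation-reversal}) puts their normalized traces in the same intrinsic germ, so the doubled Kato interface term is nonpositive; a shrinking-cone cutoff closes the $BV$ case, and a common $BV$ approximation with local stability handles closure data.

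The one correction concerns the trace-transfer step you add, which the paper passes over silently. Ess-lim strong traces are not in general preserved under a $C^2$ change of transversal foliation, even with tangential $L^1$-equicontinuity: thin strips $\{|y_1-\delta_k(1+y')|<\epsilon_k\}$ with $\epsilon_k\ll\delta_k$ are negligible on the slices $y_1=\mathrm{const}$ but fill whole slices $y_1=\delta(1+y')$ for $\delta$ near $\delta_k$. However, the Kato argument only needs the averaged convergence $h^{-1}\int_0^h\int_B|z(s,y')-z^\tau(y')|\,dy'\,ds\to0$ along the chart-1 slices. Your Fubini/change-of-variables argument does give this, together with the uniform asymptotic proportionality of the two charts' quotient maps near $\Gamma$, so the proof goes through once the claim is weakened accordingly.
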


\begin{proof}
We first consider \(BV\) base data. Away from the interface, the conclusion
follows from the standard local Kato inequality.

Suppose that the overlap meets \(\Gamma_{\mathrm{reg}}\). Choose a common
orientation of the physical interface and write both solutions in one
flattening chart. By Lemma~\ref{lem:general-chart-covariance}, their
normalized trace pairs belong to the same intrinsic germ
\[
    \mathcal G_{\mathrm{vv}}^\nu(x).
\]
If one of the original charts used the opposite orientation, the two
components are first interchanged according to
\eqref{eq:general-orientation-reversal}.

In the chosen chart, the interface contribution in the doubled Kato
inequality is
\[
\begin{aligned}
\sigma(x)
\Bigl[
&Q_+^\nu
\bigl(
    x;
    r_{1,\Gamma}^+,
    r_{2,\Gamma}^+
\bigr)
\\
&-
Q_-^\nu
\bigl(
    x;
    r_{1,\Gamma}^-,
    r_{2,\Gamma}^-
\bigr)
\Bigr].
\end{aligned}
\]
The expression in brackets is nonpositive by the
\(L^1\)-dissipativity of
\(\mathcal G_{\mathrm{vv}}^\nu(x)\), and \(\sigma(x)>0\).
The interface term therefore has the correct sign.

A shrinking-cone cutoff, together with the local Lipschitz estimate for the
divergences, gives
\[
    \int_{B_{R-Lt}}
        |u_1(t,x)-u_2(t,x)|
    \,dx
    \le
    C_t
    \int_{B_R}
        |u_1(0,x)-u_2(0,x)|
    \,dx.
\]
The right-hand side is zero.

For general base data, choose a common sequence of \(BV\)-approximations on
the base of the cone. The corresponding \(BV\) local solutions agree by the
first part, and local stability permits passage to the limit.
\end{proof}

\subsection{Patching and continuation}
\label{subsec:general-patching}

The preceding propositions show that the local solvers form a compatible
family on
\[
    (0,\infty)
    \times
    \bigl(\mathbb R^d\setminus\Gamma_p\bigr).
\]
To organize the construction, set
\begin{equation}
\label{eq:general-compact-exhaustion}
    E_n
    :=
    \left\{
        x\in\mathbb R^d:
        |x|\le n,
        \quad
        \operatorname{dist}(x,\Gamma_p)\ge\frac1n
    \right\}.
\end{equation}
Then
\[
    E_n\Subset\operatorname{int}E_{n+1},
    \qquad
    \bigcup_{n=1}^\infty E_n
    =
    \mathbb R^d\setminus\Gamma_p.
\]

For each \(n\), choose finitely many pairs of chart neighborhoods
\[
    V_{n,j}\Subset U_{n,j},
    \qquad
    j=1,\ldots,J_n,
\]
such that the \(V_{n,j}\) cover \(E_n\), while the local solvers are
constructed in the larger sets \(U_{n,j}\). After decreasing the cone
heights, there exists \(h_n>0\) such that, for every
\(0\le t\le h_n\), the spatial sections of the corresponding local cones
still cover the sets \(V_{n,j}\).

Starting from \(u_0\), construct the local solutions in all charts. By
Proposition~\ref{prop:general-overlap-compatibility}, they agree on overlaps
and hence define a solution locally in space and time on
\[
    \mathbb R^d\setminus\Gamma_p.
\]
Strong \(L^1_{\mathrm{loc}}\)-continuity of the local solution operators
permits restarting at any positive time. The semigroup property and overlap
compatibility show that the continuation is independent of the chosen
subdivision of the time interval.

Applying this construction on the compact exhaustion and using a diagonal
argument gives a function
\begin{equation}
\label{eq:general-solution-away-singular-set}
    u
    \quad\text{on}\quad
    (0,\infty)
    \times
    \bigl(\mathbb R^d\setminus\Gamma_p\bigr).
\end{equation}
The construction is independent of the atlas, the order in which the charts
are used, the auxiliary global extensions, and the restarting times. This is
the same compact-exhaustion and cone-patching argument as in
\cite[Section~2]{Mitrovic2025Nondegenerate}; the only change is the local
flat-interface solver.

\begin{remark}[Strong and closed interface admissibility]
\label{rem:general-strong-versus-closed}
When the datum at the base of a cone is \(BV\), the local solution has strong
projected traces satisfying
\eqref{eq:general-intrinsic-germ-condition}. At a later restarting time, the
new datum need not be \(BV\). Continuation is therefore made with the
\(L^1_{\mathrm{loc}}\)-closure of the local flat-interface solution
operator.

This formulation does not require an unproved stability statement for strong
projected traces under arbitrary \(L^1_{\mathrm{loc}}\)-closure. Any local
solution which already possesses strong normalized projected traces satisfying
\eqref{eq:general-intrinsic-germ-condition} is unique by the local Kato
inequality and therefore agrees with the corresponding closed local
solution.
\end{remark}

\subsection{Removal of the interaction set}
\label{subsec:general-removal}

The condition
\[
    \mathcal H^{d-1}(\Gamma_p)=0
\]
allows the singular interaction set to be removed from the weak and Kato
formulations.

\begin{lemma}[A \(W^{1,1}\)-cutoff around \(\Gamma_p\)]
\label{lem:general-small-set-cutoff}
Let \(K_0\Subset\mathbb R^d\). For every \(\delta>0\), there exists
\[
    \omega_\delta\in C^\infty(\mathbb R^d)
\]
such that
\[
    0\le\omega_\delta\le1,
\]
\(\omega_\delta=0\) in a neighborhood of
\(\Gamma_p\cap K_0\), and \(\omega_\delta=1\) outside a slightly larger
neighborhood, with
\begin{equation}
\label{eq:general-small-set-cutoff-estimate}
\left|
    \left\{
        x\in K_0:
        \omega_\delta(x)\neq1
    \right\}
\right|
+
\int_{K_0}
    |\nabla\omega_\delta(x)|
\,dx
\le\delta.
\end{equation}
\end{lemma}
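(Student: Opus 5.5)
The construction uses the definition of Hausdorff measure together with compactness of \(\Gamma_p\cap K_0\). Since \(\Gamma_p\) is closed, the set \(\Gamma_p\cap\overline{K_0}\) is compact. Because \(\mathcal H^{d-1}(\Gamma_p)=0\), for every \(\eta>0\) there is a countable cover of \(\Gamma_p\cap\overline{K_0}\) by open balls \(B_{r_i}(x_i)\) with
\[
    \sum_i r_i^{d-1}<\eta,
    \qquad
    r_i<\eta .
\]
The radii can be slightly enlarged and the balls taken open, so compactness reduces the cover to finitely many balls \(B_{r_i}(x_i)\), \(i=1,\ldots,M\).

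Fix \(\psi\in C_c^\infty(B_2(0))\) with \(0\le\psi\le1\) and \(\psi=1\) on \(B_1(0)\). Define
\[
    \psi_i(x):=\psi\bigl((x-x_i)/r_i\bigr),
    \qquad
    \omega:=\prod_{i=1}^M(1-\psi_i).
\]
Then \(\omega\) is smooth and \(0\le\omega\le1\). It vanishes on \(\bigcup_i B_{r_i}(x_i)\), which is a neighborhood of \(\Gamma_p\cap K_0\). It equals \(1\) outside \(\bigcup_i B_{2r_i}(x_i)\), which is the slightly larger neighborhood.

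For the estimate, the set \(\{\omega\ne1\}\) is contained in \(\bigcup_i B_{2r_i}(x_i)\). Hence
\[
    \bigl|\{x\in K_0:\omega(x)\neq1\}\bigr|
    \le
    C_d\sum_i r_i^d
    \le
    C_d\,\eta\sum_i r_i^{d-1}
    \le
    C_d\,\eta^2 .
\]
For the gradient, the product rule and \(0\le1-\psi_j\le1\) give
\(|\nabla\omega|\le\sum_i|\nabla\psi_i|\). Each term satisfies
\(|\nabla\psi_i|\le\|\nabla\psi\|_\infty/r_i\) and is supported in \(B_{2r_i}(x_i)\). Therefore
\[
    \int_{K_0}|\nabla\omega|\,dx
    \le
    \sum_i C_d\|\nabla\psi\|_\infty\,r_i^{-1}r_i^{d}
    =
    C\sum_i r_i^{d-1}
    \le
    C\eta .
\]
Choosing \(\eta\) small relative to \(\delta\) yields \eqref{eq:general-small-set-cutoff-estimate}, and we set \(\omega_\delta:=\omega\).

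The key step is the gradient bound, where the exponent \(d-1\) matches exactly the Hausdorff dimension hypothesis. Scaling the cutoffs by the radii is what makes each gradient contribute \(r_i^{d-1}\). The remaining points are routine: a closed set meets a compact set in a compact set, and the covering balls may be taken open to allow the finite reduction.
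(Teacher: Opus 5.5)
Your proposal is correct and follows the paper's proof essentially step by step: a finite cover of \(\Gamma_p\cap K_0\) by balls with \(r_i<\eta\) and \(\sum_i r_i^{d-1}<\eta\), rescaled bump functions equal to one on \(B(x_i,r_i)\), supported in \(B(x_i,2r_i)\), with gradients \(O(1/r_i)\), the product \(\prod_i(1-\theta_i)\), and the same volume and gradient estimates. You also spell out two details the paper leaves implicit: the compactness reduction (using \(\overline{K_0}\)) and the product-rule bound \(|\nabla\omega|\le\sum_i|\nabla\psi_i|\).
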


\begin{proof}
For every \(\eta>0\), the compact set
\(\Gamma_p\cap K_0\) can be covered by finitely many balls
\[
    B(x_i,r_i),
    \qquad
    r_i<\eta,
\]
such that
\[
    \sum_i r_i^{d-1}<\eta.
\]
Choose
\[
    \theta_i\in C_c^\infty(B(x_i,2r_i)),
\]
equal to one on \(B(x_i,r_i)\), with
\[
    |\nabla\theta_i|\le \frac{C}{r_i}.
\]
Set
\[
    \omega_\delta
    :=
    \prod_i(1-\theta_i).
\]
Then
\[
    \int_{\mathbb R^d}
        |\nabla\omega_\delta|
    \,dx
    \le
    C\sum_i r_i^{d-1},
\]
while
\[
    |\{\omega_\delta\neq1\}|
    \le
    C\sum_i r_i^d
    \le
    C\eta\sum_i r_i^{d-1}.
\]
Choosing \(\eta\) sufficiently small proves
\eqref{eq:general-small-set-cutoff-estimate}.
\end{proof}

\begin{proposition}[Removability of \(\Gamma_p\)]
\label{prop:general-removability}
Let \(u\) be the solution constructed on
\[
    (0,\infty)
    \times
    \bigl(\mathbb R^d\setminus\Gamma_p\bigr).
\]
After redefining \(u\) arbitrarily on
\((0,\infty)\times\Gamma_p\), it satisfies the weak formulation of
\eqref{eq:general-main} on all of
\((0,\infty)\times\mathbb R^d\).

The same cutoff argument removes \(\Gamma_p\) from the global Kato
inequality for two locally admissible solutions.
\end{proposition}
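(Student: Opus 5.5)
The plan is to test the weak formulation on \((0,\infty)\times(\mathbb R^d\setminus\Gamma_p)\) with \(\varphi\omega_\delta\), where \(\omega_\delta\) is the cutoff of Lemma~\ref{lem:general-small-set-cutoff}, and then let \(\delta\downarrow0\). Fix \(\varphi\in C_c^\infty([0,\infty)\times\mathbb R^d)\) and choose a compact \(K_0\) containing a neighbourhood of the spatial projection of \(\operatorname{supp}\varphi\). Since \(\omega_\delta\) vanishes near \(\Gamma_p\cap K_0\), the function \(\varphi\omega_\delta\) is compactly supported in \([0,\infty)\times(\mathbb R^d\setminus\Gamma_p)\). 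Cover that support by finitely many chart neighbourhoods from the compact exhaustion \(E_n\). A partition of unity subordinate to these charts, combined with the local weak formulations, then gives
\[
\int_0^\infty\!\!\int_{\mathbb R^d}\bigl(u\,\partial_t(\varphi\omega_\delta)+\mathfrak f(x,u)\cdot\nabla_x(\varphi\omega_\delta)\bigr)dx\,dt+\int_{\mathbb R^d}u_0\varphi(0,\cdot)\omega_\delta\,dx=0 .
\]
The local weak formulations come from Proposition~\ref{prop:general-local-solver} in interface charts, where the pulled-back flat solution satisfies \eqref{eq:flat-weak-formulation} because the flattening has unit Jacobian. In smooth charts they come from the Kruzhkov solver. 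Integrals over \(\Gamma_p\) are irrelevant: \(\mathcal H^{d-1}(\Gamma_p)=0\) forces \(|\Gamma_p|=0\), so redefining \(u\) there changes nothing.

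Next we expand \(\nabla_x(\varphi\omega_\delta)=\omega_\delta\nabla_x\varphi+\varphi\nabla_x\omega_\delta\) and pass to the limit term by term.
\begin{itemize}
\item The terms carrying \(\omega_\delta\) converge to the corresponding terms without the cutoff. This follows from dominated convergence together with \(|\{\omega_\delta\ne1\}\cap K_0|\le\delta\) and the bounds \(|u|\le a\) and \(\sup_{K_0\times K}|\mathfrak f|<\infty\). The flux bound holds because the branches are \(C^2\) on charts and only finitely many charts meet \(K_0\); this local finiteness near \(\Gamma_p\) has to be justified from the hypotheses.
\item The remaining term is bounded by
\[
T\|\varphi\|_\infty\sup_{K_0\times K}|\mathfrak f|\int_{K_0}|\nabla\omega_\delta|\,dx\le C\delta ,
\]
so it vanishes as \(\delta\downarrow0\). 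This gives the weak formulation on all of \((0,\infty)\times\mathbb R^d\).
\end{itemize}

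For the Kato inequality we take two locally admissible solutions \(u,v\). Away from \(\Gamma_p\), the local doubled Kato inequalities hold, with nonpositive interface contributions as in Proposition~\ref{prop:general-overlap-compatibility} and the Lipschitz source bound \eqref{eq:general-divergence-lipschitz}. Testing with \(\psi\omega_\delta\) for \(\psi\ge0\), \(\psi\in C_c^\infty\), produces the extra term
\[
\int\!\!\int\operatorname{sgn}(u-v)\bigl(\mathfrak f(x,u)-\mathfrak f(x,v)\bigr)\cdot\nabla\omega_\delta\,\psi ,
\]
which is \(O(\delta)\) by the same estimate. The source and time terms again converge by dominated convergence, so the global Kato inequality holds on \(\mathbb R^d\).

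The main obstacle is the uniform flux bound on \(K_0\) near \(\Gamma_p\). Since the charts accumulate at \(\Gamma_p\), local finiteness away from \(\Gamma_p\) does not bound \(\mathfrak f\) uniformly. I would first try to use that \(\mathfrak f\) is a globally defined flux that is bounded on compact sets times \(K\); this seems implicit, since otherwise the weak formulation on \(\mathbb R^d\) is meaningless. Failing that, I would add it as a standing assumption. A smaller point is the Kato source term \(\operatorname{sgn}(u-v)\operatorname{div}\) with Lipschitz constant \(\Lambda_Q\): it must be locally integrable near \(\Gamma_p\), which needs the analogous uniform bound on the divergences.
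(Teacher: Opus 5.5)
Your proposal is correct and follows the paper's proof. Both test the weak formulation valid away from \(\Gamma_p\) with \(\varphi\omega_\delta\), pass to the limit in the terms carrying \(\omega_\delta\), use \(\int_{K_0}|\nabla\omega_\delta|\,dx\le\delta\) to kill the \(\nabla\omega_\delta\) term, and then repeat the argument for the Kato inequality. The paper also uses the local boundedness of \(\mathfrak f\) on \(K_0\times K\) that you single out (together with the bound on the divergence Lipschitz constants needed for the Kato source term), but only tacitly, inside its constant \(C\) and the ``local structural bounds'' of Theorem~\ref{thm:general-interface}; making it an explicit standing assumption is the right call.
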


\begin{proof}
Let
\[
    \varphi
    \in
    C_c^\infty
    \bigl([0,\infty)\times\mathbb R^d\bigr),
\]
and let \(K_0\) contain the spatial projection of its support. Use
\[
    \varphi(t,x)\omega_\delta(x)
\]
as a test function in the weak formulation valid away from \(\Gamma_p\).
All terms not containing \(\nabla\omega_\delta\) converge by dominated
convergence. The remaining term is bounded by
\[
    C\|\varphi\|_{L^\infty}
    \int_{K_0}
        |\nabla\omega_\delta(x)|
    \,dx,
\]
and therefore tends to zero.

For the Kato inequality, the additional term is
\[
    \operatorname{sgn}(u-v)
    \bigl(
        \mathfrak f(x,u)-\mathfrak f(x,v)
    \bigr)
    \cdot\nabla\omega_\delta,
\]
which is estimated in the same way.
\end{proof}

\subsection{Admissible solutions}
\label{subsec:general-admissible-solutions}

\begin{definition}[Admissible solution for a general interface family]
\label{def:general-admissible-solution}
A function
\[
    u
    \in
    L^\infty
    \bigl((0,\infty)\times\mathbb R^d;K\bigr)
    \cap
    C
    \bigl([0,\infty);L^1_{\mathrm{loc}}(\mathbb R^d)\bigr)
\]
is called an admissible solution of
\eqref{eq:general-main} if the following conditions hold.

\smallskip
\noindent
\emph{(i) Weak formulation.}
The equation holds in
\[
    \mathcal D'
    \bigl((0,\infty)\times\mathbb R^d\bigr),
\]
and
\[
    u(0,\cdot)=u_0
    \quad\text{in }L^1_{\mathrm{loc}}(\mathbb R^d).
\]

\smallskip
\noindent
\emph{(ii) Entropy inequalities away from the interfaces.}
On every open set compactly contained in
\(\mathbb R^d\setminus\Gamma\), the function \(u\) satisfies the
heterogeneous Kruzhkov entropy inequalities for the corresponding smooth
flux branch.

\smallskip
\noindent
\emph{(iii) Local interface admissibility.}
For every
\[
    \tau\ge0,
    \qquad
    x_0\in\Gamma_{\mathrm{reg}},
\]
the pullback of \(u\) to a flattening chart around \(x_0\) agrees, in a
sufficiently small forward cone based at time \(\tau\), with the closed local
flat-interface solution operator started from the local datum
\(u(\tau,\cdot)\).

No additional condition is imposed on \(\Gamma_p\).
\end{definition}

\begin{remark}
By local \(L^1\)-stability, condition \emph{(iii)} is equivalent to local
approximability by flat-interface solutions with \(BV\) base data. If the
quotient variables of \(u\) possess strong one-sided traces, then
condition \emph{(iii)} is also equivalent to
\[
    \bigl(
        r_\Gamma^-(t,x),
        r_\Gamma^+(t,x)
    \bigr)
    \in
    \mathcal G_{\mathrm{vv}}^\nu(x)
\]
for a.e.
\((t,x)\in(0,\infty)\times\Gamma_{\mathrm{reg}}\).
\end{remark}

\subsection{The general-interface theorem}
\label{subsec:general-main-theorem}

\begin{theorem}[Well-posedness for general interface geometry]
\label{thm:general-interface}
Assume
\eqref{eq:general-interface-decomposition}--%
\eqref{eq:general-local-graph},
the local two-branch representation
\eqref{eq:general-local-flux},
the regularity and invariant-region conditions
\eqref{eq:general-branch-regularity}--%
\eqref{eq:general-invariant},
and
\[
    \mathcal H^{d-1}(\Gamma_p)=0.
\]
Assume moreover that the interface and smooth-flux charts are locally finite
on \(\mathbb R^d\setminus\Gamma_p\), and that the local branch
representations agree on overlaps, up to interchange of the two sides.

Then, for every
\[
    u_0\in BV(\mathbb R^d;K),
\]
there exists a unique admissible solution of
\eqref{eq:general-main} in the sense of
Definition~\ref{def:general-admissible-solution}.

If \(u\) and \(v\) are admissible solutions with initial data \(u_0\) and
\(v_0\), then, for every \(T,R>0\), there exist
\[
    \overline R>R,
    \qquad
    C_{T,R}>0,
\]
depending only on the local structural bounds in the corresponding domain of
dependence, such that
\begin{equation}
\label{eq:general-interface-stability}
\operatorname*{ess\,sup}_{t\in(0,T)}
\int_{B_R}
    |u(t,x)-v(t,x)|
\,dx
\le
C_{T,R}
\int_{B_{\overline R}}
    |u_0(x)-v_0(x)|
\,dx.
\end{equation}
\end{theorem}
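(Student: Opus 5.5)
The plan is to assemble the theorem from the local pieces already established: existence by the patching construction, admissibility from the local solver properties, and uniqueness/stability from a global Kato inequality obtained by localizing near each regular interface point and removing \(\Gamma_p\).

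\textbf{Existence.} Take the function \(u\) of \eqref{eq:general-solution-away-singular-set} produced by the compact exhaustion \(E_n\) and the cone patching of Section~\ref{subsec:general-patching}, with initial datum \(u_0\in BV(\mathbb R^d;K)\). The verification of Definition~\ref{def:general-admissible-solution} then splits as follows.
\begin{enumerate}
\item \(u\) takes values in \(K\), since each local flat solver preserves \(K\) by \eqref{eq:general-invariant}.
\item \(u\in C([0,\infty);L^1_{\mathrm{loc}})\), by strong continuity of the local operators and the semigroup property \eqref{eq:general-flat-semigroup-property}.
\item The weak formulation holds on \((0,\infty)\times(\mathbb R^d\setminus\Gamma_p)\) because it holds chartwise (pull back by \(X\), Jacobian one). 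It extends to all of \((0,\infty)\times\mathbb R^d\) by Proposition~\ref{prop:general-removability}.
\item The entropy inequalities (ii) are local statements away from \(\Gamma\), inherited from the smooth-flux charts.
\item Condition (iii) holds essentially by construction. Restarting at any \(\tau\) is legitimate by the semigroup property. Overlap compatibility (Proposition~\ref{prop:general-overlap-compatibility}) shows that the patched function coincides, in every small forward cone based at \(\tau\), with the closed local solver started from \(u(\tau,\cdot)\).
\end{enumerate}

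\textbf{Uniqueness and stability.} Let \(u,v\) be admissible with data \(u_0,v_0\). Fix \(T,R\) and test the doubled Kato inequality with \(\psi(t,x)\omega_\delta(x)\), where \(\psi\) is a shrinking-cone cutoff of speed \(L\) equal to the maximal state-derivative bound of the fluxes on \(B_{\overline R}\times K\). The interior contributions are handled in three pieces.
\begin{enumerate}
\item Away from \(\Gamma\), the heterogeneous Kruzhkov doubling gives the standard inequality. The source term \(\operatorname{sgn}(u-v)(\operatorname{div}f(x,u)-\operatorname{div}f(x,v))\) is bounded by \(\Lambda|u-v|\) using \eqref{eq:general-divergence-lipschitz}.
\item Near each \(x_0\in\Gamma_{\mathrm{reg}}\), partition \(\operatorname{supp}(\psi\omega_\delta)\) by a finite partition of unity subordinate to the charts covering the support away from \(\Gamma_p\). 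In each interface chart, condition (iii) identifies both \(u\) and \(v\), on small cones, with closed local flat-interface solutions. Local flat-interface stability (Theorem~\ref{thm:flat-wellposedness} together with Corollary~\ref{cor:flat-extension-l1}, via the chart-independence of Proposition~\ref{prop:general-local-solver}) gives the localized \(L^1\) contraction, with the interface term of the correct sign by Lemma~\ref{lem:general-chart-covariance}.
\item The \(\nabla\omega_\delta\) term vanishes as \(\delta\to0\) by Lemma~\ref{lem:general-small-set-cutoff}.
\end{enumerate}
Summing these contributions, or iterating local cone estimates over short time steps \(h\), and applying Gronwall yields \eqref{eq:general-interface-stability} with \(\overline R=R+LT\) plus a margin, and \(C_{T,R}=e^{\Lambda T}\). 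Taking \(u_0=v_0\) gives uniqueness.

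\textbf{Main obstacle.} The hard step is the interface part of uniqueness for general admissible solutions. Condition (iii) does not give strong traces of \(u\) itself, so one cannot write the interface Kato term directly. The first approach I would try avoids traces altogether and works at the level of local solution operators. On a short time interval \([\tau,\tau+h]\), both \(u\) and \(v\) agree on small cones with \(S^{c,R}_{t-\tau}\) applied to the extended data \(u(\tau)_c\) and \(v(\tau)_c\). The flat estimate \eqref{eq:flat-final-stability} for the closed operator then gives a localized contraction \(\int_{B_{r-Lh}}|u-v|(\tau+h)\le C\int_{B_r}|u-v|(\tau)\) with \(C=1+O(h)\).

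The delicate bookkeeping is to cover the compact set \(E_n\cap B_{R}\) by finitely many cones and iterate this estimate without the constants compounding badly. I would handle this by using the genuine Kato inequality in the interior of each cone, so that the constant is \(e^{\Lambda h}\) rather than an arbitrary \(C\), and by controlling the part near \(\Gamma_p\) through the \(\omega_\delta\) cutoff, whose \(L^1\) contribution is small.
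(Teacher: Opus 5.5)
Your existence argument matches the paper's: patch the local solvers, extend across $\Gamma_p$ with Proposition~\ref{prop:general-removability}, and obtain admissibility by construction. The gap is in uniqueness at regular interface points.

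The final cutoff-and-Gronwall step needs a \emph{distributional} Kato inequality
\[
\partial_t|u-v|+\operatorname{div}_x\bigl[\operatorname{sgn}(u-v)\bigl(\mathfrak f(x,u)-\mathfrak f(x,v)\bigr)\bigr]\le\Lambda|u-v|
\]
in $\mathcal D'$ of a space--time neighbourhood of each interface point. Only an inequality of this kind can be summed over a partition of unity and then tested with $\psi\omega_\delta$. Your interface-chart bullet and your ``main obstacle'' route instead produce \emph{integrated} cone contractions $\int_{B_{r-Lh}}|u-v|(\tau+h)\le C\int_{B_r}|u-v|(\tau)$, and iterating these does not close:
\begin{enumerate}
\item Summing them over a finite cover of $B_R$ costs the overlap multiplicity $M$ of the cover at every step, so the constant grows like $M^{T/h}$.
\item Averaging over centres lowers this to roughly $e^{dLT/r}$. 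That is still unbounded, because the admissible radii $r$ and heights $h$ shrink to zero as the charts approach $\Gamma_p$.
\item Near $\Gamma_p$ you have no cone estimate at all. The cutoff of Lemma~\ref{lem:general-small-set-cutoff} cannot be inserted into an iteration of integral estimates. It only helps once a distributional inequality holds on all of $(0,\infty)\times(\mathbb R^d\setminus\Gamma_p)$, so that the single extra term involving $\nabla\omega_\delta$ is $O(\delta)$.
\item Also, \eqref{eq:flat-final-stability} does not supply the per-step form $C=1+O(h)$, $\overline R=r+Lh$ that you assume.
\end{enumerate}
Your fallback, ``use the genuine Kato inequality in the interior of each cone,'' presupposes exactly the statement that is missing at the interface. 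As you note yourself, $u$ and $v$ have no traces there.

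The missing idea is that the distributional Kato inequality, unlike the trace formulation, is stable under strong $L^1_{\mathrm{loc}}$ convergence. The argument runs as follows.
\begin{enumerate}
\item Fix a base time $\tau$ and a chart. Approximate $u(\tau,\cdot)$ and $v(\tau,\cdot)$ by $BV$ data and run the extended flat-interface solver on both.
\item These approximants have strong quotient traces in the projected germ. Hence their doubled Kato inequality holds across $\{y_1=0\}$, with a nonpositive interface measure by $L^1$-dissipativity of $\mathcal G_{\mathrm{vv}}$.
\item Every term is Lipschitz in $(u,v)$. Condition (iii) and local $L^1$-stability therefore let you pass to the limit in $\mathcal D'$ of the cone, and no trace of $u$ or $v$ is ever needed.
\item Since $\tau$ is arbitrary, these cones cover $(0,\infty)\times(\mathbb R^d\setminus\Gamma_p)$. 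A partition of unity gives the inequality there, and $\omega_\delta$ removes $\Gamma_p$.
\item A single finite-speed cutoff with Gronwall then yields \eqref{eq:general-interface-stability}, with no time-stepping.
\end{enumerate}
This is what the paper means by ``the inequality passes to the closure by local $L^1$-stability.''
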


\begin{proof}
The local interface and smooth-flux solution operators are provided by
Proposition~\ref{prop:general-local-solver} and the ordinary heterogeneous
Kruzhkov theory. Proposition~\ref{prop:general-overlap-compatibility}
shows that they agree on overlaps. The patching and continuation argument of
Subsection~\ref{subsec:general-patching} therefore gives a solution on
\[
    (0,\infty)
    \times
    \bigl(\mathbb R^d\setminus\Gamma_p\bigr).
\]
Proposition~\ref{prop:general-removability} extends the weak formulation
across \(\Gamma_p\), and hence produces an admissible solution on the whole
space.

For uniqueness, apply the local Kato inequality in every regular interface
chart and every smooth-flux chart. At regular interface points, the interface
contribution is nonpositive because both local solutions belong to the same
closed local solver; for \(BV\) approximations this follows from the maximal
\(L^1\)-dissipativity of the intrinsic projected germ, and the inequality
passes to the closure by local \(L^1\)-stability.

The local inequalities patch to a global Kato inequality away from
\(\Gamma_p\). Lemma~\ref{lem:general-small-set-cutoff} removes
\(\Gamma_p\). Standard finite-speed cutoffs and Gronwall's inequality give
\eqref{eq:general-interface-stability}. Equal initial data imply \(u=v\).
\end{proof}

\begin{corollary}[Extension beyond \(BV\) initial data]
\label{cor:general-extension-l1}
The solution map extends uniquely by local \(L^1\)-continuity to every
\[
    u_0
    \in
    L^1_{\mathrm{loc}}(\mathbb R^d)
    \cap
    L^\infty(\mathbb R^d),
    \qquad
    u_0(x)\in K
    \quad\text{for a.e. }x.
\]
The extended solution is the strong \(L^1_{\mathrm{loc}}\)-limit of the
solutions corresponding to \(BV(\mathbb R^d;K)\)-approximations of \(u_0\),
is independent of the approximating sequence, and satisfies
\eqref{eq:general-interface-stability}.
\end{corollary}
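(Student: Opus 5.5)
The plan is to copy the proof of Corollary~\ref{cor:flat-extension-l1}, using the global stability estimate \eqref{eq:general-interface-stability} of Theorem~\ref{thm:general-interface} in place of \eqref{eq:flat-final-stability}.

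\emph{Approximation.} Given \(u_0\in L^1_{\mathrm{loc}}\cap L^\infty\) with values in \(K\), I would first produce \(u_0^n\in BV(\mathbb R^d;K)\) with \(u_0^n\to u_0\) in \(L^1_{\mathrm{loc}}\). Mollification gives smooth functions with values in \(K\), since \(K\) is convex, and they are locally \(BV\). To obtain global \(BV\), I would multiply by a cutoff equal to one on \(B_n\) and combine it with a constant \(k_*\in K\), as in \eqref{eq:general-bv-extension}. The truncation \(T_K\) keeps the values in \(K\).

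\emph{Cauchy property.} Let \(u^n\) be the admissible solutions given by Theorem~\ref{thm:general-interface}. For fixed \(T,R\), estimate \eqref{eq:general-interface-stability} gives
\[
\operatorname*{ess\,sup}_{t\in(0,T)}\int_{B_R}|u^n-u^m|\,dx\le C_{T,R}\int_{B_{\overline R}}|u_0^n-u_0^m|\,dx .
\]
The right-hand side tends to zero, so \((u^n)\) is Cauchy in \(L^\infty((0,T);L^1(B_R))\). Exhausting by \(T,R\to\infty\) and a diagonal argument gives a limit \(u\) in \(L^\infty_{\mathrm{loc}}([0,\infty);L^1_{\mathrm{loc}})\). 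Each \(u^n\) lies in \(C([0,\infty);L^1_{\mathrm{loc}})\), and the convergence is uniform in \(t\) on compact intervals. Hence \(u\) is continuous with values in \(L^1_{\mathrm{loc}}\), and \(u(0)=u_0\).

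\emph{Independence and stability.} Interlacing two approximating sequences gives a single sequence that is again Cauchy, so both sequences have the same limit. For two limits \(u,v\) with data \(u_0,v_0\), I would pass to the limit in \eqref{eq:general-interface-stability} written for \(u^n,v^n\). This uses lower semicontinuity of the left-hand side under \(L^1_{\mathrm{loc}}\) convergence, together with the fact that \(C_{T,R}\) and \(\overline R\) depend only on the structural bounds and not on the data. That yields \eqref{eq:general-interface-stability} for the extended map. Uniqueness of the continuous extension follows because \(BV\) data are \(L^1_{\mathrm{loc}}\)-dense in the admissible class. If some \(u_0\) is itself \(BV\), the constant sequence shows that the extension agrees with the original solution map.

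\emph{Expected obstacle.} The delicate point is whether \(C_{T,R}\) and \(\overline R\) are truly uniform over all \(K\)-valued data. This requires the finite-speed cutoffs and Gronwall constants to depend only on the flux bounds on \(K\), which Theorem~\ref{thm:general-interface} asserts. A second question is what the limit is a solution of. I would state admissibility in the closure sense, as in Corollary~\ref{cor:flat-extension-l1}. The weak formulation and the off-interface entropy inequalities pass to the limit directly. Condition (iii) holds because the closed local solvers are \(L^1_{\mathrm{loc}}\)-continuous, consistent with Remark~\ref{rem:general-strong-versus-closed}.
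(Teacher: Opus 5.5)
Your proposal is correct and follows the paper's route: take \(BV(\mathbb R^d;K)\)-approximations of \(u_0\), use \eqref{eq:general-interface-stability} to show the corresponding solutions are Cauchy in \(L^\infty_{\mathrm{loc}}([0,\infty);L^1_{\mathrm{loc}}(\mathbb R^d))\), and use the same estimate to get independence of the approximating sequence and stability of the limit. Your extra details (explicit construction of the approximants, time continuity of the limit, the interlacing argument, and the remark on closure-sense admissibility) only fill in what the paper's three-line proof leaves implicit.
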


\begin{proof}
Choose
\[
    u_0^n\in BV(\mathbb R^d;K),
    \qquad
    u_0^n\longrightarrow u_0
    \quad\text{in }L^1_{\mathrm{loc}}(\mathbb R^d),
\]
and let \(u^n\) be the corresponding admissible solutions.
Estimate~\eqref{eq:general-interface-stability} shows that \((u^n)\) is
Cauchy in
\[
    L^\infty_{\mathrm{loc}}
    \bigl(
        [0,\infty);
        L^1_{\mathrm{loc}}(\mathbb R^d)
    \bigr).
\]
The same estimate proves independence of the approximation and stability of
the limit.
\end{proof}

    \section*{Acknowledgements}

I would like to express my sincere gratitude to Professor Kenneth H. Karlsen,
who introduced me to the problem of scalar conservation laws with
discontinuous flux during my postdoctoral stay in Trondheim nearly twenty
years ago. I am grateful for the many discussions, suggestions, and insights
he has shared with me over the years, as well as for his extraordinary
patience and generosity.

 \end{document}